\documentclass[reqno]{amsart}
\usepackage{setspace,tikz,xcolor,mathrsfs,listings,multicol}
\usepackage{amssymb}
\usepackage{rotating}
\usepackage[vcentermath]{youngtab}
\usepackage{fullpage}
\usepackage{enumerate}
\usepackage[all,cmtip]{xy}
\usetikzlibrary{arrows,matrix}

\usepackage[colorlinks=true, pdfstartview=FitV, linkcolor=blue, citecolor=blue, urlcolor=blue]{hyperref}


\newcommand{\g}{\mathfrak{g}}
\newcommand{\hh}{\mathfrak{h}}

\newcommand{\inner}[2]{\left\langle #1, #2 \right\rangle}
\newcommand{\iso}{\cong}

\newcommand{\sgn}{\operatorname{sgn}}

\newcommand{\rootarrow}[1]{\xrightarrow[\hspace{30pt}]{#1}}

\newcommand{\pr}{\mathrm{pr}}
\newcommand{\inc}{\mathrm{in}}

\newcommand{\Al}{\mathcal{A}} 
\newcommand{\h}[1] { h^{J}_{#1} } 
\newcommand{\IndSet}{\mathcal{K}}  
\newcommand{\ind}{k}  

\DeclareMathOperator{\wt}{wt} 

\newcommand{\ZZ}{\mathbb{Z}}
\newcommand{\QQ}{\mathbb{Q}}
\newcommand{\RR}{\mathbb{R}}

\usepackage{listings}
\lstdefinelanguage{Sage}[]{Python}
{morekeywords={False,sage,True},sensitive=true}
\lstset{
  frame=single,
  showtabs=False,
  showspaces=False,
  showstringspaces=False,
  commentstyle={\ttfamily\color{dgreencolor}},
  keywordstyle={\ttfamily\color{dbluecolor}\bfseries},
  stringstyle={\ttfamily\color{dgraycolor}\bfseries},
  language=Sage,
  basicstyle={\footnotesize\ttfamily},
  aboveskip=0.75em,
  belowskip=0.75em,
  xleftmargin=.15in,
}
\definecolor{dblackcolor}{rgb}{0.0,0.0,0.0}
\definecolor{dbluecolor}{rgb}{0.01,0.02,0.7}
\definecolor{dgreencolor}{rgb}{0.2,0.4,0.0}
\definecolor{dgraycolor}{rgb}{0.30,0.3,0.30}

\usepackage{xparse}

\makeatletter
\protected\def\specialmergetwolists{%
  \begingroup
  \@ifstar{\def\cnta{1}\@specialmergetwolists}
    {\def\cnta{0}\@specialmergetwolists}%
}
\def\@specialmergetwolists#1#2#3#4{%
  \def\tempa##1##2{%
    \edef##2{%
      \ifnum\cnta=\@ne\else\expandafter\@firstoftwo\fi
      \unexpanded\expandafter{##1}%
    }%
  }%
  \tempa{#2}\tempb\tempa{#3}\tempa
  \def\cnta{0}\def#4{}%
  \foreach \x in \tempb{%
    \xdef\cnta{\the\numexpr\cnta+1}%
    \gdef\cntb{0}%
    \foreach \y in \tempa{%
      \xdef\cntb{\the\numexpr\cntb+1}%
      \ifnum\cntb=\cnta\relax
        \xdef#4{#4\ifx#4\empty\else,\fi\x#1\y}%
        \breakforeach
      \fi
    }%
  }%
  \endgroup
}
\makeatother

\usepackage{xparse}
\DeclareDocumentCommand\rpp{ m m g }{
	\foreach \x [count=\s from 1] in {#1}{
	        {\ifnum\s=1
	                \draw (0,-\s)--(\x,-\s);
	                \fi}
	   \draw (0,-\s-1) to (\x,-\s-1);
	   \foreach \y in {0, ..., \x} {\draw (\y,-\s)--(\y,-\s-1);}
	}
	\specialmergetwolists{/}{#1}{#2}\ziplist
	\foreach \x/\y [count=\yi from 1] in \ziplist{
	    \node[anchor=west,font=\small] at (\x,-\yi - .5) {$\y$};
	}
	\IfValueT {#3}
	{\foreach \z [count=\zi from 1] in {#3} {\node[anchor=east,font=\small] at (0,-\zi - .5) {$\z$};}}
	{}
}

\definecolor{darkred}{rgb}{0.7,0,0} 
\newcommand{\defn}[1]{{\color{darkred}\emph{#1}}} 

\usepackage[colorinlistoftodos]{todonotes}

\theoremstyle{plain}
\newtheorem{thm}{Theorem}[section]
\newtheorem{lemma}[thm]{Lemma}

\newtheorem{prop}[thm]{Proposition}
\newtheorem{cor}[thm]{Corollary}
\theoremstyle{definition}
\newtheorem{dfn}[thm]{Definition}
\newtheorem{ex}[thm]{Example}
\newtheorem{remark}[thm]{Remark}

\numberwithin{equation}{section}

\begin{document}
\title{Alcove path model for $B(\infty)$}

\author{Arthur Lubovsky}
\address[A.~Lubovsky]{Department of Mathematics, University of New York at Albany, Albany, NY 12222}
\email{alubovsky@albany.edu}

\author{Travis Scrimshaw}
\address[T.~Scrimshaw]{School of Mathematics, University of Minnesota, Minneapolis, MN 55455} 
\curraddr{School of Mathematics and Physics, University of Queensland, St. Lucia, QLD 4072, Australia}
\email{tcscrims@gmail.com}
\urladdr{https://sites.google.com/view/tscrim/home}

\keywords{crystal, alcove path, quantum group}
\subjclass[2010]{05E10, 17B37}

\thanks{TS was partially supported by the National Science Foundation RTG grant NSF/DMS-1148634.}

\begin{abstract}
We construct a model for $B(\infty)$ using the alcove path model of Lenart and Postnikov.
We show that the continuous limit of our model recovers a dual version of the Littelmann path model for $B(\infty)$ given by Li and Zhang.
Furthermore, we consider the dual version of the alcove path model and obtain analogous results for the dual model, where the continuous limit gives the Li and Zhang model.
\end{abstract}

\maketitle

\section{Introduction}
\label{sec:introduction}

The theory of Kashiwara's crystal bases~\cite{K90,K91} has been shown to have deep connections with numerous areas of geometry and combinatorics, well-beyond its origin in representation theory and mathematical physics.
A crystal basis is a particularly nice basis for certain representations of a quantum group $U_q(\g)$ in the limit $q \to 0$, or crystal limit.
In particular, for a symmetrizable Kac--Moody algebra $\g$, the integrable highest weight modules $V(\lambda)$, so $\lambda$ is a dominant integral weight, were shown by Kashiwara to admit crystal bases $B(\lambda)$.
Moreover, Kashiwara has shown that the lower half of the quantum group $U_q^-(\g)$ admits a crystal basis $B(\infty)$.

Roughly speaking, the algebraic action of $U_q(\g)$ gets transformed into a combinatorial action on the bases in the $q \to 0$ limit.
While Kashiwara's grand loop argument showed the existence of the crystal bases $B(\lambda)$, it did not give an explicit (combinatorial) description.
Thus the problem was to determine a combinatorial model for $B(\lambda)$.
This was first done for $\g$ of type $A_n$, $B_n$, $C_n$, and $D_n$ in~\cite{KN94} and $G_2$ in~\cite{KM94} by using tableaux.
A uniform model (for all symmetrizable types) for crystals using piecewise-linear paths in the weight space was constructed in~\cite{L95,L95-2}, which is now known as the Littelmann path model.
A special case of the Littelmann path model includes Lakshmibai--Seshadri (LS) paths, where the combinatorial definition was given by Stembridge~\cite{Stembridge02}.

Both of these models arose from examining a particular aspect of the representation theory of $\g$ and the related combinatorics or geometry.
There are numerous (but not necessarily uniform) models for $B(\lambda)$ that have been constructed from geometric objects such as quiver varieties~\cite{KS97,Saito02,Savage05} and MV polytopes~\cite{BKT14,Kamnitzer07,MT14,TW16}.
Another uniform model for crystals came from the study of ($t$-analogs of) $q$-characters~\cite{K03II,Nakajima03II,Nakajima03,Nakajima04}, which is now known as Nakajima monomials.
Additionally, some models for crystals have also arisen from mathematical physics, in particular, solvable lattice models~\cite{KKMMNN91,KKMMNN92} (the Kyoto path model) and Kirillov--Reshetikhin modules~\cite{SalisburyS15,SalisburyS16,SalisburyS16II,SalisburyS15II,S06,SchillingS15} (the rigged configuration model).

Many of these models are known to have extensions to $B(\infty)$.
Some authors have used the direct limit construction of Kashiwara~\cite{K02} to extend a particular crystal model for $B(\lambda)$ to $B(\infty)$. Examples include the tableaux model~\cite{Cliff98,HL08,HL12} and rigged configurations~\cite{SalisburyS15,SalisburyS16,SalisburyS16II,SalisburyS15II}, where the model reflects the naturality of the inclusion of $B(\lambda) \to B(\mu)$ for $\lambda \leq \mu$.
In contrast, other authors have used other characterizations of $B(\infty)$ to construct their extensions, such as the polyhedral realization~\cite{NZ97} (which has a $B(\lambda)$ version~\cite{Hoshino13,Hoshino05,HN05,Nakashima99}), Nakajima monomials~\cite{KKS07}, and Littelmann paths~\cite{LZ11}.

The model we will be focusing on is a discrete version of the Littelmann path model known as the alcove path model that was given for $B(\lambda)$ in~\cite{LP07,LP08}.
The alcove path model in finite types is related to LS galleries and Mirkovi\'c--Vilonen (MV) cycles~\cite{GL05} and the equivariant $K$-theory of the generalized flag variety~\cite{LP07}.
Moreover, the alcove path model can be described in terms of certain saturated chains in the (strong) Bruhat poset.
While the Littelmann path model came first, it is perhaps more proper to consider the Littelmann path model as the continuous limit of the alcove path model.
Moreover, the alcove path model carries with it more information, specifically the order in which the hyperplanes are crossed, allowing a non-recursive description of the elements in full generality.

The primary goal of this paper is to construct a model for $B(\infty)$ using the alcove path model.
Our approach is to use the direct limit construction of Kashiwara restricted to $\{B(k\rho)\}_{k=0}^{\infty}$,\footnote{We omit the weight shifting crystals $T_{-k\rho}$ for simplicity of our exposition in the introduction.} where the inclusions $\psi_{k\rho, k'\rho} \colon B(k\rho) \to B(k'\rho)$, for $k' \geq k$, are easy to compute.
In order to do so, we define the concatenation of a $\lambda$-chain and a $\mu$-chain (see Definition~\ref{def:concatenation}).
We then complete our proof by using the fact that for every $b \in B(\infty)$, there exists a $k \gg 1$ such that $b$ and $f_i b$, for all $i$, is not in the kernel of the natural projection onto $B(k\rho)$.
Next, the continuous limit of the alcove path model for $B(\lambda)$ to the Littelmann path model for $B(-\lambda)$ is given explicitly by~\cite[Thm.~9.4]{LP08} as a ``dual'' crystal isomorphism $\varpi_{\lambda}$.
We extend $\varpi_{\lambda}$ to an explicit crystal isomorphism beween the alcove path model and Littelmann path model for $B(\infty)$.

One of the strengths of the alcove path model for $B(\lambda)$ is that the elements in the crystal are given non-recursively; in particular, they are not constructed by applying the crystal operators to the highest weight element.
We retain the notion of an admissible sequence when we consider $B(\infty)$.
Thus, the check whether an element is in $B(\infty)$ is a matter of checking if the foldings in an alcove walk correspond to a saturated chain in the Bruhat order.
Hence, we obtain the first model for $B(\infty)$ that has a non-recursive description of its elements in all symmetrizable types.
Previously, if the model had a non-recursive definition, it was either type-specific (for example~\cite{Cliff98,HL08,Kamnitzer07,KKM94}) or described as the closure under the crystal operators (for example~\cite{KKS07,LZ11,SalisburyS15,SalisburyS15II}).

In order to construct the continuous limit of the alcove path model for $B(\infty)$ in analogy to~\cite[Thm.~9.4]{LP07}, we need to construct a Littelmann path model for the contragrediant dual of $B(\infty)$.
We note that we can construct the contragrediant dual crystal explicitly in terms of (finite length) Littelmann paths by reversing a path and changing the starting point.
Using this as a base, we construct a new model that no longer starts at the origin, unlike the usual Littelmann path model (or the natural model for $B(-\infty)$ as the direct limit of $\{B(-k\rho)\}_{k=0}^{\infty}$), but, roughly speaking, ``at infinity.''
We show that the map described by~\cite{LP07} extends to the $B(\infty)$ case and is a dual isomorphism between the two models.
Moreover, in an effort to avoid using the dual Littelmann path model, we are led to construct a dual alcove path model that is essentially given by reversing the alcove path, mimicking the contragrediant dual construction on Littelmann paths (see Theorem~\ref{thm:dual_infinity_alcove_to_path}).
We then show that dual alcove path model is dual isomorphic to the usual Littelmann path model.

This paper is organized as follows.
In Section~\ref{sec:background}, we give the necessary background on crystals and the alcove path model.
In Section~\ref{sec:alcove_infinity}, we describe our alcove path model for $B(\infty)$.
In Section~\ref{sec:results}, we prove our main results.
In Section~\ref{sec:continuous_limit infinite}, we construct an isomorphism between our model and the (dual) Littelmann path model.

\section{Background}
\label{sec:background}

In this section, we give a background on general crystals, the crystal $B(\infty)$, and the alcove path model.

\subsection{Crystals}
\label{sec:crystal_bg}

Let $\g$ be a symmetrizable Kac--Moody algebra with index set $I$, generalized Cartan matrix $A = (A_{ij})_{i,j\in I}$, weight lattice $P$, root lattice $Q$, fundamental weights $\{\Lambda_i \mid i \in I\}$, simple roots $\{\alpha_i \mid i\in I\}$, simple coroots $\{\alpha_i^{\vee} \mid i\in I\}$, and Weyl group $W$.
Let $U_q(\g)$ be the corresponding Drinfel'd--Jimbo quantum group~\cite{Drinfeld85,Jimbo85}.
Let $\hh^*_{\RR} := \RR \otimes_{\ZZ} P$ and $\hh_{\RR} := \RR \otimes_{\ZZ} P^{\vee}$ be the corresponding dual space, where $P^{\vee}$ is the coweight lattice.
We also denote the canonical pairing $\inner{\cdot}{\cdot}
\colon \hh_{\RR}^* \times \hh_{\RR} \to \RR$ given by $\inner{\alpha_i}{\alpha_j^{\vee}} = A_{ij}$.
Let $\Phi^+$ denote positive roots, $P^+$ denote the dominant weights, and $\rho = \sum_{i \in I} \Lambda_i$.
For a root $\alpha$, the corresponding coroot is $\alpha^{\vee} := 2 \alpha / \langle \alpha, \phi(\alpha) \rangle$, where $\phi \colon \hh_{\RR}^* \to \hh_{\RR}$ is the $\RR$-linear isomorphism given by $\phi(\alpha_i) = \alpha_i^{\vee}$.

An \defn{abstract $U_q(\g)$-crystal} is a nonempty set $B$ together with maps
\begin{align*}
e_i, f_i & \colon B \to B\sqcup\{0\},
\\ \varepsilon_i, \varphi_i & \colon B \to \ZZ \sqcup \{-\infty\}, \\
\wt & \colon B \to P, 
\end{align*}
which satisfy the properties
\begin{enumerate}
\item $\varphi_i(b) = \varepsilon_i(b) + \inner{\wt(b)}{\alpha_i^{\vee}}$ for all $i \in I$,
\item if $b\in B$ satisfies $e_ib \neq 0$, then
\begin{enumerate}
\item $\varepsilon_i(e_ib) = \varepsilon_i(b) - 1$,
\item $\varphi_i(e_ib) = \varphi_i(b) + 1$,
\item $\wt(e_ib) = \wt(b) + \alpha_i$,
\end{enumerate}
\item if $b\in B$ satisfies $f_ib \neq 0$, then
\begin{enumerate}
\item $\varepsilon_i(f_ib) = \varepsilon_i(b) + 1$,
\item $\varphi_i(f_ib) = \varphi_i(b) - 1$,
\item $\wt(f_ib) = \wt(b) - \alpha_i$,
\end{enumerate}
\item $f_ib = b^{\prime}$ if and only if $b = e_i b^{\prime}$ for $b,b^{\prime} \in B$ and $i\in I$,
\item if $\varphi_i(b) = -\infty$ for $b\in B$, then $e_i b = f_i b =0$.
\end{enumerate}
The maps $e_i$ and $f_i$, for $i \in I$, are called the \defn{crystal operators} or \defn{Kashiwara operators}. We refer the reader to \cite{HK02,K91} for details.

We call an abstract $U_q(\g)$-crystal \defn{upper regular} if
\[
\varepsilon_i(b) = \max\{ k \in\ZZ_{\ge0} \mid e_i^k b \neq 0 \}
\]
for all $b\in B$. Likewise, an abstract $U_q(\g)$-crystal is \defn{lower regular} if
\[
\varphi_i(b) = \max\{ k \in \ZZ_{\ge0} \mid f_i^kb \neq 0\}
\]
for all $b\in B$. When $B$ is both upper regular and lower regular, then we say $B$ is \defn{regular}. For $B$ a regular crystal, we can express an entire $i$-string through an element $b \in B$ diagrammatically by
\[
e_i^{\varepsilon_i(b)}b \overset{i}{\longrightarrow}
\cdots \overset{i}{\longrightarrow}
e_i^2 b \overset{i}{\longrightarrow}
e_ib \overset{i}{\longrightarrow}
b \overset{i}{\longrightarrow}
f_ib \overset{i}{\longrightarrow}
f_i^2 b \overset{i}{\longrightarrow}
\cdots \overset{i}{\longrightarrow}
f_i^{\varphi_i(b)}b.
\]

An abstract $U_q(\g)$-crystal is called \defn{highest weight} if there exists an element $u \in B$ such that $e_i u = 0$ for all $i \in I$ and there exists a finite sequence $(i_1, i_2, \dotsc, i_{\ell})$ such that $b = f_{i_1} f_{i_2} \cdots f_{i_{\ell}} u$ for all $b \in B$. The element $u$ is called the \defn{highest weight element}.

Let $B_1$ and $B_2$ be two abstract $U_q(\g)$-crystals.  A \defn{crystal morphism} $\psi\colon B_1 \to B_2$ is a map $B_1\sqcup\{0\} \to B_2 \sqcup \{0\}$ such that
\begin{enumerate}
\item $\psi(0) = 0$;
\item if $b \in B_1$ and $\psi(b) \in B_2$, then $\wt(\psi(b)) = \wt(b)$, $\varepsilon_i(\psi(b)) = \varepsilon_i(b)$, and $\varphi_i(\psi(b)) = \varphi_i(b)$;
\item for $b \in B_1$, we have $\psi(e_i b) = e_i \psi(b)$ provided $\psi(e_ib) \neq 0$ and $e_i\psi(b) \neq 0$;
\item for $b\in B_1$, we have $\psi(f_i b) = f_i \psi(b)$ provided $\psi(f_ib) \neq 0$ and $f_i\psi(b) \neq 0$.
\end{enumerate}
A morphism $\psi$ is called \defn{strict} if $\psi$ commutes with $e_i$ and $f_i$ for all $i \in I$.  Moreover, a morphism $\psi\colon B_1 \to B_2$ is called an \defn{embedding} or \defn{isomorphism} if the induced map $B_1 \sqcup\{0\} \to B_2 \sqcup \{0\}$ is injective or bijective, respectively. If there exists an isomorphism between $B_1$ and $B_2$, say they are isomorphic and write $B_1 \iso B_2$.

The tensor product $B_2 \otimes B_1$ is the crystal whose set is the Cartesian product $B_2\times B_1$ and the crystal structure given by
\begin{align*}
e_i(b_2 \otimes b_1) &= \begin{cases}
e_i b_2 \otimes b_1 & \text{if } \varepsilon_i(b_2) > \varphi_i(b_1), \\
b_2 \otimes e_i b_1 & \text{if } \varepsilon_i(b_2) \le \varphi_i(b_1),
\end{cases} \\
f_i(b_2 \otimes b_1) &= \begin{cases}
f_i b_2 \otimes b_1 & \text{if } \varepsilon_i(b_2) \ge \varphi_i(b_1), \\
b_2 \otimes f_i b_1 & \text{if } \varepsilon_i(b_2) < \varphi_i(b_1),
\end{cases} \\ 
\varepsilon_i(b_2 \otimes b_1) &= \max\big( \varepsilon_i(b_1), \varepsilon_i(b_2) - \inner{\alpha_i^{\vee}}{\wt(b_1)} \bigr), \\
\varphi_i(b_2 \otimes b_1) &= \max\big( \varphi_i(b_2), \varphi_i(b_1) + \inner{\alpha_i^{\vee}}{\wt(b_2)} \bigr), \\
\wt(b_2 \otimes b_1) &= \wt(b_2) + \wt(b_1).
\end{align*}

\begin{remark}
\label{rem:tensor_order}
Our convention for tensor products is opposite the convention given by Kashiwara in~\cite{K91}.
\end{remark}


We say an abstract $U_q(\g)$-crystal is simply a \defn{$U_q(\g)$-crystal} if it is crystal isomorphic to the crystal basis of a $U_q(\g)$-module.

The highest weight $U_q(\g)$-module $V(\lambda)$ for $\lambda \in P^+$ has a crystal basis~\cite{K90,K91}. The corresponding (abstract) $U_q(\g)$-crystal is denoted by $B(\lambda)$, and we denote the highest weight element by $u_{\lambda}$. Moreover, the negative half of the quantum group $U_q^-(\g)$ admits a crystal basis denoted by $B(\infty)$, and we denote the highest weight element by $u_{\infty}$. Note that $B(\lambda)$ is a regular $U_q(\g)$-crystal, but $B(\infty)$ is only upper regular.

Consider a directed system of abstract $U_q(\g)$-crystals $\{B_j\}_{j\in J}$ with crystal morphisms $\psi_{k,j}\colon B_j \to B_k$ for $j \leq k$ (with $\psi_{j,j}$ being the identity map on $B_j$) such that $\psi_{k,j} \psi_{j,i} = \psi_{k,i}$ for $i \leq j \leq k$.
Let $\vec{B} = \varinjlim_{j \in J} B_j$ be the direct limit of this system, and let $\psi_{(j)} \colon B_j \to \vec{B}$.
Then Kashiwara showed in~\cite{K02} that $\vec{B}$ has a crystal structure induced from the crystals $\{B_j\}_{j \in J}$; in other words, direct limits exist in the category of abstract $U_q(\g)$-crystals.
Specifically, for $\vec{b} \in \vec{B}$ and $i\in I$, define $e_i\vec{b}$ to be $\psi_{(j)}(e_i b_j)$ if there exists $b_j \in B_j$ such that $\psi_{(j)}(b_j) = \vec{b}$ and $e_i(b_j) \neq 0$, otherwise set $e_i\vec{b} = 0$.
Note that this definition does not depend on the choice of $b_j$.
The definition of $f_i\vec{b}$ is similar.
Moreover, the functions $\wt$, $\varepsilon_i$, and $\varphi_i$ on $B_j$ extend to functions on $\vec{B}$.

\begin{dfn}
\label{def:T_crystal}
For a weight $\lambda$, let $T_\lambda = \{t_\lambda\}$ be the abstract $U_q(\g)$-crystal with operations defined by
\begin{gather*}
e_i t_\lambda = f_i t_\lambda = 0,
\\ \varepsilon_i(t_\lambda) = \varphi_i(t_\lambda) = -\infty,
\\ \wt(t_\lambda) = \lambda,
\end{gather*}
for any $i \in I$.
\end{dfn}

Consider an abstract $U_q(\g)$-crystal $B$, then the tensor product $T_{\lambda} \otimes B$ has the same crystal graph as $B$ (but the weight, $\varepsilon_i$, and $\varphi_i$ have changed).  Next, we recall from~\cite{K02} that the map
\[
\psi_{\lambda+\mu,\lambda}\colon  T_{-\lambda}\otimes B(\lambda) \lhook\joinrel\longrightarrow T_{-\lambda-\mu} \otimes B(\lambda+\mu)
\]
which sends $t_{-\lambda}\otimes u_\lambda  \mapsto t_{-\lambda-\mu}\otimes u_{\lambda+\mu}$ is a crystal embedding, and this morphism commutes with $e_i$ for each $i \in I$.  Moreover, for any $\lambda, \mu, \xi \in P^+$, the diagram
\begin{equation}
\label{eq:directed_system}
\begin{tikzpicture}[xscale=5,yscale=1.5,baseline=0.6cm]
\node (1) at (0,1) {$T_{-\lambda} \otimes B(\lambda)$};
\node (2) at (1,1) {$T_{-\lambda-\mu}\otimes B(\lambda+\mu)$};
\node (3) at (1,-0.5) {$T_{-\lambda-\mu-\xi}\otimes B(\lambda+\mu+\xi)$};
\path[->,font=\scriptsize]
 (1) edge node[above]{$\psi_{\lambda+\mu,\lambda}$} (2)
 (1) edge node[anchor=north east]{$\psi_{\lambda+\mu+\xi,\lambda}$} (3)
 (2) edge node[right]{$\psi_{\lambda+\mu+\xi,\lambda+\mu}$} (3);
\end{tikzpicture}
\end{equation}
commutes. Furthermore, if we order $P^+$ by $\mu \leq \lambda$ if and only if $\inner{\lambda-\mu}{\alpha_i^{\vee}} \geq 0$ for all $i \in I$, the set $\{T_{-\lambda}\otimes B(\lambda)\}_{\lambda\in P^+}$ is a directed system.  

\begin{thm}[\cite{K02}]
\label{thm:direct_limit_construction}
We have
\[
B(\infty) = \varinjlim_{\lambda\in P^+} T_{-\lambda}\otimes B(\lambda).
\]
\end{thm}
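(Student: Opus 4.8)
The plan is to realize $B(\infty)$ as the common target of a compatible family of embeddings out of the directed system and then to check that this family induces an isomorphism from the direct limit. The key external input is Kashiwara's projection $\pi_\lambda \colon B(\infty) \to B(\lambda) \sqcup \{0\}$ from~\cite{K91}, the unique map determined by $\pi_\lambda(u_\infty) = u_\lambda$ and $\pi_\lambda(f_i b) = f_i \pi_\lambda(b)$ (with the convention $f_i 0 = 0$). It is surjective, it intertwines the $e_i$ whenever both sides are nonzero, and it restricts to a bijection from the locus $\{b \in B(\infty) : \pi_\lambda(b) \neq 0\}$ onto $B(\lambda)$ that shifts weights by $\lambda$, preserves each $\varepsilon_i$, and transports the $e_i$- and $f_i$-action faithfully where images remain nonzero. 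Writing $\sigma_\lambda \colon B(\lambda) \hookrightarrow B(\infty)$ for the inverse of this restriction, I would set $\Theta_\lambda(t_{-\lambda} \otimes b) := \sigma_\lambda(b)$.

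First I would verify that each $\Theta_\lambda \colon T_{-\lambda} \otimes B(\lambda) \to B(\infty)$ is a crystal embedding. Because $\varepsilon_i(t_{-\lambda}) = \varphi_i(t_{-\lambda}) = -\infty$, the tensor product rules force $e_i(t_{-\lambda} \otimes b) = t_{-\lambda} \otimes e_i b$ and $f_i(t_{-\lambda} \otimes b) = t_{-\lambda} \otimes f_i b$, while $\wt(t_{-\lambda} \otimes b) = \wt(b) - \lambda$ and $\varepsilon_i(t_{-\lambda} \otimes b) = \varepsilon_i(b)$; these match the statistics of $\sigma_\lambda(b)$ in $B(\infty)$ by the stated properties of $\pi_\lambda$, so $\Theta_\lambda$ preserves $\wt$, $\varepsilon_i$, $\varphi_i$ and commutes with $e_i$, $f_i$ wherever the relevant images are nonzero, and it is injective since $\sigma_\lambda$ is. Next I would check the compatibility $\Theta_{\lambda+\mu} \circ \psi_{\lambda+\mu,\lambda} = \Theta_\lambda$: both sides send $t_{-\lambda} \otimes u_\lambda$ to $u_\infty$ and both commute with the $f_i$ on the locus where images are nonzero, and since every element of $T_{-\lambda} \otimes B(\lambda)$ is reached from $t_{-\lambda} \otimes u_\lambda$ by a sequence of $f_i$ (and the images in $B(\infty)$ are never killed by any $f_i$), an induction on word length shows the two morphisms agree. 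By the universal property of the direct limit these assemble into a single crystal morphism $\Theta \colon \varinjlim_{\lambda \in P^+} T_{-\lambda} \otimes B(\lambda) \to B(\infty)$.

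It then remains to show $\Theta$ is a bijection. Injectivity is formal: given two elements of the direct limit, directedness of $\{T_{-\lambda} \otimes B(\lambda)\}_{\lambda \in P^+}$ lets me represent both in a common $T_{-\nu} \otimes B(\nu)$, where the injectivity of $\Theta_\nu$ applies. Surjectivity is the crux. Any $b \in B(\infty)$ has the form $b = f_{i_1} \cdots f_{i_\ell} u_\infty$, and I would specialize to $\lambda = k\rho$: since $\inner{\rho}{\alpha_i^{\vee}} = 1$, the partial product $f_{i_1} \cdots f_{i_j} u_{k\rho}$ has weight $k\rho - (\alpha_{i_1} + \cdots + \alpha_{i_j})$, which pairs with $\alpha_{i_{j+1}}^{\vee}$ to at least $k - 2\ell$; hence for $k > 2\ell$ each $\varphi_{i_{j+1}}$ along the chain is positive and, by lower regularity of $B(k\rho)$, the whole word survives, giving $\pi_{k\rho}(b) = f_{i_1} \cdots f_{i_\ell} u_{k\rho} \neq 0$. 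Therefore $b = \sigma_{k\rho}\bigl(\pi_{k\rho}(b)\bigr) = \Theta_{k\rho}\bigl(t_{-k\rho} \otimes \pi_{k\rho}(b)\bigr)$ lies in the image of $\Theta$, so $\Theta$ is onto and hence an isomorphism.

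I expect the genuine obstacle to be isolating and justifying the properties of $\pi_\lambda$ that the argument rests on, namely that it descends to a bijection onto its support which carries the $e_i$-strings faithfully. This is exactly where Kashiwara's theory of the lower crystal lattice and the presentation $V(\lambda) \iso U_q^-(\g)\, u_\lambda$ enters, and it is the content I would quote from~\cite{K91,K02} rather than reprove. Everything downstream—the tensor computation through $T_{-\lambda}$, the compatibility with $\psi_{\lambda+\mu,\lambda}$, and the stabilization bound $k > 2\ell$—is then routine bookkeeping.
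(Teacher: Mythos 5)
The paper does not actually prove this statement: it is imported verbatim from Kashiwara~\cite{K02}, so there is no internal proof to compare against. Judged on its own, your argument is a correct reconstruction of the standard one. The projection $\pi_\lambda$ with the properties you list (commutation with $f_i$, compatibility with $e_i$ on the support, bijection from its support onto $B(\lambda)$, preservation of $\varepsilon_i$ and shift of $\wt$ by $\lambda$) is exactly Kashiwara's Theorem~5 in~\cite{K91}, and everything you build on top of it --- the computation of the crystal structure on $T_{-\lambda}\otimes B(\lambda)$ via $\varepsilon_i(t_{-\lambda})=\varphi_i(t_{-\lambda})=-\infty$, the compatibility $\Theta_{\lambda+\mu}\circ\psi_{\lambda+\mu,\lambda}=\Theta_\lambda$ by induction on $f$-words, and the stabilization bound for surjectivity --- is sound. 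Two small remarks. First, in the surjectivity step the partial products should be the suffixes $f_{i_j}\cdots f_{i_\ell}u_{k\rho}$ (the operators act right to left), not the prefixes; since each partial weight is $k\rho$ minus at most $\ell$ simple roots and each $\langle\alpha_j,\alpha_i^\vee\rangle\le 2$, the estimate $\langle\wt,\alpha_i^\vee\rangle\ge k-2\ell$ and the conclusion are unaffected. Second, to conclude that $\Theta$ is an isomorphism of crystals rather than merely a bijective morphism, you must check it commutes with $f_i$ on the direct limit, whose crystal structure is defined via representatives on which $f_i$ does not vanish; this requires choosing $k$ large enough that $f_ib$ as well as $b$ survives the projection, which your bound supplies after replacing $\ell$ by $\ell+1$. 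This is precisely the point the authors single out in the introduction when they invoke the theorem.
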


From Theorem~\ref{thm:direct_limit_construction}, we have that for any $\lambda \in P^+$, there exists a natural projection $p_{\lambda} \colon B(\infty) \to T_{-\lambda} \otimes B(\lambda)$ and inclusion $i_{\lambda} \colon T_{-\lambda} \otimes B(\lambda) \to B(\infty)$ such that $p_{\lambda} \circ i_{\lambda}$ is the identity on $T_{-\lambda} \otimes B(\lambda)$.

We can also form the \defn{contragrediant dual crystal} $B^{\vee}$ of $B$ as follows. Let $B^{\vee} = \{ b^{\vee} \mid b \in B\}$, and define the crystal structure on $B^{\vee}$ by
\begin{gather*}
f_i(b^{\vee}) = (e_i b)^{\vee},
\hspace{50pt}
e_i(b^{\vee}) = (f_i b)^{\vee}
\\ \varphi_i(b^{\vee}) = \varepsilon_i(b),
\hspace{50pt}
\varepsilon_i(b^{\vee}) = \varphi_i(b),
\\ \wt(b^{\vee}) = -\wt(b),
\end{gather*}
for all $b \in B$.
Note that $(B^{\vee})^{\vee}$ is canonically isomorphic to $B$. We say the $B$ is \defn{dual isomorphic} to $C$ if there exists a crystal isomorphism $\Psi \colon B \to C^{\vee}$ and the canonically induced bijection $\Psi^{\vee} \colon B \to C$ is a \defn{dual crystal isomorphism}. Explicitly, a dual crystal isomorphism satisfies
\begin{gather*}
f_i\bigl( \Psi^{\vee}(b)\bigr) = \Psi^{\vee}(e_i b),
\hspace{50pt}
e_i\bigl( \Psi^{\vee}(b)\bigr) = \Psi^{\vee}(f_i b),
\\ \varphi_i\bigl(\Psi^{\vee}(b)\bigr) = \varepsilon_i(b),
\hspace{50pt}
\varepsilon_i\bigl(\Psi^{\vee}(b)\bigr) = \varphi_i(b),
\\ \wt\bigl(\Psi^{\vee}(b)\bigr) = -\wt(b),
\end{gather*}
for all $b \in B$.

\subsection{Alcove Path Model}
\label{section:alcove_model}
Given $\alpha \in \Phi^+$ and $h \in \ZZ$, let
\begin{equation}
	H_{\alpha, h}:= \left\{  \lambda \in \hh^{*}_{\RR} \mid
	\inner{\lambda}{\alpha^{\vee}} = h \right\}
	\label{eqn:affine_hyperplane}.
\end{equation}
The hyperplanes $H_{\alpha,h}$ divide the real vector space $\hh^{*}_\RR$ into open connected components, called \emph{alcoves.} 
Let $s_{\alpha,h}$ denote the reflection in $\hh^*_{\RR}$ across $H_{\alpha,h}$, and denote $s_{\alpha} := s_{\alpha,0}$.
Let $A_{\circ} = \{ \mu \in \hh^*_{\RR} \mid 0 < \inner{\mu}{\alpha_i^{\vee}} < 1 \text{ for all } i \in I \}$ denote the fundamental alcove, and let $A_{\lambda} = A_{\circ} + \lambda$ be the translation of $A_{\circ}$ by $\lambda$.
Fix some $\lambda \in P^+$. 
For a pair of adjacent alcoves $A$ and $B$, \textit{i.e.}, their closures have non-empty intersection, we write $A \rootarrow{\alpha} B$ if the common wall of $A$ and $B$ is orthogonal to the root $\alpha \in \Phi$ and $\alpha$ points in the direction from $A$ to $B$.
Equate a total ordering on the set
\[
    R_{\lambda} := \{ (\beta, h) \mid \beta\in\Phi^+,\, 0 \le h < \langle \lambda, \beta^\vee \rangle \}
\]
with the sequence $\Gamma = (\beta_{\ind} \mid \beta_{\ind} \in \Phi^+)_{\ind \in \IndSet}$ for some totally ordered countable indexing set $\IndSet = \{\ind_1 < \ind_2 < \cdots < \ind_m\}$ (with possibly $m = \infty$) such that a root $\alpha$ occurs $\inner{\lambda}{\alpha^{\vee}}$ times by $(\beta, h)$ being the $h$-th occurrence of $\beta$ in $\Gamma$.
In other words, to obtain $\Gamma$ from a totally ordered $R_\lambda$, ignore the second index from the tuples $(\beta,k)$.
For each $\ind \in \IndSet$ and $\delta \in \Phi^+$, define $N^{\Gamma}_{\ind}(\delta) := \lvert \{ \ind' < \ind \mid \beta_{\ind'} = \delta \} \rvert$.
A sequence $\Gamma = (\beta_{\ind})_{\ind \in \IndSet}$ is called a \defn{$\lambda$-chain} if it corresponds to a total ordering on $R_{\lambda}$ and for any $\alpha,\beta,\gamma \in \Phi^+$ such that $\alpha \neq \beta$ and $\gamma^{\vee} = \alpha^{\vee} + p \beta^{\vee}$, for some $p \in \ZZ$, then
\begin{equation}
\label{eq:chain_cond}
N^{\Gamma}_{\ind}(\gamma) = N^{\Gamma}_{\ind}(\alpha) + p N^{\Gamma}_{\ind}(\beta)
\end{equation}
for all $\ind \in \IndSet$ such that $\beta = \beta_{\ind}$.
\begin{remark}
    \label{remark:total_order}
    We can recover the total order on $R_{\lambda}$ from $\Gamma$ by $\beta_k \mapsto \bigl( \beta_k, N^{\Gamma}_k( \beta_k) \bigr)$, and we will also refer to this order, viewed as a sequence, as a $ \lambda $-chain.
\end{remark}
If $\lvert R_{\lambda} \rvert = m < \infty$, we can equate a $\lambda$-chain with an alcove path of shortest length from $A_{\circ}$ to $A_{-\lambda}$ by
\[
A_{\circ} = A_0 \rootarrow{-\beta_1} A_1 \rootarrow{-\beta_2} \cdots \rootarrow{-\beta_m} A_m = A_{-\lambda}.
\]

\begin{dfn}
\label{def:concatenation}
Let $\Gamma = (\beta_{\ind})_{\ind \in \IndSet}$ and $\Gamma' = (\beta'_{\ind'})_{\ind' \in \IndSet'}$ be a $\lambda$-chain and a $\lambda'$-chain respectively.
Let $\Gamma \ast \Gamma'$ denote the \defn{concatenated} sequence $(\beta^*_{\ind^*})_{\ind^* \in \IndSet \sqcup \IndSet'}$, where
\[
\beta^*_{\ind^*} = \begin{cases}
\beta_{\ind^*} & \text{if } \ind^* \in \IndSet, \\
\beta'_{\ind^*} & \text{if } \ind^* \in \IndSet',
\end{cases}
\]
and the ordering on $\IndSet \sqcup \IndSet'$ is given by the total orders on $\IndSet$ and $\IndSet'$ and defining $\IndSet < \IndSet'$.
We denote $\Gamma^p$ as $\Gamma$ concatenated with itself $p$ times, and we consider the indexing set to be $\IndSet \times \{1,\dotsc,p\}$.
\end{dfn}

\begin{prop}
The concatenation $\Gamma \ast \Gamma'$ is a $(\lambda+\lambda')$-chain.
\end{prop}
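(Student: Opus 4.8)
The plan is to verify the two defining requirements of a $(\lambda+\lambda')$-chain in turn: first that $\Gamma \ast \Gamma'$ corresponds to a total ordering on $R_{\lambda+\lambda'}$, and second that it satisfies the chain condition~\eqref{eq:chain_cond}. The single observation that drives both parts is how the counting function behaves under concatenation. Since the order on $\IndSet \sqcup \IndSet'$ places all of $\IndSet$ before all of $\IndSet'$, for $\ind^* \in \IndSet$ the indices below $\ind^*$ are exactly those of $\IndSet$ below $\ind^*$, whereas for $\ind^* \in \IndSet'$ the indices below $\ind^*$ consist of all of $\IndSet$ together with those of $\IndSet'$ below $\ind^*$. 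Writing $T_\Gamma(\delta) := \inner{\lambda}{\delta^\vee}$ for the total number of occurrences of a root $\delta$ in $\Gamma$ (which is the number of pairs in $R_\lambda$ with first coordinate $\delta$, using $\lambda \in P^+$), this yields
\[
N^{\Gamma \ast \Gamma'}_{\ind^*}(\delta) =
\begin{cases}
N^{\Gamma}_{\ind^*}(\delta) & \text{if } \ind^* \in \IndSet, \\
T_\Gamma(\delta) + N^{\Gamma'}_{\ind^*}(\delta) & \text{if } \ind^* \in \IndSet',
\end{cases}
\]
for every $\delta \in \Phi^+$.

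For the first requirement, I would count total occurrences: a root $\delta$ appears $T_\Gamma(\delta) = \inner{\lambda}{\delta^\vee}$ times in $\Gamma$ and $T_{\Gamma'}(\delta) = \inner{\lambda'}{\delta^\vee}$ times in $\Gamma'$, hence $\inner{\lambda}{\delta^\vee} + \inner{\lambda'}{\delta^\vee} = \inner{\lambda+\lambda'}{\delta^\vee}$ times in $\Gamma \ast \Gamma'$. Consequently the recovery map $\ind^* \mapsto \bigl(\beta^*_{\ind^*}, N^{\Gamma\ast\Gamma'}_{\ind^*}(\beta^*_{\ind^*})\bigr)$ of Remark~\ref{remark:total_order} is a bijection onto $R_{\lambda+\lambda'}$, so $\Gamma \ast \Gamma'$ corresponds to a total ordering on $R_{\lambda+\lambda'}$.

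For the chain condition, fix $\alpha, \beta, \gamma \in \Phi^+$ with $\alpha \neq \beta$ and $\gamma^\vee = \alpha^\vee + p\beta^\vee$, and fix $\ind^*$ with $\beta = \beta^*_{\ind^*}$. If $\ind^* \in \IndSet$, then the displayed formula reduces every count $N^{\Gamma\ast\Gamma'}_{\ind^*}(\cdot)$ to the corresponding $N^{\Gamma}_{\ind^*}(\cdot)$ and $\beta = \beta_{\ind^*}$ in $\Gamma$, so~\eqref{eq:chain_cond} for $\Gamma \ast \Gamma'$ is exactly~\eqref{eq:chain_cond} for the $\lambda$-chain $\Gamma$. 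If instead $\ind^* \in \IndSet'$, then $\beta = \beta'_{\ind^*}$ in $\Gamma'$, and I would substitute the second case of the displayed formula. The $N^{\Gamma'}$ terms satisfy $N^{\Gamma'}_{\ind^*}(\gamma) = N^{\Gamma'}_{\ind^*}(\alpha) + p\, N^{\Gamma'}_{\ind^*}(\beta)$ because $\Gamma'$ is a $\lambda'$-chain, while the constant terms satisfy $T_\Gamma(\gamma) = \inner{\lambda}{\gamma^\vee} = \inner{\lambda}{\alpha^\vee} + p\inner{\lambda}{\beta^\vee} = T_\Gamma(\alpha) + p\, T_\Gamma(\beta)$ by linearity of the pairing in its second argument together with $\gamma^\vee = \alpha^\vee + p\beta^\vee$. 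Adding these two identities yields~\eqref{eq:chain_cond} for $\Gamma \ast \Gamma'$ at $\ind^*$.

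The argument is essentially bookkeeping, so there is no deep obstacle; the one point that needs care is precisely the decomposition of $N^{\Gamma \ast \Gamma'}$ in the second case, where the contribution of the entire chain $\Gamma$ enters as the constant shift $T_\Gamma(\delta) = \inner{\lambda}{\delta^\vee}$. The conceptual content is that the chain condition~\eqref{eq:chain_cond} is affine-linear in the counts, and that the relation $\gamma^\vee = \alpha^\vee + p\beta^\vee$ governing $(\alpha,\beta,\gamma)$ is transported identically to the variable part (via the $\lambda'$-chain property of $\Gamma'$) and to the constant part (via linearity of $\inner{\lambda}{\cdot}$), so the two combine cleanly.
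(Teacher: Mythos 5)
Your proof is correct and follows essentially the same route as the paper's: the same decomposition of the counting function $N^{\Gamma\ast\Gamma'}$ into the two cases $\ind^*\in\IndSet$ and $\ind^*\in\IndSet'$, with the second case handled by adding the linearity identity $\inner{\lambda}{\gamma^\vee}=\inner{\lambda}{\alpha^\vee}+p\inner{\lambda}{\beta^\vee}$ to the chain condition for $\Gamma'$, which is exactly the algebraic manipulation in the paper's displayed computation. The only difference is that you spell out the total-ordering verification that the paper dismisses as clear.
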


\begin{proof}
It is clear that $\Gamma \ast \Gamma'$ gives a total ordering on $R_{\lambda+\lambda'}$.
Note that Equation~\eqref{eq:chain_cond} is satisfied for $\ind \in \IndSet$ because $N^{\Gamma}_{\ind}(\delta) = N^{\Gamma \ast \Gamma'}_{\ind}(\delta)$ for all $\delta \in \Phi^+$ and $\ind \in \IndSet$ and because $\Gamma$ is a $\lambda$-chain.
Similarly, we have $N^{\Gamma \ast \Gamma'}_{\ind'}(\alpha) = \inner{\lambda}{\alpha^{\vee}} + N^{\Gamma'}_{\ind'}(\alpha)$ for all $\ind' \in \IndSet'$ and $\alpha \in \Phi^+$ since $\IndSet < \ind'$ and $\Gamma$ is a $\lambda$-chain.
Likewise, if for $\gamma^{\vee} = \alpha^{\vee} + p \beta^{\vee}$, then Equation~\eqref{eq:chain_cond} is satisfied for all $\ind' \in \IndSet'$ because
\begin{align*}
N^{\Gamma'}_{\ind'}(\gamma) & = N^{\Gamma'}_{\ind'}(\alpha) + p N^{\Gamma'}_{\ind'}(\beta),
\\ N^{\Gamma'}_{\ind'}(\gamma) & = N^{\Gamma'}_{\ind'}(\alpha) + p N^{\Gamma'}_{\ind'}(\beta) + \inner{\lambda}{\alpha^{\vee} + p\beta^{\vee} - \gamma^{\vee}},
\\ \inner{\lambda}{\gamma^{\vee}} + N^{\Gamma'}_{\ind'}(\gamma) & = \inner{\lambda}{\alpha^{\vee}} + N^{\Gamma'}_{\ind'}(\alpha) + p \bigl( \inner{\lambda}{\beta^{\vee}} + N^{\Gamma'}_{\ind'}(\beta) \bigr),
\\ N^{\Gamma \ast \Gamma'}_{\ind'}(\gamma) & = N^{\Gamma \ast \Gamma'}_{\ind'}(\alpha) + p N^{\Gamma \ast \Gamma'}_{\ind'}(\beta),
\end{align*}
where the first equality holds because $\Gamma'$ is a $\lambda'$-chain.
\end{proof}

\begin{remark}
We note that being a finite $\lambda$-chain means it is a minimal length alcove path to $A_{-\lambda}$~\cite{LP07,LP08}.
Therefore, for two finite chains, their concatenation is a minimal length alcove path to $A_{-\lambda-\mu}$.
\end{remark}

Fix a total order on the set of simple roots $\alpha_1 \prec \alpha_2 \prec \cdots \prec \alpha_n$.
We recall the definition of a particular $\lambda$-chain from~\cite[Prop.~4.2]{LP08} called the \defn{lex $\lambda$-chain} and denoted by $\Gamma_{\lambda}$.
We define the lex total ordering on $R_{\lambda}$ as follows.
For each $(\beta, h) \in R_{\lambda}$, let $\beta^{\vee} = c_1\alpha_1^{\vee} + \cdots + c_r \alpha_n^{\vee}$, and define the vector
\[
     v_{\beta, h} :=\frac{1}{\langle \lambda, \beta^{\vee} \rangle}(h, c_1, \ldots, c_n)
\]
in $\QQ^{n+1}$.
Then define $(\beta, h) < (\beta', h')$ if and only if $v_{\beta, h} < v_{\beta', h'}$ in the lexicographic order on $\QQ^{n+1}$, which defines a total order on $R_{\lambda}$.
That is to say, if the $\ind$-th element of $R_{\lambda}$ with respect to this order is $(\beta, h)$, then set $\beta_{\ind} = \beta$, and $\ell_{\ind} = h$.


Let $r_j = s_{\beta_j}$ and $\widehat{r}_j = s_{\beta_j, -\ell_j}$.
We consider a set of \defn{folding positions} $J = \{j_1 < j_2 < \cdots < j_p\} \subseteq \IndSet$, and call $J$ \defn{admissible} if
we have 
\begin{equation}
\label{eqn:bruhat_path}
\iota(J) := 1 \lessdot r_{j_1} \lessdot r_{j_1} r_{j_2} \lessdot \cdots \lessdot
r_{j_1} r_{j_2} \dotsm r_{j_p} =: \tau(J),
\end{equation}
where $w \lessdot w'$ denotes a cover relation in Bruhat order.
In other words, $J$ is admissible if it corresponds to a path in the Bruhat graph of $W$. 
Let $\Al(\Gamma_{\lambda})$ denote the set of all $J \subseteq \IndSet$ such that $J$ is admissible.
We also write $\Al(\lambda) := \Al(\Gamma_{\lambda})$.
We will identify the integers $j_k$ of an admissible set with the corresponding $j_k$-th element in the $\lambda$-chain; in other words, we identify $\{j_1 < \cdots < j_p\} = \{(\beta_{j_1}, \ell_{j_1}) < \cdots < (\beta_{j_p}, \ell_{j_p})\}$.

Now we recall the crystal structure on $\Al(\lambda)$ from~\cite{LP08}.
First, the weight function $\wt \colon \Al(\lambda) \to P$ is defined as
\begin{equation}
\label{eq:alcove_weight}
\wt(J) = -\widehat{r}_{j_1} \dotsm \widehat{r}_{j_p} (-\lambda).
\end{equation}
Next consider some $J \in \Al(\lambda)$ and define $\Gamma_{\lambda}(J) = (\gamma_{\ind})_{\ind \in \IndSet}$, where
\begin{equation}
    \label{eqn:defn_gamma}
\gamma_{\ind} = r_{j_1} r_{j_2} \cdots r_{j_t} (\beta_{\ind})
\end{equation}
with $t = \max\{a \mid j_a < \ind\}$.
%
%
Next, we describe the crystal operators.
Our description is in  terms of $\Gamma_{\lambda}(J)$, and is equivalent to Equation~\cite{LP08} where it is shown that crystal operators give admissible sequences.
We show this connection in Appendix~\ref{sec:ame}.
Fix some $i \in I$, and we define the sets
\begin{subequations}
\label{eqn:root_op_sets}
\begin{align} 
    I_{\alpha_i} &= \{ \ind \mid \gamma_{\ind} = \pm \alpha_i\}, \label{eqn:root_set_I} \\
    I_{\alpha_i} \setminus J &= \{ d_1 < d_2 < \cdots < d_q \}.
%
\end{align}
\end{subequations}
Consider the word on the alphabet $\{ +, - \}$ given by 
\begin{equation}
\label{eqn:crystal_op_word}
\sgn(\gamma_{d_1})\sgn(\gamma_{d_2}) \dotsm \sgn(\gamma_{d_q}),
\end{equation}
where $\sgn(\gamma)$ is the sign of $\gamma$.
Cancel $- +$ pairs in this word until none remain, and we call this the \defn{reduced $i$-signature}.
If there is no $+$ in the reduced $i$-signature, then define 
\begin{subequations}
    \label{eqn:f_op}
\begin{equation}
    \label{eqn:f_op_a}
f_i J =
\begin{cases}
J \setminus \{\min (J \cap I_{\alpha_i})\}  & \text{if } \inner{ 
    \iota(J)(\rho) 
}
{\alpha_i^{\vee}} < 0,\\
0 & \text{otherwise}.
\end{cases}
\end{equation}
Otherwise, let $a$ be the index corresponding to the rightmost $+$ in the reduced $i$-signature. Let $A = \{ j \in J \cap I_{\alpha_i} \mid j > a\}$, and define
\begin{equation}
f_i J = \begin{cases}
    J \cup \{a\} & \text{if } A = \emptyset, \\
    (J \setminus \{\min A\}) \cup \{a\} & \text{otherwise.}
\end{cases}
\end{equation}
\label{eqn:f_op_b}
\end{subequations}
\begin{remark}
\label{remark:simplify_dual_operators}
Since $\iota(J) = 1$ in Equation~\eqref{eqn:f_op_a}, we have $\inner{\iota(J)(\rho)}{\alpha_i^{\vee}} > 0$, and hence, $f_i J$ will always be $0$ in this case.
The reason for defining $f_i$ this way is to simplify  construction of crystal operators in the dual model in Section~\ref{sec:alcove_reversed}. 
\end{remark}

The definition for $e_i$ is similar.  
If no $-$ exists in the reduced $i$-signature, then define
\begin{subequations}
    \label{eqn:e_op}
\begin{equation}
    \label{eqn:e_op_a}
e_i(J)=
\begin{cases}
J \setminus \{\max (J \cap I_{\alpha_i})\}  & \text{if }
\inner{
    \iota(J) \tau(J)(\rho)
}{\alpha_i^{\vee}} < 0,\\
0 & \text{otherwise}.
\end{cases}
\end{equation}
Otherwise, let $a$ be the index corresponding to the leftmost $-$ in the reduced $i$-signature.
Let $A = \{ j \in J \cap I_{\alpha_i} \mid j < a\}$, and define
\begin{equation}
    \label{eqn:e_op_b}
e_i J = \begin{cases}
    J \cup \{a\} & \text{if } A = \emptyset, \\
    (J \setminus \{\max A \}) \cup \{a\} & \text{otherwise.}
\end{cases}
\end{equation}
\end{subequations}

\begin{remark}
If the reduced $i$-signature contains the symbol $-$, then it can be shown that $A \ne \emptyset$.
The case $A = \emptyset$ is included in Equation~\eqref{eqn:e_op_b} to simplify construction of crystal operators in the dual model in Section~\ref{sec:alcove_reversed}.
\end{remark}

For any $\lambda$-chain $\Gamma$, we define $\varepsilon_i$ and $\varphi_i$ by requiring that $\Al(\Gamma)$ is a regular crystal.

\begin{thm}[{\cite{LP08}}]
\label{thm:isomorphic_highest_weight}
Fix some $\lambda \in P^+$. Then, we have
\[
\Al(\lambda) \iso B(\lambda).
\]
\end{thm}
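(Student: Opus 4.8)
The plan is to recognize $\Al(\lambda)$ as a realization of $B(\lambda)$ by the standard strategy for highest weight crystals: check that $\Al(\lambda)$ is a regular crystal with a distinguished highest weight element of weight $\lambda$, and then match its crystal operators against a known model of $B(\lambda)$. For the matching step I would use the Littelmann path model, whose identification with $B(\lambda)$ is Littelmann's theorem~\cite{L95}; this is also the most natural choice here since the continuous limit of the alcove path model is exactly the path model.

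First I would record the facts built into the definitions. The functions $\varepsilon_i$ and $\varphi_i$ were declared so that $\Al(\lambda)$ is regular, so it remains to check that $e_i, f_i$ really define a crystal: that they send an admissible $J$ to an admissible set or to $0$, that $f_i J = J' \iff J = e_i J'$, and that the weight relations hold with $\wt$ as in~\eqref{eq:alcove_weight}. This is precisely the content imported from~\cite{LP08} and reproved in Appendix~\ref{sec:ame}: the word~\eqref{eqn:crystal_op_word} read off from $\Gamma_{\lambda}(J)$, with its $-+$ cancellation, is exactly a tensor-product signature, so the crystal axioms (1)--(5) follow. I would then identify the highest weight element: for $J = \emptyset$ we have $\iota(\emptyset) = \tau(\emptyset) = 1$ and no folding positions, so every $\gamma_k = \beta_k$ is a positive root, no $-$ appears in any $i$-signature, and since $\inner{\iota(\emptyset)\tau(\emptyset)(\rho)}{\alpha_i^{\vee}} = \inner{\rho}{\alpha_i^{\vee}} > 0$ we get $e_i \emptyset = 0$ for all $i$; moreover $\wt(\emptyset) = \lambda$ from the empty product in~\eqref{eq:alcove_weight}.

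Next I would build the comparison map. To an admissible $J = \{j_1 < \cdots < j_p\}$ I associate the piecewise-linear path obtained by folding the straight segment from the origin to $-\lambda$ at the affine hyperplanes recorded by the $\widehat{r}_{j_k}$, which is exactly the continuous limit taking alcove walks to Littelmann paths. The crux is that this map intertwines the crystal operators: the discrete sign sequence~\eqref{eqn:crystal_op_word} computed from $\Gamma_{\lambda}(J)$ must agree, cancellation and all, with the sign data that Littelmann's root operators read from the height function $t \mapsto \inner{\pi(t)}{\alpha_i^{\vee}}$. Granting this, the map is a weight-preserving bijection onto the path model commuting with every $e_i$ and $f_i$, hence a crystal isomorphism, and Littelmann's theorem yields $\Al(\lambda) \iso B(\lambda)$.

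The hard part is exactly this matching of the discrete and continuous signatures, i.e.\ checking that crossing a hyperplane orthogonal to $\pm\alpha_i$ at a folding position produces the correct rise or fall of the path height and that the reduced $i$-signature reproduces Littelmann's minimum-finding bracketing. To stay purely combinatorial I would instead prove connectedness of $\Al(\lambda)$ directly---every admissible $J$ is reachable from $\emptyset$ by lowering operators---and then invoke a characterization of $B(\lambda)$ among connected regular highest weight crystals: Stembridge's local axioms~\cite{Stembridge02} in the simply-laced case, and in general type the concatenation of the preceding Proposition, which realizes $\Al(\lambda+\mu)$ as a subcrystal of a tensor product $\Al(\lambda)\otimes\Al(\mu)$ compatibly with $B(\lambda+\mu)\hookrightarrow B(\lambda)\otimes B(\mu)$, reducing to fundamental weights. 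In either route the delicate point is controlling the interaction of $e_i, f_i$ with the Bruhat cover condition~\eqref{eqn:bruhat_path}, so that admissibility is preserved; this is where the $\lambda$-chain condition~\eqref{eq:chain_cond} does the essential work.
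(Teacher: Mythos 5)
This statement is not proved in the paper at all: it is imported verbatim from \cite{LP08}, and the only related material the paper supplies is Appendix~\ref{sec:ame}, which merely checks that its reformulation of the crystal operators agrees with the one in \cite{LP08}. Your primary route --- fold the alcove walk into a piecewise-linear path and match root operators against the Littelmann path model --- is indeed the strategy behind \cite[Thm.~9.4]{LP08}, recorded here as Theorem~\ref{thm:alcove_to_path}. But as written you have only outlined it: the single nontrivial step, namely that the reduced $i$-signature read off from $\Gamma_{\lambda}(J)$ reproduces, after $-+$ cancellation, exactly the bracketing that Littelmann's operators extract from the height function of the folded path, is the entire content of the theorem, and you explicitly defer it ("the hard part is exactly this matching"). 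Asserting that the two signatures agree is not a proof that they do; in \cite{LP08} this is where essentially all the work lies, including the verification that applying $f_i$ preserves the Bruhat cover condition~\eqref{eqn:bruhat_path}.

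Your fallback route has concrete obstructions beyond being unexecuted. Stembridge's local axioms \cite{Stembridge02} characterize highest weight crystals only for \emph{simply-laced} types, so they cannot establish the theorem for general symmetrizable $\g$, which is the setting claimed. More seriously, the proposed reduction to fundamental weights via concatenation is exactly what the paper warns against: it states that $\Al(\Gamma_{\mu} \ast \Gamma_{\lambda})$ is only \emph{conjecturally} isomorphic to $\Al(\lambda+\mu)$ (see the discussion preceding Lemma~\ref{lemma:shift_morphism}, which is why that lemma is restricted to the case $\mu + \lambda = k\lambda$), and Example~\ref{ex:concat_admissible} shows explicitly that admissible subsets of a concatenated chain are not concatenations of admissible subsets. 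So the compatibility of $\Al(\lambda)\otimes\Al(\mu)$ with $B(\lambda+\mu)\hookrightarrow B(\lambda)\otimes B(\mu)$ that your reduction requires is precisely what is unavailable here. The honest conclusion is that the theorem must simply be cited from \cite{LP08}, as the paper does.
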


\newcommand{\alcoveexrho}[2]{
\begin{tikzpicture}[>=stealth,scale=0.7,baseline=-3]
\begin{scope}
\clip (-2.4,-2.75) rectangle (2.4,3.5);
\fill[gray!10] (0,#2) -- +(60:13) -- +(120:13) -- cycle;
\foreach \i in {-7,...,7} {
   \draw (30:\i) +(120:9) -- +(120:-5);
   \draw (-30:\i) +(60:9) -- +(60:-5);
   \draw (90:\i) +(180:9) -- +(180:-5);
}
\draw[thick,->,blue] (0,#2) -- +(60:1.15) node[anchor=west] {$\Lambda_1$};
\draw[thick,->,blue] (0,#2) -- +(120:1.15) node[anchor=east] {$\Lambda_2$};
\draw[thick,->,color=black!40!green] (0,#2) -- +(30:2) node[anchor=north] {$\alpha_1$};
\draw[thick,->,color=black!40!green] (0,#2) -- +(150:2) node[anchor=north] {$\alpha_2$};
\def\len{0.67}   
#1
\fill (0,#2) circle (.1);
\end{scope}
\end{tikzpicture}
} 

\begin{figure}[t]
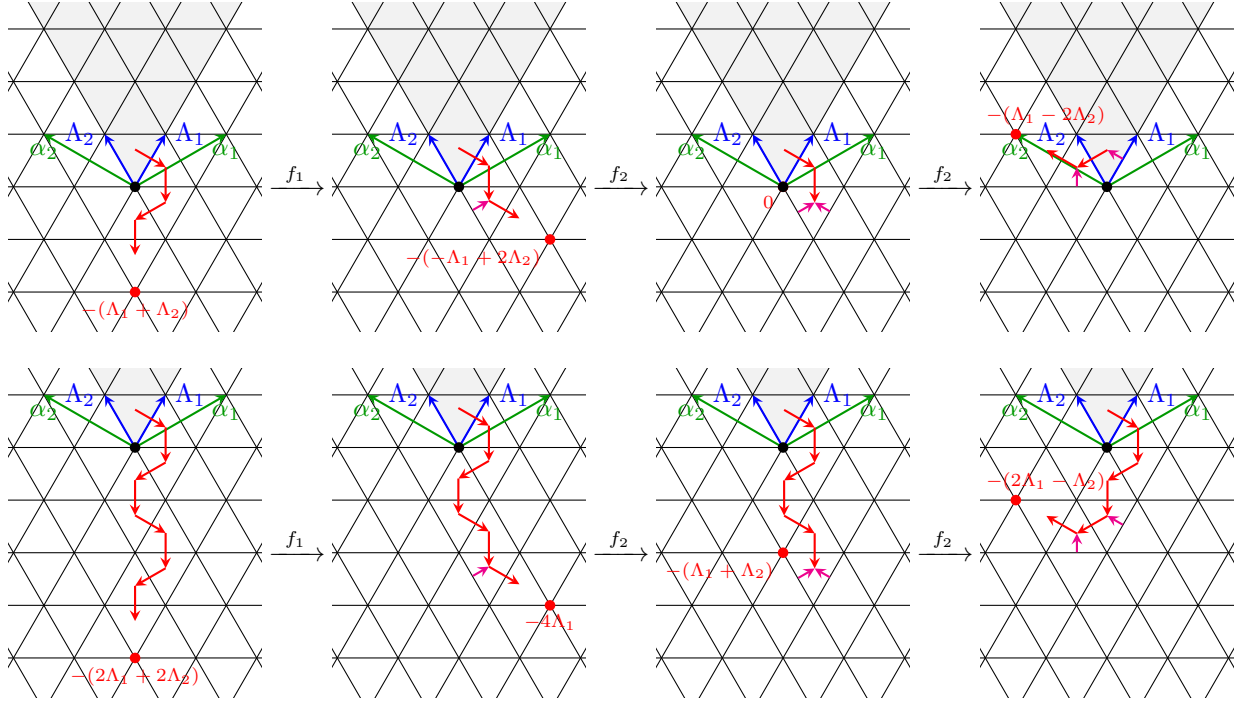

\begin{gather*}
\alcoveexrho{
  \path[thick, red,<-]
  (0,-1.3) edge +(90:\len)
  ++(90:\len) edge +(30:\len)
  ++(30:\len) edge +(90:\len)
  ++(90:\len) edge +(150:\len);
  \fill[red] (0,-2) circle (.1) node[anchor=north] {\scriptsize $-(\Lambda_1 + \Lambda_2)$};
}{0}
\xrightarrow[\hspace{15pt}]{f_1}
\alcoveexrho{
  \path[thick, red,<-]
  (1.15,-0.6) edge +(150:\len)
  ++(150:\len) edge[color=magenta] +(210:\len/2+0.015)
   edge +(90:\len)
  ++(90:\len) edge +(150:\len);
  \fill[red] (1.73,-1) circle (.1) node[anchor=north east] {\scriptsize $-(-\Lambda_1 + 2\Lambda_2)$};
}{0}
\xrightarrow[\hspace{15pt}]{f_2}
\alcoveexrho{
  \path[thick, red,<-]
  (0.6,-0.3) edge[color=magenta] +(-30:\len/2)
  edge[color=magenta] +(210:\len/2+0.025)
   edge +(90:\len)
  ++(90:\len) edge +(150:\len);
  \fill[red] (0,0) circle (.1) node[anchor=north east] {\scriptsize $0$};
}{0}
\xrightarrow[\hspace{15pt}]{f_2}
\alcoveexrho{
  \path[thick, red,<-]
  (-1.15,0.7) edge +(-30:\len)
  ++(-30:\len) edge[color=magenta] +(-90:\len/2+.02)
   edge +(30:\len)
  ++(30:\len) edge[color=magenta] +(-30:\len/2);
  \fill[red] (-1.73,1) circle (.1) node[anchor=south] {\scriptsize \hspace{20pt} $-(\Lambda_1 - 2\Lambda_2)$};
}{0}
\\[1em]
\alcoveexrho{
  \path[thick, red,<-]
  (0,-1.3) edge +(90:\len)
  ++(90:\len) edge +(30:\len)
  ++(30:\len) edge +(90:\len)
  ++(90:\len) edge +(150:\len)
  ++(150:\len) edge +(90:\len) ++(90:\len) edge +(30:\len) ++(30:\len) edge +(90:\len) ++(90:\len) edge +(150:\len);
  \fill[red] (0,-2) circle (.1) node[anchor=north] {\scriptsize $-(2\Lambda_1 + 2\Lambda_2)$};
}{2}
\xrightarrow[\hspace{15pt}]{f_1}
\alcoveexrho{
  \path[thick, red,<-]
  (1.15,-0.6) edge +(150:\len)
  ++(150:\len) edge[color=magenta] +(210:\len/2+0.015)
   edge +(90:\len)
  ++(90:\len) edge +(150:\len)
  ++(150:\len) edge +(90:\len) ++(90:\len) edge +(30:\len) ++(30:\len) edge +(90:\len) ++(90:\len) edge +(150:\len);
  \fill[red] (1.73,-1) circle (.1) node[anchor=north] {\scriptsize $-4\Lambda_1$};
}{2}
\xrightarrow[\hspace{15pt}]{f_2}
\alcoveexrho{
  \path[thick, red,<-]
  (0.6,-0.3) edge[color=magenta] +(-30:\len/2)
  edge[color=magenta] +(210:\len/2+0.025)
   edge +(90:\len)
  ++(90:\len) edge +(150:\len)
  ++(150:\len) edge +(90:\len) ++(90:\len) edge +(30:\len) ++(30:\len) edge +(90:\len) ++(90:\len) edge +(150:\len);
  \fill[red] (0,0) circle (.1) node[anchor=north east] {\scriptsize $-(\Lambda_1 + \Lambda_2)$};
}{2}
\xrightarrow[\hspace{15pt}]{f_2}
\alcoveexrho{
  \path[thick, red,<-]
  (-1.15,0.7) edge +(-30:\len)
  ++(-30:\len) edge[color=magenta] +(-90:\len/2+.02)
   edge +(30:\len)
  ++(30:\len) edge[color=magenta] +(-30:\len/2)
   edge +(90:\len)
  ++(90:\len) edge +(30:\len)
  ++(30:\len) edge +(90:\len)
  ++(90:\len) edge +(150:\len);
  \fill[red] (-1.73,1) circle (.1) node[anchor=south] {\scriptsize \hspace{20pt} $-(2\Lambda_1 - \Lambda_2)$};
}{2}
\end{gather*}
\caption{The action of a few crystal operators on $\Al(\rho)$ (above) and $\Al(2\rho)$ (below) in type $A_2$ starting with $\emptyset$ on the left.}
\label{fig:al_rho_ex}
\end{figure}

%
%

\subsection{Littelmann path model}
\label{sec:littelmann_path}

Let $\pi_1, \pi_2 \colon [0, 1] \to \hh^*_{\RR}$, and define an equivalence relation $\sim$ by saying $\pi_1 \sim \pi_2$ if there exists a piecewise-linear, nondecreasing, surjective, continuous function $\phi \colon [0, 1] \to [0, 1]$ such that $\pi_1 = \pi_2 \circ \phi$. A \defn{path} is an equivalence class $[\pi]$ such that $\pi(0) = 0$. For ease of notation, we will simply write a path by $\pi$.

Let $\pi_1$ and $\pi_2$ be paths.
Define the concatenation $\pi = \pi_1 \ast \pi_2$ by
\[
\pi(t) := \begin{cases} \pi_1(2t) & 0 \leq t \leq 1/2, \\ \pi_1(1) + \pi_2(2t-1) & 1/2 < t \leq 1. \end{cases}
\]
Next, consider a path $\pi$. Define $s_i \pi$ as the path given by $(s_i \pi)(t) = s_i\bigl(\pi(t)\bigr)$.

We now recall the crystal structure on the set of all paths from~\cite{L95,L95-2}.
Fix some $i \in I$ and path $\pi$. Define functions $H_{i,\pi} \colon [0, 1] \to \RR$ by
\[
\pi(t) = \sum_{i \in I} H_{i,\pi}(t) \Lambda_i,
\]
and so $H_{i,\pi}(t) = \inner{\pi(t)}{\alpha_i^{\vee}}$. Let $m_{i,\pi} := \min \{ H_{i,\pi}(t) \mid t \in [0,1] \}$ denote the minimal value of $H_{i,\pi}$.

If $-m_{i,\pi} < 1$, then define $e_i \pi = 0$, otherwise define $e_i \pi$ as the path given by
\[
(e_i \pi)(t) = \begin{cases}
\pi(t) & \text{if } t \leq t_0, \\
\pi(t_0) + s_i\bigl(\pi(t) - \pi(t_0)\bigr) & \text{if } t_0 < t \leq t_1, \\
\pi(t) + \alpha_i & \text{if } t_1 \leq t,
\end{cases}
\]
where
\begin{align*}
t_1 & := \min \{ t \in [0, 1] \mid H_{i,\pi}(t) = m_{i,\pi} \},
\\ t_0 & := \max \{ t \in [0, t_1] \mid H_{i,\pi}(t') \geq m_{i,\pi} + 1 \text{ for all } t' \in [0, t] \}.
\end{align*}
Next, if $H_{i,\pi}(1) - m_{i,\pi} < 1$, then define $f_i \pi = 0$, otherwise define $f_i \pi$ as the path given by
\[
(f_i \pi)(t) = \begin{cases}
\pi(t) & \text{if } t \leq \overline{t}_0, \\
\pi(\overline{t}_0) + s_i\bigl(\pi(t) - \pi(\overline{t}_0)\bigr) & \text{if } \overline{t}_0 < t \leq \overline{t}_1, \\
\pi(t) - \alpha_i & \text{if } \overline{t}_1 \leq t,
\end{cases}
\]
where
\begin{align*}
\overline{t}_0 & := \max \{ t \in [0, 1] \mid H_{i,\pi}(t) = m_{i, \pi} \},
\\ \overline{t}_1 & := \min \{ t \in [\overline{t}_0, 1] \mid H_{i,\pi}(t') \geq m_{i,\pi} + 1 \text{ for } t' \in [t, 1] \}.
\end{align*}

For the remaining crystal structure, we define
\begin{align*}
\varepsilon_i(\pi) & = -m_{i, \pi},
\\ \varphi_i(\pi) & = H_{i,\pi}(1) - m_{i, \pi},
\\ \wt(\pi) & = \pi(1).
\end{align*}

Let $\Pi(\lambda)$ denote the closure under the crystal operators of the path $\pi_{\lambda}(t) = t \lambda$.

\begin{thm}[{\cite{K96,L95,L95-2}}]
Let $\g$ be of symmetrizable type and $\lambda \in P^+$. Then
\[
\Pi(\lambda) \iso B(\lambda).
\]
Furthermore, $\Pi(\lambda)$ is the set of Lakshmibai--Seshadri (LS) paths of shape $\lambda$.
Moreover, $\Pi(\lambda) \otimes \Pi(\mu)$ is isomorphic to $\{ \xi \ast \pi \mid \pi \in \Pi(\lambda), \xi \in \Pi(\mu)\}$ by $\pi \otimes \xi \mapsto \xi \ast \pi$.
\end{thm}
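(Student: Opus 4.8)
The plan is to follow Littelmann's original strategy, treating the concatenation/tensor compatibility (the third assertion) as the structural engine that drives the identification $\Pi(\lambda) \iso B(\lambda)$. First I would verify that the set of all paths, equipped with the operators $e_i, f_i$ and the statistics $\varepsilon_i, \varphi_i, \wt$ given above, forms an abstract $U_q(\g)$-crystal. The content here is entirely in the one-parameter function $H_{i,\pi}(t) = \inner{\pi(t)}{\alpha_i^{\vee}}$: the cut points $t_0, t_1$ (resp. $\overline{t}_0, \overline{t}_1$) are defined so that $f_i$ reflects precisely the last descent of $H_{i,\pi}$ to its minimum and then translates by $-\alpha_i$, and a direct check shows $f_i$ and $e_i$ are mutually inverse bijections on their domains while $\wt$, $\varepsilon_i$, $\varphi_i$ transform as required by axioms (1)--(5). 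The highest weight path is $\pi_{\lambda}(t) = t\lambda$: since $H_{i,\pi_{\lambda}}(t) = t\inner{\lambda}{\alpha_i^{\vee}}$ is nondecreasing from $0$, one has $m_{i,\pi_{\lambda}} = 0$, hence $-m_{i,\pi_{\lambda}} = 0 < 1$ and $e_i \pi_{\lambda} = 0$ for all $i$. Thus $\Pi(\lambda)$ is by construction a connected highest weight crystal of highest weight $\lambda$, and it is regular because $\varepsilon_i, \varphi_i$ literally count the maximal number of applications of $e_i, f_i$.

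The key lemma is the concatenation theorem. For a concatenation $\eta \ast \sigma$, the graph of $H_{i, \eta \ast \sigma}$ is obtained by stacking the graph of $H_{i,\eta}$ with a copy of $H_{i,\sigma}$ raised by $H_{i,\eta}(1) = \inner{\wt(\eta)}{\alpha_i^{\vee}}$. Whether the (first) global minimum, and with it the active cut points of a root operator, lands in the $\eta$-part or the $\sigma$-part is governed exactly by comparing $m_{i,\eta}$ with $\inner{\wt(\eta)}{\alpha_i^{\vee}} + m_{i,\sigma}$, that is, by comparing $\varepsilon_i(\eta)$ with $\varepsilon_i(\sigma) - \inner{\wt(\eta)}{\alpha_i^{\vee}}$. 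A short case analysis then shows that $e_i$ and $f_i$ act on $\eta\ast\sigma$ by exactly the tensor product rule recorded in Section~\ref{sec:crystal_bg}, with the maxima in the formulas for $\varepsilon_i, \varphi_i$ reproduced by this height comparison. With the paper's (opposite-to-Kashiwara) tensor convention this yields that $\pi \otimes \xi \mapsto \xi \ast \pi$ is a crystal isomorphism $\Pi(\lambda) \otimes \Pi(\mu) \iso \{\xi \ast \pi \mid \pi \in \Pi(\lambda),\, \xi \in \Pi(\mu)\}$, which is the third assertion; in particular $\xi \ast \pi$ again lies in the closure of $\pi_{\lambda+\mu}$, giving closure of the model under tensoring.

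To identify $\Pi(\lambda)$ with $B(\lambda)$ I would match characters and invoke uniqueness. Applying the concatenation theorem repeatedly, one computes $\ch \Pi(\lambda)$ by a Weyl-group cancellation (``straightening'') argument and recovers the Weyl--Kac character of $V(\lambda)$; since $\Pi(\lambda)$ is a regular, connected highest weight crystal of highest weight $\lambda$ with the correct character, Kashiwara's characterization of the crystal basis forces $\Pi(\lambda) \iso B(\lambda)$. Alternatively one follows~\cite{K96} and embeds the path crystal directly into the crystal basis via the global basis, which bypasses the character computation. For the LS characterization I would show that the explicit combinatorial class of LS paths of shape $\lambda$ is stable under $e_i, f_i$ and that every LS path is obtained from $\pi_{\lambda}$ by a sequence of $f_i$'s; induction on the number of ``turns'' (the length of the defining chain in the Bruhat order) then identifies the closure of $\pi_{\lambda}$ with the full set of LS paths.

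The main obstacle is the identification step $\Pi(\lambda) \iso B(\lambda)$ itself, not the crystal bookkeeping: an abstract regular highest weight crystal is \emph{not} determined by its highest weight alone, so one genuinely needs either the global character computation---which in Kac--Moody generality requires the full cancellation argument built on the concatenation theorem---or Kashiwara's quantum-group input embedding paths into the crystal lattice. Everything else (the crystal axioms, the highest weight property, and the tensor rule) reduces to careful but routine analysis of the piecewise-linear height functions $H_{i,\pi}$.
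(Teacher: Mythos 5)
The paper offers no proof of this statement: it is imported verbatim from Littelmann's work and Kashiwara's \cite{K96,L95,L95-2} as background, so there is no internal argument to compare yours against. Judged as a reconstruction of the literature proof, your outline is essentially the standard one and the overall architecture is sound: verify the crystal axioms via the height functions $H_{i,\pi}$, check that $\pi_{\lambda}$ is highest weight, prove the concatenation/tensor compatibility by tracking where the global minimum of the stacked height function falls, and then leverage that compatibility to identify $\Pi(\lambda)$ with $B(\lambda)$. You also correctly account for the paper's reversed tensor convention in the map $\pi \otimes \xi \mapsto \xi \ast \pi$.

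One step is stated too loosely to stand as written: ``a regular, connected highest weight crystal of highest weight $\lambda$ with the correct character'' is \emph{not} forced to be $B(\lambda)$ --- this is exactly the pitfall you yourself flag a sentence later, so the hedge and the conclusion are in tension. The uniqueness input one actually invokes is either Joseph's (or Kashiwara's) theorem that a \emph{closed family} of highest weight crystals $\{\Pi(\lambda)\}_{\lambda \in P^+}$ --- meaning the connected component of the highest weight element of $\Pi(\lambda) \otimes \Pi(\mu)$ is $\Pi(\lambda+\mu)$, which is precisely what your concatenation lemma supplies --- is unique, or else Kashiwara's direct embedding of the path crystal into the crystal lattice as in \cite{K96}. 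Character matching is a consequence, not the mechanism. If you replace the character-plus-regularity appeal with the closed-family uniqueness statement (or commit fully to the \cite{K96} route), the sketch is a correct account of the known proof.
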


\begin{remark}
The reversal of the concatenation is due to our order of the tensor product. See Remark~\ref{rem:tensor_order}.
\end{remark}

Furthermore, we note that the contragredient dual path $\pi^{\vee}$ is given explicitly by
\begin{equation}
    \label{eqn:dual LS path}
\pi^{\vee}(t) = \pi(1 - t) - \pi(1).
\end{equation}
Moreover, we have $(f_i \pi)^{\vee} = e_i(\pi^{\vee})$.
This gives the following proposition.

\begin{prop}
\label{prop:dual_littelmann_path}
We have $\Pi(-\lambda) \iso \Pi(\lambda)^{\vee}$ given by $\pi \mapsto \pi^{\vee}$.
\end{prop}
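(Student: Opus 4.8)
The plan is to show that the path-reversal map $\Theta\colon \pi \mapsto \pi^{\vee}$, with $\pi^{\vee}(t) = \pi(1-t) - \pi(1)$, restricts to a dual crystal isomorphism from $\Pi(\lambda)$ onto $\Pi(-\lambda)$; equivalently, that the induced map $\Phi\colon \pi^{\vee} \mapsto \pi^{\vee}$ is a genuine crystal isomorphism $\Pi(\lambda)^{\vee} \iso \Pi(-\lambda)$. First I would record two structural facts about $\Theta$ on the set of all paths. It is an involution: substituting into the formula and using $\pi(0) = 0$ gives $(\pi^{\vee})^{\vee}(t) = \pi(t)$. And it sends generator to generator: for $\pi_{\lambda}(t) = t\lambda$ one computes $\pi_{\lambda}^{\vee}(t) = (1-t)\lambda - \lambda = -t\lambda = \pi_{-\lambda}(t)$.

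Next I would verify the local compatibilities. From $H_{i,\pi^{\vee}}(t) = \inner{\pi^{\vee}(t)}{\alpha_i^{\vee}} = H_{i,\pi}(1-t) - H_{i,\pi}(1)$ one reads off $m_{i,\pi^{\vee}} = m_{i,\pi} - H_{i,\pi}(1)$ and $H_{i,\pi^{\vee}}(1) = -H_{i,\pi}(1)$, whence $\varepsilon_i(\pi^{\vee}) = \varphi_i(\pi)$, $\varphi_i(\pi^{\vee}) = \varepsilon_i(\pi)$, and $\wt(\pi^{\vee}) = \pi^{\vee}(1) = -\wt(\pi)$. For the operators, the identity $(f_i\pi)^{\vee} = e_i(\pi^{\vee})$ stated just above the proposition supplies one intertwining relation; applying it with $\pi$ replaced by $\pi^{\vee}$ and then taking $\vee$ of both sides (using the involution) yields the companion relation $(e_i\pi)^{\vee} = f_i(\pi^{\vee})$. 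I would be careful to note that these equalities hold including the degenerate cases, i.e. $f_i\pi = 0$ exactly when $e_i(\pi^{\vee}) = 0$, which is forced by the matching of $\varepsilon_i$ with $\varphi_i$ computed above. Taken together, these are precisely the defining relations of a dual crystal isomorphism for $\Theta$, equivalently the crystal-morphism axioms for $\Phi$.

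Finally I would establish that $\Theta$ restricts to a bijection $\Pi(\lambda) \to \Pi(-\lambda)$. Since $\Pi(\lambda)$ is by definition the closure of $\pi_{\lambda}$ under the operators $e_i, f_i$, every element has the form $x_{i_1} \cdots x_{i_k}\, \pi_{\lambda}$ with each $x$ an $e_i$ or an $f_i$; applying $\Theta$ and the intertwining relations converts this into $\overline{x}_{i_1} \cdots \overline{x}_{i_k}\, \pi_{-\lambda}$, where $\overline{e_i} = f_i$ and $\overline{f_i} = e_i$. Swapping $e_i \leftrightarrow f_i$ merely reindexes the collection of operator words, so the image of $\Theta$ is exactly the closure of $\pi_{-\lambda}$, namely $\Pi(-\lambda)$. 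As $\Theta$ is an involution on all paths it is injective, hence a bijection onto $\Pi(-\lambda)$, which completes the proof. The only genuinely delicate point, and the one I would treat most carefully, is this last closure bookkeeping: one must check that $\Theta$ neither carries an element of $\Pi(\lambda)$ outside $\Pi(-\lambda)$ nor misses any element of $\Pi(-\lambda)$, and this is exactly what the operator-swap together with the generator identity $\Theta(\pi_{\lambda}) = \pi_{-\lambda}$ guarantees.
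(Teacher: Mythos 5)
Your argument is correct and follows the same route the paper takes: the paper simply records the formula $\pi^{\vee}(t) = \pi(1-t) - \pi(1)$ and the intertwining relation $(f_i\pi)^{\vee} = e_i(\pi^{\vee})$ and declares the proposition to follow, which is exactly the skeleton you flesh out. Your additional verifications (the involution property, $\pi_{\lambda}^{\vee} = \pi_{-\lambda}$, the matching of $\varepsilon_i$, $\varphi_i$, $\wt$, and the closure bookkeeping) are all accurate and supply the details the paper leaves implicit.
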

For $\lambda \in P^+$ and $\g$ of finite type, the lowest weight element of $\Pi(\lambda)$ is precisely $\pi_{w_0 \lambda}$, where $w_0$ is the longest element of $W$.
Hence, we have $\Pi(-w_0\lambda) = \Pi(\lambda)^{\vee}$ as sets~\cite{L95,L95-2}.

Now we recall the construction of $B(\infty)$ using the (modified) Littelmann paths from~\cite{LZ11}.
An \defn{extended path} is an equivalence class $\pi \colon [0, \infty) \to \hh^*_{\RR}$, with the same equivalence relation $\sim$ above, that eventually results in the direction $\rho$; that is, there exists a $T$ such that for all $t > T$, we have $\pi'(t) = \rho$, where $\pi' = \frac{d\pi}{dt}$.

Define $\Pi(\infty)$ as the closure under the crystal operators of $\pi_{\infty}(t) = t \rho$. For $\Pi(\infty)$, we need to modify the definition of weight and $\varphi_i$ to be
\begin{align*}
\wt(\pi) & = \pi(T) - T\rho,
\\ \varphi_i(\pi) & = \varepsilon_i(\pi) + \inner{\wt(\pi)}{\alpha_i^{\vee}} = -m_{i,\pi} + H_{i,\pi}(T) - T,
\end{align*}
where $T = \min \{ t \mid \pi'(\widetilde{t}) = \rho, \widetilde{t} \geq t \}$, whereas $\varepsilon_i(\pi) = -m_{i, \pi}$ as for $\Pi(\lambda)$.
For the definition of the crystal operators, we replace the intervals $[0, 1]$ with $[0, \infty)$ and drop the condition for $f_i \pi = 0$ (alternatively, it is never satisfied because $\lim_{t \to \infty} H_{i,\pi}(t) - m_{i,\pi} = \infty$).

\begin{thm}[{\cite{LZ11}}]
\label{thm:infinty_LS_paths}
Let $\g$ be of symmetrizable type. Then
\[
\Pi(\infty) \iso B(\infty).
\]
\end{thm}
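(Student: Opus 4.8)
The plan is to realize $\Pi(\infty)$ as the direct limit $\varinjlim_{k} T_{-k\rho} \otimes \Pi(k\rho)$ and then invoke Theorem~\ref{thm:direct_limit_construction} together with the isomorphisms $\Pi(k\rho) \iso B(k\rho)$. Since $\{k\rho\}_{k \geq 0}$ is cofinal in $(P^+, \leq)$ --- for any $\lambda \in P^+$ we have $k\rho \geq \lambda$ once $k \geq \max_{i} \inner{\lambda}{\alpha_i^\vee}$ --- the direct limit over this subfamily agrees with the full direct limit, so it suffices to build a compatible system of maps into the levels $k\rho$.

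For an extended path $\pi$ with $\pi'(t) = \rho$ for $t > T$, and for any $k > T$, I would define the truncation $\theta_k(\pi) := t_{-k\rho} \otimes \pi_{\leq k}$, where $\pi_{\leq k}(t) := \pi(kt)$ for $t \in [0,1]$. A direct computation using $\pi(k) = \pi(T) + (k-T)\rho$ and $\inner{\rho}{\alpha_i^\vee} = 1$ shows that $\theta_k$ preserves all statistics: $\wt(\theta_k(\pi)) = -k\rho + \pi(k) = \pi(T) - T\rho = \wt(\pi)$, and, because $H_{i,\pi}$ is strictly increasing past $T$ so that $m_{i,\pi_{\leq k}} = m_{i,\pi}$, one gets $\varepsilon_i(\theta_k(\pi)) = \varepsilon_i(\pi)$ and $\varphi_i(\theta_k(\pi)) = \varphi_i(\pi)$, matching the modified definitions on $\Pi(\infty)$.

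The heart of the argument is to show that $\theta_k$ commutes with the crystal operators once $k$ is large. Here one uses that the Littelmann operators modify a path only on a bounded initial segment: the times $\overline{t}_0, \overline{t}_1$ (resp.\ $t_0, t_1$) defining $f_i$ (resp.\ $e_i$) lie in a bounded range, and beyond $\overline{t}_1$ the path $f_i\pi$ differs from $\pi$ by the constant $-\alpha_i$, so the tail direction $\rho$ is unchanged. Hence for $k$ larger than all relevant recovery times, $(f_i \pi)_{\leq k} = f_i(\pi_{\leq k})$ and likewise for $e_i$. On the tensor side, since $\varepsilon_i(t_{-k\rho}) = -\infty < \varphi_i(\pi_{\leq k})$, the tensor rule places the action on the right factor, $f_i(t_{-k\rho}\otimes \pi_{\leq k}) = t_{-k\rho}\otimes f_i \pi_{\leq k}$, and this is nonzero for $k$ large, matching $f_i \pi \neq 0$ on $\Pi(\infty)$; similarly $e_i(t_{-k\rho}\otimes\pi_{\leq k}) = t_{-k\rho}\otimes e_i\pi_{\leq k}$ vanishes exactly when $\varepsilon_i(\pi) = 0$, i.e.\ when $e_i\pi = 0$. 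Starting from $\theta_k(\pi_\infty) = t_{-k\rho}\otimes \pi_{k\rho} = t_{-k\rho}\otimes u_{k\rho}$ and applying $f_i$'s, induction then gives $\pi_{\leq k} \in \Pi(k\rho)$ for every $\pi$ and every sufficiently large $k$.

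Finally, I would check that the $\theta_k$ are compatible with the embeddings $\psi_{k'\rho,k\rho}$: both send the highest weight element to the highest weight element and both commute with the $f_i$, so $\psi_{k'\rho,k\rho}\circ\theta_k = \theta_{k'}$ on the generated crystal by induction. Thus $\Theta(\pi) := \psi_{(k)}(\theta_k(\pi))$ (for $k \gg 0$) is a well-defined crystal morphism $\Pi(\infty)\to B(\infty) = \varinjlim_k T_{-k\rho}\otimes B(k\rho)$ preserving $\wt,\varepsilon_i,\varphi_i$ and commuting with all $e_i,f_i$, sending $\pi_\infty \mapsto u_\infty$. Surjectivity is immediate since $B(\infty)$ is $f_i$-generated from $u_\infty$ and the $f_i$ never vanish on either side; injectivity follows by applying the projection $p_{k\rho}$, since $\Theta(\pi) = \Theta(\pi')$ forces $\pi_{\leq k} = \pi'_{\leq k}$ for $k \gg 0$, whence $\pi = \pi'$ because both have tail direction $\rho$. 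I expect the main obstacle to be the commutation step: verifying carefully that ``$k$ large enough'' can be chosen (depending on $\pi$ and the finitely many active $i$) so that truncation genuinely intertwines the path operators with the tensor-product operators, and that $\pi_{\leq k}$ really lands in $\Pi(k\rho)$ rather than merely in the set of all paths.
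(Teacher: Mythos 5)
The paper does not actually prove Theorem~\ref{thm:infinty_LS_paths}: it is quoted from~\cite{LZ11}, and according to the paper's own introduction Li and Zhang establish it by a characterization of $B(\infty)$ rather than by Kashiwara's direct limit. Your proposal is therefore a genuinely different route, and it is essentially sound: you realize $\Pi(\infty)$ as $\varinjlim_k T_{-k\rho}\otimes\Pi(k\rho)$ via the truncation $\theta_k(\pi) = t_{-k\rho}\otimes\pi_{\leq k}$ and then invoke Theorem~\ref{thm:direct_limit_construction} together with $\Pi(k\rho)\iso B(k\rho)$. Your bookkeeping checks out: $\wt$, $\varepsilon_i$, $\varphi_i$ are preserved (the $-k$ coming from $\inner{\alpha_i^{\vee}}{\wt(t_{-k\rho})}$ exactly cancels the $+(k-T)$ from the tail of slope $\rho$), the times $t_0,t_1,\overline{t}_0,\overline{t}_1$ are bounded because $H_{i,\pi}$ increases with slope $1$ past $T$, and the tensor rule routes the operators onto the right factor since $\varepsilon_i(t_{-k\rho})=-\infty$. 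Notably, this is precisely the strategy the paper itself deploys for the alcove path model in Section~\ref{sec:results} (the maps $S^{\pr}_{k\rho}$, $S^{\inc}_{-k\rho}$ and Lemmas~\ref{lemma:shift_morphism_inf}--\ref{lemma:alcove_direct_limit} are the alcove analogues of your $\theta_k$ and its compatibilities), so your argument transplants the paper's Theorem~\ref{thm:infinite_isomorphism} to the path setting. What each approach buys: yours is self-contained modulo the classical $\Pi(\lambda)\iso B(\lambda)$ and makes the inclusion $B(k\rho)\hookrightarrow B(\infty)$ concrete as ``forget the tail''; the characterization-based route of~\cite{LZ11} avoids having to verify the cofinality and compatibility of a directed system but requires checking an axiomatic list instead. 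The only points deserving more care in a written version are the ones you flag yourself: the constant ``$k$ large enough'' must be chosen uniformly over the finite $f$-string producing $\pi$ (so that every intermediate truncation is nonzero and lies in $\Pi(k\rho)$), and the compatibility $\psi_{k'\rho,k\rho}\circ\theta_k=\theta_{k'}$ should be proved by induction along such a string, using that the embeddings commute with $f_i$ whenever $f_i$ does not vanish at the lower level. Neither is a gap, just detail to be written out.
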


\subsection{Continuous limit}
\label{sec:continuous_limit}


We recall the dual crystal isomorphism $\varpi_{\lambda} \colon \Al(\lambda) \to \Pi(-\lambda)$ from~\cite[Thm.~9.4]{LP08}.

Consider an admissible set $J = \{(\zeta_1, \ell_1) < \cdots < (\zeta_p, \ell_p)\} \in \Al(\lambda)$.
Let $R_j = s_{\zeta_j}$ and
let $t_j = \ell_j / \inner{\lambda}{\zeta_j^{\vee}}$, and note that $t_1 \leq t_2 \leq \cdots \leq t_p$.
Next define the set
\[
\{ 0 = a_0 < a_1 < a_2 < \cdots < a_q \} := \{0\} \cup \{t_1, \dotsc, t_p\},
\]
which may be of smaller size due to repetition.
For $0 \leq d \leq q$, define $\mu_d := -R_1 \dotsm R_{n_d}(\lambda)$, where $n_d = \max\{ 1 \leq i \leq p \mid a_d = t_i \}$ and we consider $\mu_0 = -\lambda$ if there is no $i$ such that $t_i = a_d$.
Now, we define $\varpi_{\lambda}(J)$ as the Littelmann path $\pi \colon [0, 1] \to \hh_{\RR}$ given by
\begin{equation}
\label{eq:explicit_littelmann_path}
\pi(t) = (t - a_d) \mu_d + \sum_{m=0}^{d-1} (a_{m+1} - a_m) \mu_m,
\end{equation}
for $a_d \leq t \leq a_{d+1}$ and all $0 \leq d \leq q$ with $a_{q+1} = 1$.

\begin{thm}[{\cite{LP08}}]
\label{thm:alcove_to_path}
Let $\g$ be of symmetrizable type. The map $\varpi_{\lambda} \colon \Al(\lambda) \to \Pi(-\lambda)$ is a dual crystal isomorphism.
\end{thm}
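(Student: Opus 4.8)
The plan is to reduce the statement to a few checks by exploiting that both sides are known models of $B(\lambda)$ up to duality. By Theorem~\ref{thm:isomorphic_highest_weight}, $\Al(\lambda) \iso B(\lambda)$ is a regular connected crystal generated from the empty set $\emptyset$ under the operators $f_i$; by Proposition~\ref{prop:dual_littelmann_path}, $\Pi(-\lambda) \iso \Pi(\lambda)^{\vee}$ is a regular connected crystal, and its generator $\pi_{-\lambda}$ is a lowest weight element (it is killed by every $f_i$), so $\Pi(-\lambda)$ is generated from $\pi_{-\lambda}$ under the $e_i$. For such crystals, a dual crystal morphism that carries generator to generator and reverses weights is automatically a dual isomorphism: passing to the contragredient turns $\varpi_{\lambda}$ into an honest crystal morphism $\Al(\lambda) \to \Pi(-\lambda)^{\vee} \iso \Pi(\lambda)$ of regular connected crystals sending the highest weight $\emptyset$ to the highest weight of $\Pi(\lambda)$, and any such morphism is the unique isomorphism (whence the $\varepsilon_i$, $\varphi_i$ identities and bijectivity follow). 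It therefore suffices to verify: (i) $\varpi_{\lambda}(\emptyset) = \pi_{-\lambda}$; (ii) $\wt(\varpi_{\lambda}(J)) = -\wt(J)$ for all $J$; and (iii) $f_i\varpi_{\lambda}(J) = \varpi_{\lambda}(e_i J)$ and $e_i\varpi_{\lambda}(J) = \varpi_{\lambda}(f_i J)$ for all $i \in I$, $J \in \Al(\lambda)$. Well-definedness of $\varpi_{\lambda}$ as a map into $\Pi(-\lambda)$ is then automatic, since $\varpi_{\lambda}(f_{i_1}\cdots f_{i_\ell}\emptyset) = e_{i_1}\cdots e_{i_\ell}(\pi_{-\lambda})$ lies in $\Pi(-\lambda)$.

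Claims (i) and (ii) are direct computations. For (i), when $J = \emptyset$ there are no turning points, so in \eqref{eq:explicit_littelmann_path} we have $q = 0$ and $\mu_0 = -\lambda$, giving $\pi(t) = -t\lambda = \pi_{-\lambda}(t)$. For (ii), I would compute $\wt(\varpi_{\lambda}(J)) = \pi(1) = \sum_{d=0}^{q}(a_{d+1}-a_d)\mu_d$ and compare it with $-\wt(J) = \widehat{r}_{j_1}\cdots\widehat{r}_{j_p}(-\lambda)$ from \eqref{eq:alcove_weight}. A summation by parts rewrites $\pi(1) = \mu_q - \sum_{d\ge 1} a_d(\mu_d - \mu_{d-1})$; using $a_d = \ell_{n_d}/\inner{\lambda}{\zeta_{n_d}^{\vee}}$ and the identity $\lambda - s_{\zeta}\lambda = \inner{\lambda}{\zeta^{\vee}}\zeta$ turns each jump into $a_d(\mu_d-\mu_{d-1}) = \ell_{n_d}\, R_1\cdots R_{n_d-1}(\zeta_{n_d})$. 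On the other hand, expanding the affine reflections $\widehat{r}_{j_k} = s_{\zeta_k,-\ell_k}$, which act by $x \mapsto R_k x - \ell_k \zeta_k$, gives $\widehat{r}_{j_1}\cdots\widehat{r}_{j_p}(-\lambda) = -R_1\cdots R_p(\lambda) - \sum_{k}\ell_k R_1\cdots R_{k-1}(\zeta_k)$, which matches $\pi(1)$ term by term.

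The heart of the matter, and the main obstacle, is (iii): one must match the discrete signature rule defining the operators on $\Al(\lambda)$ with the minimum-structure of the height function $H_{i,\pi}(t) = \inner{\pi(t)}{\alpha_i^{\vee}}$ that governs the Littelmann operators on $\Pi(-\lambda)$. The key point is that on the $d$-th linear piece of $\pi = \varpi_{\lambda}(J)$ the slope of $H_{i,\pi}$ is $\inner{\mu_d}{\alpha_i^{\vee}}$, and this slope can only change at a turning point arising from an index $k$ with $\gamma_k = \pm\alpha_i$, i.e.\ $k \in I_{\alpha_i}$; moreover $\sgn(\gamma_k)$ records whether $H_{i,\pi}$ is increasing or decreasing there. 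I would show that the sign word \eqref{eqn:crystal_op_word} read off from the unfolded positions $I_{\alpha_i}\setminus J$ reproduces the up/down pattern of $H_{i,\pi}$, so that cancelling $-+$ pairs corresponds to discarding the valley--peak pairs of $H_{i,\pi}$ that fail to reach its global minimum $m_{i,\pi}$. Consequently the leftmost surviving $-$ used in \eqref{eqn:e_op_b} marks the turning point where $H_{i,\pi}$ \emph{last} attains $m_{i,\pi}$ (Littelmann's parameter $\overline{t}_0$ for $f_i$), while the rightmost surviving $+$ used in \eqref{eqn:f_op_b} marks where it \emph{first} attains $m_{i,\pi}$ (the parameter $t_1$ for $e_i$); the left--right reversal between the two descriptions is exactly the time reversal in the dual-path formula~\eqref{eqn:dual LS path}. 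It then remains to verify that inserting or deleting the folding position at the distinguished index reflects every subsequent direction $\mu_d$ by $s_{\alpha_i}$ and translates the tail of $\pi$ by $\mp\alpha_i$, which is precisely the Littelmann reflection of $\pi$ on the interval $[\overline{t}_0,\overline{t}_1]$, respectively $[t_0,t_1]$. The delicate bookkeeping lies in the degenerate cases---when the reduced $i$-signature has no $+$ or no $-$ (to be reconciled with $e_i\pi = 0$, $f_i\pi = 0$ and the threshold conditions of \eqref{eqn:e_op_a} and \eqref{eqn:f_op_a}), when $A = \emptyset$, and when several $t_j$ coincide so that linear segments collapse---each of which must be matched against the corresponding degeneration of the minimum-structure of $H_{i,\pi}$. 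Once (iii) holds, the rigidity argument of the first paragraph completes the proof.
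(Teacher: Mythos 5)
The paper does not actually prove this theorem: it is imported verbatim from \cite{LP08} (Theorem~9.4 there), so there is no internal proof to compare your attempt against; the closest the paper comes is Appendix~\ref{sec:ame}, which translates the signature rule into the piecewise-linear function $g_{\alpha_i}$ of \cite{LP08}. Judged on its own terms, your skeleton is the right one. The rigidity reduction is sound: once $\varpi_{\lambda}$ is known to be a strict dual morphism sending $\emptyset$ to $\pi_{-\lambda}$, connectedness of $\Al(\lambda) \iso B(\lambda)$ and of $\Pi(-\lambda) \iso \Pi(\lambda)^{\vee}$ forces bijectivity, and regularity then gives the $\varepsilon_i$, $\varphi_i$ identities. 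Your checks (i) and (ii) are correct; the only caveat in (ii) is that when several $t_j$ coincide the jump $a_d(\mu_d - \mu_{d-1})$ is the sum $\sum_k \ell_k R_1 \cdots R_{k-1}(\zeta_k)$ over all $k$ with $t_k = a_d$, not a single term, though the telescoping identity you use still closes.

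The genuine gap is step (iii), which is where essentially all of the content of the theorem lives and which you only outline. What must actually be proved is that the reduced $i$-signature of $J$ computes the valley structure of $H_{i,\pi}$ for $\pi = \varpi_{\lambda}(J)$, and that toggling the distinguished folding position realizes exactly the Littelmann reflection on $[\overline{t}_0, \overline{t}_1]$ (resp.\ $[t_0, t_1]$); none of this is carried out, and the degenerate cases you flag (empty signature, $A = \emptyset$, collapsing segments, the threshold conditions in \eqref{eqn:e_op_a} and \eqref{eqn:f_op_a}) are precisely where such arguments fail if done carelessly. One smaller correction: the matching of extremes needs no time reversal. In terms of $g_{\alpha_i}$, the operator $f_i J$ acts just before the \emph{first} attainment of the maximum and $e_i J$ at the \emph{last}; after the sign flip $\wt(\varpi_{\lambda}(J)) = -\wt(J)$ these line up directly with $t_1$ (first attainment of the minimum of $H_{i,\pi}$, used by $e_i \pi$) and $\overline{t}_0$ (last attainment, used by $f_i \pi$). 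Since $\varpi_{\lambda}$ lands in $\Pi(-\lambda)$ rather than $\Pi(\lambda)^{\vee}$ and the turning times $t_j$ increase with $j$, your appeal to ``the time reversal in the dual-path formula~\eqref{eqn:dual LS path}'' is a red herring: the duality enters only through the sign of the weights, not through reversing the parameter.
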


Indeed, the map $\varpi_{\lambda}$ is dual in the sense that the map $\varpi^{\vee}_{\lambda} \colon \Al(\lambda) \to \Pi(-\lambda)^{\vee}$ given by $\varpi^{\vee}_{\lambda}(J) := \varpi_{\lambda}(J)^{\vee}$ is a crystal isomorphism.
From Proposition~\ref{prop:dual_littelmann_path}, we can consider $\varpi_{\lambda}^{\vee}$ as a crystal isomorphism $\Al(\lambda) \iso \Pi(\lambda)$.

We can also roughly describe the map $\varpi_{\lambda}$ geometrically as follows.
Define $F$ to be the set of alcoves that contain the origin, and we note that we can tile by $Q$ translates of $F$ (\textit{i.e.}, $F$ is a fundamental domain with respect to translation by elements in $Q$).
For example, in type $A_2$, these are the 6 chambers that form a hexagon and are in bijection with elements of the Weyl group $S_3$.
We then construct the LS path as a slight perturbation of the path corresponding to a folded alcove path and contracting each translate of $F$ to its corresponding element in $Q$.

\subsection{Contragredient dual alcove paths}
\label{sec:alcove_reversed}

We recall an equivalent formulation of the alcove path model from~\cite{Lenart12}.
\begin{dfn}
A sequence $\Gamma^{\vee} = (\beta_{\ind^{\vee}})_{\ind^{\vee} \in \IndSet^{\vee}}$ with $\IndSet^{\vee} = \{\ind^{\vee}_m < \cdots < \ind^{\vee}_2 < \ind^{\vee}_1 \}$ (with possibly $m = \infty$) is a \defn{dual $\lambda$-chain} if it corresponds to a total ordering on $R_{\lambda}$\footnote{Note that the bijection is given by now reading right-to-left.} and for any $\alpha,\beta,\gamma \in \Phi^-$ such that $\alpha \neq \beta$ and $\gamma^{\vee} = \alpha^{\vee} + p \beta^{\vee}$, for some $p \in \ZZ$, then
\begin{equation}
\label{eq:chain_cond_dual}
\check{N}^{\Gamma^{\vee}}_{\ind^{\vee}}(\gamma) = \check{N}^{\Gamma^{\vee}}_{\ind^{\vee}}(\alpha) + p \check{N}^{\Gamma^{\vee}}_{\ind^{\vee}}(\beta),
\end{equation}
where $\check{N}^{\Gamma^{\vee}}_{\ind}(\delta) := \lvert \{ \ind' \geq \ind \mid \beta_{\ind'} = \delta \} \rvert$,
for all $\ind^{\vee} \in \IndSet^{\vee}$ such that $\beta = \beta_{\ind}$.
\end{dfn}
For a $\lambda$-chain $\Gamma = (\beta_{\ind})_{\ind \in \IndSet}$, the corresponding dual $\lambda$-chain is given by $\Gamma^{\vee} = (-\beta_{\ind})_{\ind \in \IndSet^{\vee}}$, where $\IndSet^{\vee}$ is $\IndSet$ in the reverse order, which is also the ``$(-\lambda)$-chain.''
In terms of alcove walks, for the path
\[
A_{\circ} = A_0 \rootarrow{-\beta_1} A_1 \rootarrow{-\beta_2} \cdots \rootarrow{-\beta_m} A_m = A_{-\lambda},
\]
the dual path is given by
\[
A_{\circ} = A'_0 \rootarrow{\beta_m} A'_1 \rootarrow{\beta_{m-1}} \cdots \rootarrow{\beta_1} A'_m = A_{\lambda},
\]
where $A'_i = A_{m-i} + \lambda$.

We reindex the dual $\lambda$-chain by the natural isomorphism $\IndSet \leftrightarrow \IndSet^{\vee}$ so that we can write $\Gamma^{\vee} = (\beta_{\ind})_{\ind \in \IndSet}$ to simplify our notation.
A subset $J = \{ j_1 < j_{2} < \cdots < j_p \} \subset \{1,2, \ldots, m\} $ is \defn{dual admissible} if there exists some $w\in W$ with
\[
w \gtrdot wr_{j_1} \gtrdot wr_{j_1} r_{j_{2}} \gtrdot \ldots \gtrdot
wr_{j_1}r_{j_{2}} \cdots r_{j_p} = 1,
\]
cf. Equation~\eqref{eqn:bruhat_path}.
We set 
\[
     \tau(J) = r_{j_1}r_{j_2} \dotsm r_{j_p}\,,
     \qquad
     \iota(J) = w = \tau(J)^{-1}  \,.
\]
As before we have $\Gamma^{\vee}(J) = (\gamma_1, \gamma_{2}, \dotsc, \gamma_q)$, where
\[
\gamma_{\ind} = wr_{j_1} r_{j_{2}} \cdots r_{j_t} (\beta_{\ind}) = r_{j_{p}} \cdots r_{j_{t+1}}(\beta_{\ind})
\]
with $t = \max\{a \mid j_a \leq \ind \}$.

\begin{remark}
The sequence $\Gamma^{\vee}(J)$ in this section can be obtained by reversing the sequence $\Gamma_{\lambda}(J)$ from Section~\ref{section:alcove_model}.
This construction is analogous to taking the contragredient dual of the Littelmann path $\pi^{\vee}$, cf. Equation~\eqref{eqn:dual LS path}.
\end{remark}

Let $\widetilde{\ell}_i = \inner{\lambda}{\beta_i^{\vee}} - \ell_i$, and 
let $\widehat{r}'_i = s_{\beta_i, \widetilde{\ell}_i}$ then
\begin{equation*}
\wt(J) = \iota(J)\widehat{r}'_{j_1} \dotsm \widehat{r}'_{j_p} (\lambda).
\end{equation*}

Let $\Al^{\vee}(\Gamma)$ be defined as the set of subsets $J \subset \{1, \dotsc, m\}$ which are dual admissible with respect to $\Gamma^{\vee}$.
For brevity, we denote $\Al^{\vee}(\lambda) := \Al^{\vee}(\Gamma_{\lambda})$.
In this case, we define crystal operators $e_i$ and $f_i$ by Equations~\eqref{eqn:e_op} and~\eqref{eqn:f_op} respectively, using the sets $I_{\alpha_i}$ and $I_{\alpha_i} \setminus J$ as defined in Equation~\eqref{eqn:root_op_sets}, and the word
\[
    \sgn(-\gamma_{d_1})\sgn(-\gamma_{d_2}) \dotsm \sgn(-\gamma_{d_q}),
\] 
instead of Equation~\eqref{eqn:crystal_op_word}.
Thus, we have the following.

\begin{prop}
\label{prop:reversing_alcoves}
We have
\[
\Al^{\vee}(\Gamma) \iso \Al(\Gamma)^{\vee},
\]
where the crystal isomorphism is given by $J \mapsto J$.
\end{prop}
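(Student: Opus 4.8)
The plan is to verify directly that the bijection $J \mapsto J^\vee$ exhibits $\Al^\vee(\Gamma)$ as the contragredient dual $\Al(\Gamma)^\vee$. Concretely, I would check three things: that the underlying collections of subsets of $\Al^\vee(\Gamma)$ and $\Al(\Gamma)$ coincide (so that $J \mapsto J^\vee$ even makes sense), that $\wt^\vee(J) = -\wt(J)$, and that the dual operators satisfy $f_i J = e_i J$ and $e_i J = f_i J$, where the left-hand side is computed in $\Al^\vee(\Gamma)$ and the right-hand side in $\Al(\Gamma)$. Since $\Al(\Gamma)$ is already a $U_q(\g)$-crystal by Theorem~\ref{thm:isomorphic_highest_weight} and its $\varepsilon_i,\varphi_i$ are fixed by regularity, matching the weight and the two families of operators forces $\varepsilon_i^\vee(J) = \varphi_i(J)$ and $\varphi_i^\vee(J) = \varepsilon_i(J)$, and hence realizes exactly the relations defining $\Al(\Gamma)^\vee$.

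First I would establish equality of the underlying sets. Using $s_{-\beta} = s_\beta$, the reflections attached to $\Gamma^\vee$ agree with those of $\Gamma$, so substituting the forced element $w = \iota(J) = \tau^\vee(J)^{-1}$ into the defining chain of dual admissibility makes the products telescope, and the decreasing chain $w \gtrdot w r_{j_1} \gtrdot \cdots \gtrdot 1$ becomes precisely the saturated chain of Equation~\eqref{eqn:bruhat_path} read in reverse. As the reverse of a saturated chain in Bruhat order is again saturated, $J$ is dual admissible for $\Gamma^\vee$ if and only if it is admissible for $\Gamma$. Along the way this identifies $\iota^\vee(J) = \tau(J)$ and $\tau^\vee(J) = \tau(J)^{-1}$, which I will use in the operator step. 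The weight identity $\wt^\vee(J) = -\wt(J)$ then follows by a direct affine computation: substituting $\widetilde{\ell}_i = \inner{\lambda}{\beta_i^\vee} - \ell_i$ and $\widehat{r}'_i = s_{\beta_i,\widetilde{\ell}_i}$ into the dual weight formula and comparing with Equation~\eqref{eq:alcove_weight}, the contribution of $\iota^\vee(J) = \tau(J)$ together with the shifted affine reflections produces exactly the negation.

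The operators are the crux. By the remark following the definition of $\Gamma^\vee(J)$, the sequence $\Gamma^\vee(J)$ is the reversal of $\Gamma_\lambda(J)$, and the dual model replaces the word~\eqref{eqn:crystal_op_word} by $\sgn(-\gamma_{d_1}) \dotsm \sgn(-\gamma_{d_q})$; hence the dual $i$-signature is obtained from the original one by reversing the order of its letters and exchanging $+ \leftrightarrow -$. Under this transformation the cancellation of $-\,+$ pairs is preserved, so ``the rightmost $+$'' selected by $f_i$ in~\eqref{eqn:f_op_b} is carried to ``the leftmost $-$'' selected by $e_i$ in~\eqref{eqn:e_op_b}, with the auxiliary set $A$ and the choices $\min A$ and $\max A$ interchanged; this is exactly the purpose of the boundary case $A = \emptyset$ built into~\eqref{eqn:e_op_b} and~\eqref{eqn:f_op_b}. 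For the remaining branches the conditions match because $\iota^\vee(J) = \tau(J)$ and $\iota^\vee(J)\tau^\vee(J) = 1$: the test $\inner{\iota^\vee(J)(\rho)}{\alpha_i^\vee} < 0$ governing $f_i$ in~\eqref{eqn:f_op_a} becomes the test $\inner{\iota(J)\tau(J)(\rho)}{\alpha_i^\vee} < 0$ governing $e_i$ in~\eqref{eqn:e_op_a}, while the vacuous branch of the dual $e_i$ matches the vacuous branch of the original $f_i$ recorded in Remark~\ref{remark:simplify_dual_operators}. Assembling these gives $f_i J = e_i J$ and $e_i J = f_i J$ across the two models, so $J \mapsto J^\vee$ intertwines all of the crystal data and is the asserted isomorphism $\Al^\vee(\Gamma) \iso \Al(\Gamma)^\vee$.

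I expect the operator step to be the main obstacle: one must track, through the reversal-and-negation of the signature, not merely which letter is selected but which element of $J \cap I_{\alpha_i}$ is removed and which is adjoined, and then confirm that the $\min/\max$ selections together with the $A = \emptyset$ conventions line up on the nose. The set equality and the weight identity are comparatively routine Bruhat-order and affine-reflection calculations, but the signature bookkeeping in the edge cases is where the argument has to be made with care.
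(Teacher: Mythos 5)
Your proposal is correct and follows exactly the route the paper intends: the paper offers no written proof of Proposition~\ref{prop:reversing_alcoves} (it is asserted as an immediate consequence of how $\Al^{\vee}(\Gamma)$ is built by reversing the chain, negating the roots, and swapping the signature conventions), and your verification---reversal of the saturated Bruhat chain for the underlying sets, the affine computation for the weight, and the reversal-plus-negation of the $i$-signature carrying the rightmost $+$ to the leftmost $-$ with the $A=\emptyset$ conventions matching the vacuous branches---is precisely the argument being left implicit. Your identification of the signature bookkeeping as the delicate step is also accurate.
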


Proposition~\ref{prop:reversing_alcoves} and Theorem~\ref{thm:alcove_to_path} gives us the following.

\begin{cor}
\label{cor:dual_dual_iso}
Let $\g$ be of symmetrizable type. Then there exists a dual crystal isomorphism $\varpi^{\vee}_{\lambda} \colon \Al^{\vee}(\lambda) \iso \Pi(\lambda)$.
\end{cor}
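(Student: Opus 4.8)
The plan is to chain together the two isomorphisms established immediately before the statement. Proposition~\ref{prop:reversing_alcoves} gives a crystal isomorphism $\Al^{\vee}(\Gamma) \iso \Al(\Gamma)^{\vee}$ realized by the identity map $J \mapsto J$, and Theorem~\ref{thm:alcove_to_path} (specialized to $\Gamma = \Gamma_{\lambda}$) gives the dual crystal isomorphism $\varpi_{\lambda} \colon \Al(\lambda) \to \Pi(-\lambda)$, equivalently the crystal isomorphism $\varpi_{\lambda}^{\vee} \colon \Al(\lambda) \to \Pi(\lambda)$. The goal is to produce a single dual crystal isomorphism $\Al^{\vee}(\lambda) \to \Pi(\lambda)$, so I must track carefully which maps are dual and which are honest crystal isomorphisms, since composing a dual isomorphism with an ordinary one yields a dual isomorphism, but composing two dual ones yields an ordinary one.

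First I would recall from the definition of the contragredient dual that $\Al(\lambda)^{\vee}$ is dual isomorphic to $\Al(\lambda)$ via $J \mapsto J^{\vee}$; that is, the assignment $J \mapsto J$ viewed as a map $\Al^{\vee}(\lambda) \to \Al(\lambda)$ is a dual crystal isomorphism. Concretely, Proposition~\ref{prop:reversing_alcoves} says $\Al^{\vee}(\lambda) \iso \Al(\lambda)^{\vee}$, and since $\Al(\lambda)^{\vee}$ is by construction dual to $\Al(\lambda)$, the composite $\Al^{\vee}(\lambda) \to \Al(\lambda)$ (still the identity on underlying sets) is a dual crystal isomorphism: it intertwines $e_i \leftrightarrow f_i$, swaps $\varepsilon_i \leftrightarrow \varphi_i$, and negates weights. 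Then I would compose this with the ordinary crystal isomorphism $\varpi_{\lambda}^{\vee} \colon \Al(\lambda) \to \Pi(\lambda)$. The composite $\varpi_{\lambda}^{\vee} \circ (\text{identity}) \colon \Al^{\vee}(\lambda) \to \Pi(\lambda)$ is then a dual crystal isomorphism, because precomposing (or postcomposing) a dual crystal isomorphism with an ordinary crystal isomorphism preserves the dual property. This directly yields the desired map, which I would name $\varpi_{\lambda}^{\vee}$ in keeping with the corollary's notation.

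The only genuine care needed is bookkeeping: I would explicitly verify the three defining relations of a dual crystal isomorphism ($f_i(\Psi(b)) = \Psi(e_i b)$, $e_i(\Psi(b)) = \Psi(f_i b)$, $\varphi_i(\Psi(b)) = \varepsilon_i(b)$, $\varepsilon_i(\Psi(b)) = \varphi_i(b)$, $\wt(\Psi(b)) = -\wt(b)$) hold for the composite, by feeding the relations for the two factors through each other. Since the first factor is the identity on sets and the second is $\varpi_{\lambda}^{\vee}$, the computation is immediate once the dual/ordinary classification of each factor is fixed. I expect no real obstacle here; the entire content is the two cited results, and the main (minor) pitfall to guard against is a sign/direction error in composing duality with duality—making sure the final object is genuinely a dual isomorphism $\Al^{\vee}(\lambda) \to \Pi(\lambda)$ rather than an ordinary one. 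I would therefore state the composition cleanly as: Proposition~\ref{prop:reversing_alcoves} makes $J \mapsto J$ a dual isomorphism $\Al^{\vee}(\lambda) \to \Al(\lambda)$, Theorem~\ref{thm:alcove_to_path} gives the (ordinary) isomorphism $\varpi_{\lambda}^{\vee} \colon \Al(\lambda) \to \Pi(\lambda)$, and their composite is the asserted dual crystal isomorphism.
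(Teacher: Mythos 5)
Your proposal is correct and takes essentially the same route as the paper, which simply cites Proposition~\ref{prop:reversing_alcoves} together with Theorem~\ref{thm:alcove_to_path} (via Proposition~\ref{prop:dual_littelmann_path}) without spelling out the composition. Your careful bookkeeping of which factor is a dual isomorphism and which is an ordinary one is exactly the content the paper leaves implicit.
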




\section{Infinite alcove paths}
\label{sec:alcove_infinity}

In this section, we construct the alcove path model for $B(\infty)$ that naturally arises from using alcove paths and dual alcove paths, which we will denote by $\Al(\infty)$ and $\Al^{\vee}(\infty)$, respectively.
In the sequel, we will show that these are isomorphic to the direct limit of $\Al(\lambda)$ and $\Al^{\vee}(\lambda)$ as $\lambda \to \infty$, respectively.

\subsection{The crystal $\Al(\infty)$}

We first give a combinatorial interpretation for $\Al(\infty)$ and then a geometric one.
Fix some $\rho$-chain $\Gamma = (\beta_k)_{k \in K}$. 
We define the \defn{$\infty$-chain of $\Gamma$} as $\dotsm \ast \Gamma \ast \Gamma$, which in terms of alcove walks is
\[
\cdots \rootarrow{-\beta_{m-1}} A_{-m-1} \rootarrow{-\beta_m} A_{-m} \rootarrow{-\beta_1} \cdots \rootarrow{-\beta_{m-1}} A_{-1} \rootarrow{-\beta_m} A_0 = A_{\circ}.
\]
Then $\Al(\infty)$ is the set of all admissible sequences with respect to the above $\infty$-chain of $\Gamma$.  As before, an admissible sequence is a finite set.
Note that if we write the folding positions as $\{(\zeta_1, \ell_1), \dotsc, (\zeta_p, \ell_p)\}$, then we have $\ell_k < 0$ for all $k$.

Geometrically, we start with $\emptyset$ denoting the infinite alcove walk ending at the dominant alcove $A_{\circ}$ and indefinitely repeating backwards along the $\rho$-chain.
All subsequent elements in $\Al(\infty)$ are foldings of this alcove walk. 
In particular, it will not necessarily end in the dominant alcove.
See Figure~\ref{fig:al_geometry} for an example.

\newcommand{\alcoveex}[1]{
\begin{tikzpicture}[>=stealth,scale=0.7,baseline=10]
\begin{scope}
\clip (-2.4,-3.5) rectangle (2.4,5.5);
\fill[gray!10] (0,-2) -- +(60:13) -- +(120:13) -- cycle;
\foreach \i in {-7,...,7} {
   \draw (30:\i) +(120:9) -- +(120:-5);
   \draw (-30:\i) +(60:9) -- +(60:-5);
   \draw (90:\i) +(180:9) -- +(180:-5);
}
\draw[thick,->,blue] (0,-2) -- +(60:1.15) node[anchor=west] {$\Lambda_1$};
\draw[thick,->,blue] (0,-2) -- +(120:1.15) node[anchor=east] {$\Lambda_2$};
\draw[thick,->,color=black!40!green] (0,-2) -- +(30:2) node[anchor=north] {$\alpha_1$};
\draw[thick,->,color=black!40!green] (0,-2) -- +(150:2) node[anchor=north] {$\alpha_2$};
\def\len{0.67}   
#1
\fill (0,-2) circle (.1);
\end{scope}
\end{tikzpicture}
} 

\begin{figure}[t]
\[
\alcoveex{
  \path[thick, red,<-]
  (0,-1.3) edge +(90:\len)
  ++(90:\len) edge +(30:\len)
  ++(30:\len) edge +(90:\len)
  ++(90:\len) edge +(150:\len)
  ++(150:\len) edge +(90:\len) ++(90:\len) edge +(30:\len) ++(30:\len) edge +(90:\len) ++(90:\len) edge +(150:\len)
  ++(150:\len) edge +(90:\len) ++(90:\len) edge +(30:\len) ++(30:\len) edge +(90:\len) ++(90:\len) edge +(150:\len)
  ++(150:\len) edge +(90:\len) ++(90:\len) edge +(30:\len) ++(30:\len) edge +(90:\len) ++(90:\len) edge +(150:\len)
  ++(150:\len) edge +(90:\len) ++(90:\len) edge +(30:\len) ++(30:\len) edge +(90:\len) ++(90:\len) edge +(150:\len);
}
\xrightarrow[\hspace{15pt}]{f_1}
\alcoveex{
  \path[thick, red,<-]
  (1.15,-0.6) edge +(150:\len)
  ++(150:\len) edge[color=magenta] +(210:\len/2+0.015)
   edge +(90:\len)
  ++(90:\len) edge +(150:\len)
  ++(150:\len) edge +(90:\len) ++(90:\len) edge +(30:\len) ++(30:\len) edge +(90:\len) ++(90:\len) edge +(150:\len)
  ++(150:\len) edge +(90:\len) ++(90:\len) edge +(30:\len) ++(30:\len) edge +(90:\len) ++(90:\len) edge +(150:\len)
  ++(150:\len) edge +(90:\len) ++(90:\len) edge +(30:\len) ++(30:\len) edge +(90:\len) ++(90:\len) edge +(150:\len)
  ++(150:\len) edge +(90:\len) ++(90:\len) edge +(30:\len) ++(30:\len) edge +(90:\len) ++(90:\len) edge +(150:\len);
}
\xrightarrow[\hspace{15pt}]{f_2}
\alcoveex{
  \path[thick, red,<-]
  (0.6,-0.3) edge[color=magenta] +(-30:\len/2)
  edge[color=magenta] +(210:\len/2+0.025)
   edge +(90:\len)
  ++(90:\len) edge +(150:\len)
  ++(150:\len) edge +(90:\len) ++(90:\len) edge +(30:\len) ++(30:\len) edge +(90:\len) ++(90:\len) edge +(150:\len)
  ++(150:\len) edge +(90:\len) ++(90:\len) edge +(30:\len) ++(30:\len) edge +(90:\len) ++(90:\len) edge +(150:\len)
  ++(150:\len) edge +(90:\len) ++(90:\len) edge +(30:\len) ++(30:\len) edge +(90:\len) ++(90:\len) edge +(150:\len)
  ++(150:\len) edge +(90:\len) ++(90:\len) edge +(30:\len) ++(30:\len) edge +(90:\len) ++(90:\len) edge +(150:\len);
}
\xrightarrow[\hspace{15pt}]{f_2}
\alcoveex{
  \path[thick, red,<-]
  (-1.15,0.7) edge +(-30:\len)
  ++(-30:\len) edge[color=magenta] +(-90:\len/2+.02)
   edge +(30:\len)
  ++(30:\len) edge[color=magenta] +(-30:\len/2)
   edge +(90:\len)
  ++(90:\len) edge +(30:\len)
  ++(30:\len) edge +(90:\len)
  ++(90:\len) edge +(150:\len)
  ++(150:\len) edge +(90:\len) ++(90:\len) edge +(30:\len) ++(30:\len) edge +(90:\len) ++(90:\len) edge +(150:\len)
  ++(150:\len) edge +(90:\len) ++(90:\len) edge +(30:\len) ++(30:\len) edge +(90:\len) ++(90:\len) edge +(150:\len)
  ++(150:\len) edge +(90:\len) ++(90:\len) edge +(30:\len) ++(30:\len) edge +(90:\len) ++(90:\len) edge +(150:\len);
}
\]
\caption{The action of a few crystal operators on $\Al(\infty)$ in type $A_2$ starting with $\emptyset$ on the left.}
\label{fig:al_geometry}
\end{figure}

We define $f_i$ and $e_i$ by Equations~\eqref{eqn:f_op} and~\eqref{eqn:e_op}, respectively, $\varepsilon_i$ by specifying $\Al(\infty)$ is an upper regular crystal, and $\wt$ by Equation~\eqref{eq:alcove_weight} with $\lambda = 0$.
Thus, we can define $\varphi_i$ by Condition~(1) of an abstract $U_q(\g)$-crystal.

\begin{lemma}
The set $\Al(\infty)$ is an abstract $U_q(\g)$-crystal with the crystal structure given above.
\end{lemma}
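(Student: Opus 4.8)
The plan is to verify the five axioms of an abstract $U_q(\g)$-crystal directly, reducing the nontrivial local statements to the finite crystals $\Al(\Gamma')\iso B(k\rho)$ furnished by Theorem~\ref{thm:isomorphic_highest_weight} (for a $(k\rho)$-chain $\Gamma'$, using that the alcove model does not depend on the chosen chain), and exploiting throughout that each individual computation involves only finitely many entries of the $\infty$-chain.

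Axiom~(1) holds by construction, since $\varphi_i$ is \emph{defined} by $\varphi_i(J) = \varepsilon_i(J) + \inner{\wt(J)}{\alpha_i^{\vee}}$. To see that this, together with the upper-regular definition of $\varepsilon_i$, makes sense, I would first check $\varepsilon_i(J) < \infty$ for every $J$. Fix $i$ and $J$; since $J$ is finite it has a least folding position $j_1$, and all but finitely many indices of the $\infty$-chain lie to the left of $j_1$, where $\gamma_{\ind} = \beta_{\ind} \in \Phi^{+}$, so those indices in $I_{\alpha_i}\setminus J$ contribute $+$'s to the word~\eqref{eqn:crystal_op_word}. Hence the reduced $i$-signature has the form $+\cdots+\,-\cdots-$ with only finitely many $-$'s. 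Each application of $e_i$ either removes one of these $-$'s (the generic branch) or, in the branch~\eqref{eqn:e_op_a}, strictly decreases $\lvert J\rvert$; as both are nonnegative integers bounded in terms of $J$, we get $e_i^{N}J = 0$ for $N \gg 0$, so $\varepsilon_i(J) < \infty$. Consequently $\varphi_i(J)\in\ZZ$ for all $J$, the hypothesis $\varphi_i(J)=-\infty$ of axiom~(5) never occurs, and (5) is vacuous.

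For axioms~(2)--(4) I would fix $J$ and $i$ and pass to a finite truncation. Because $J$, $e_iJ$, and $f_iJ$ are finite and the active data used in~\eqref{eqn:f_op} and~\eqref{eqn:e_op} --- the rightmost $+$, the leftmost $-$, and the elements of $J\cap I_{\alpha_i}$ they reference --- lie in a bounded window, there is a $k\gg0$ so that this window is contained in the last $k$ copies of $\Gamma$ in the $\infty$-chain, which constitute a $(k\rho)$-chain $\Gamma^{k}$. Under the identification of $J$ with the corresponding folding set of $\Al(\Gamma^k)$, I claim all defining data agree: admissibility through the Bruhat covers~\eqref{eqn:bruhat_path} (with $\iota(J)=1$ in both models), the sequence $(\gamma_{\ind})$ of~\eqref{eqn:defn_gamma}, and the operators~\eqref{eqn:f_op},~\eqref{eqn:e_op} depend only on the window together with the fact that every earlier entry contributes an \emph{unmatched} $+$ --- exactly as the initial entries of $\Gamma^{k}$ do. Thus $e_i$, $f_i$, and $\varepsilon_i$ computed in $\Al(\infty)$ coincide with those in $\Al(\Gamma^k)\iso B(k\rho)$.

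It remains to transfer the weight identities, and this is where I expect the main obstacle. For axioms~(2c) and~(3c) I would argue that the difference $\wt(e_iJ)-\wt(J)$ (respectively $\wt(f_iJ)-\wt(J)$) computed from~\eqref{eq:alcove_weight} is an identity among the products of affine reflections $\widehat{r}_{\ind}=s_{\beta_{\ind},-\ell_{\ind}}$ supported on the window, hence insensitive to the basepoint ($-\lambda$ in $\Al(\Gamma^k)$ versus $0$ in $\Al(\infty)$), and equals $\pm\alpha_i$ exactly as in the proof of Theorem~\ref{thm:isomorphic_highest_weight}. Granting this, the $\varepsilon_i$-identities in~(2a),(3a) follow from $\varepsilon_i^{\Al(\infty)}=\varepsilon_i^{\Al(\Gamma^k)}$, the $\varphi_i$-identities~(2b),(3b) then follow from axiom~(1), and the mutual inverse relation~(4) transfers because $e_i$ and $f_i$ agree with their finite counterparts on the window (taking $k$ large enough to contain the windows of both $J$ and $f_iJ$). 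The delicate point is precisely checking that the weight \emph{difference} under a crystal operator is basepoint independent, and that the differing sign conventions for $\ell_{\ind}$ (negative here versus nonnegative in $R_{\lambda}$) reconcile on the window so that the local computation of Theorem~\ref{thm:isomorphic_highest_weight} applies verbatim; a secondary subtlety is confirming that the infinite prefix of unmatched $+$'s never interferes with the cancellation selecting the active $+/-$, and that the special branches~\eqref{eqn:f_op_a} and~\eqref{eqn:e_op_a} remain compatible under truncation.
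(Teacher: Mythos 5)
Your argument is correct, but it takes a genuinely different route from the paper's. The paper's proof makes the same opening observation as you do --- that the reduced $i$-signature has the form $\cdots + + - - \cdots -$ with finitely many $-$'s, so the operators are well-defined and $\varphi_i(J) > -\infty$ --- and then disposes of the remaining axioms by asserting that they ``follow from similar arguments as given in [LP08, Sec.~7],'' i.e., by re-running the finite-rank computations directly on the $\infty$-chain. You instead reduce everything to the finite crystal $\Al(\Gamma^k) \iso B(k\rho)$ of Theorem~\ref{thm:isomorphic_highest_weight} by truncating to the last $k$ copies of $\Gamma$; this is precisely the mechanism ($S^{\pr}_{k\rho}$ and Lemma~\ref{lemma:shift_morphism_inf}) that the paper only introduces later, in Section~\ref{sec:results}, to prove Theorem~\ref{thm:infinite_isomorphism}. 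Your approach buys a self-contained argument that uses the finite-type theorem as a black box, at the cost of the truncation bookkeeping: one must check that admissibility, the word~\eqref{eqn:crystal_op_word}, the active positions in~\eqref{eqn:f_op} and~\eqref{eqn:e_op}, and the weight \emph{differences} from~\eqref{eq:alcove_weight} all agree on the window. You correctly identify the weight comparison as the one point needing an actual computation (the affine reflections $\widehat{r}_j = s_{\beta_j, -\ell_j}$ have different levels in the two models, and one must verify that applying them to the shifted basepoints $0$ versus $-k\rho$ produces weights differing by exactly $k\rho$); this is a short explicit check with $s_{\alpha,h}(\mu) = s_\alpha(\mu) + h\alpha$, and flagging it rather than carrying it out is no less complete than the paper's citation. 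Your observation that the infinite prefix of unmatched $+$'s forces the reduced signature to always contain a $+$, so that the branch~\eqref{eqn:f_op_a} never fires, is consistent with Remark~\ref{remark:simplify_dual_operators} and is handled correctly.
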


\begin{proof}
First note that any reduced $i$-signature is of the form $\cdots + + - - \cdots -$, where there are at most $\varepsilon_i(J)$ number of $-$'s.
Thus, the crystal operators $e_i$ and $f_i$ are well-defined.
Next, note that $\emptyset$ is the highest weight element of $\Al(\infty)$, and we have
\[
\varepsilon_i(\emptyset) = \varphi_i(\emptyset) = \inner{\alpha_i^{\vee}}{\wt(\emptyset)} = 0
\]
for all $i \in I$. Thus it is sufficient to show that Conditions~(3) and~(4) hold as it is clear $\varphi_i(J) > -\infty$ for all $J \in \Al(\infty)$. However, these follow from similar arguments as given in~\cite[Sec.~7]{LP08}.
\end{proof}

\begin{figure}[t]
\[
\begin{tikzpicture}[>=latex,line join=bevel,xscale=0.43,yscale=0.7,every node/.style={scale=0.45}]
\node (node_20) at (789.0bp,79.5bp) [draw,draw=none] {$\left(\left(\alpha_{1}, -2\right), \left(\alpha_{1} + \alpha_{2}, -1\right)\right)$};
  \node (node_21) at (922.0bp,8.5bp) [draw,draw=none] {$\left(\left(\alpha_{1}, -3\right), \left(\alpha_{1} + \alpha_{2}, -1\right)\right)$};
  \node (node_9) at (524.0bp,8.5bp) [draw,draw=none] {$\left(\left(\alpha_{2}, -1\right), \left(\alpha_{1} + \alpha_{2}, -2\right), \left(\alpha_{1}, -1\right)\right)$};
  \node (node_8) at (922.0bp,79.5bp) [draw,draw=none] {$\left(\left(\alpha_{1}, -3\right)\right)$};
  \node (node_7) at (387.0bp,79.5bp) [draw,draw=none] {$\left(\left(\alpha_{2}, -1\right), \left(\alpha_{1} + \alpha_{2}, -2\right)\right)$};
  \node (node_6) at (667.0bp,8.5bp) [draw,draw=none] {$\left(\left(\alpha_{2}, -2\right), \left(\alpha_{1}, -1\right)\right)$};
  \node (node_5) at (682.0bp,150.5bp) [draw,draw=none] {$\left(\left(\alpha_{1}, -1\right), \left(\alpha_{1} + \alpha_{2}, -1\right)\right)$};
  \node (node_4) at (363.0bp,150.5bp) [draw,draw=none] {$\left(\left(\alpha_{2}, -1\right), \left(\alpha_{1} + \alpha_{2}, -1\right)\right)$};
  \node (node_3) at (682.0bp,221.5bp) [draw,draw=none] {$\left(\left(\alpha_{1}, -1\right)\right)$};
  \node (node_2) at (126.0bp,8.5bp) [draw,draw=none] {$\left(\left(\alpha_{2}, -3\right), \left(\alpha_{1} + \alpha_{2}, -1\right)\right)$};
  \node (node_1) at (789.0bp,8.5bp) [draw,draw=none] {$\left(\left(\alpha_{1}, -2\right), \left(\alpha_{1} + \alpha_{2}, -2\right)\right)$};
  \node (node_0) at (25.0bp,8.5bp) [draw,draw=none] {$\left(\left(\alpha_{2}, -4\right)\right)$};
  \node (node_19) at (259.0bp,8.5bp) [draw,draw=none] {$\left(\left(\alpha_{2}, -2\right), \left(\alpha_{1} + \alpha_{2}, -2\right)\right)$};
  \node (node_18) at (557.0bp,292.5bp) [draw,draw=none] {$\left(\right)$};
  \node (node_17) at (381.0bp,8.5bp) [draw,draw=none] {$\left(\left(\alpha_{1}, -2\right), \left(\alpha_{2}, -1\right)\right)$};
  \node (node_16) at (254.0bp,150.5bp) [draw,draw=none] {$\left(\left(\alpha_{2}, -2\right)\right)$};
  \node (node_15) at (667.0bp,79.5bp) [draw,draw=none] {$\left(\left(\alpha_{2}, -1\right), \left(\alpha_{1}, -1\right)\right)$};
  \node (node_14) at (1023.0bp,8.5bp) [draw,draw=none] {$\left(\left(\alpha_{1}, -4\right)\right)$};
  \node (node_13) at (254.0bp,79.5bp) [draw,draw=none] {$\left(\left(\alpha_{2}, -2\right), \left(\alpha_{1} + \alpha_{2}, -1\right)\right)$};
  \node (node_12) at (789.0bp,150.5bp) [draw,draw=none] {$\left(\left(\alpha_{1}, -2\right)\right)$};
  \node (node_11) at (126.0bp,79.5bp) [draw,draw=none] {$\left(\left(\alpha_{2}, -3\right)\right)$};
  \node (node_10) at (363.0bp,221.5bp) [draw,draw=none] {$\left(\left(\alpha_{2}, -1\right)\right)$};
  \draw [blue,->] (node_20) ..controls (826.37bp,59.112bp) and (871.11bp,35.903bp)  .. (node_21);
  \definecolor{strokecol}{rgb}{0.0,0.0,0.0};
  \pgfsetstrokecolor{strokecol}
  \draw (879.5bp,44.0bp) node {$1$};
  \draw [red,->] (node_5) ..controls (678.07bp,131.44bp) and (673.96bp,112.5bp)  .. (node_15);
  \draw (684.5bp,115.0bp) node {$2$};
  \draw [blue,->] (node_7) ..controls (385.43bp,60.442bp) and (383.78bp,41.496bp)  .. (node_17);
  \draw (393.5bp,44.0bp) node {$1$};
  \draw [blue,->] (node_18) ..controls (582.08bp,277.65bp) and (630.23bp,251.08bp)  .. (node_3);
  \draw (643.5bp,257.0bp) node {$1$};
  \draw [blue,->] (node_15) ..controls (626.71bp,59.06bp) and (578.3bp,35.7bp)  .. (node_9);
  \draw (621.5bp,44.0bp) node {$1$};
  \draw [blue,->] (node_13) ..controls (255.31bp,60.442bp) and (256.68bp,41.496bp)  .. (node_19);
  \draw (266.5bp,44.0bp) node {$1$};
  \draw [blue,->] (node_11) ..controls (126.0bp,60.442bp) and (126.0bp,41.496bp)  .. (node_2);
  \draw (134.5bp,44.0bp) node {$1$};
  \draw [blue,->] (node_3) ..controls (711.86bp,201.25bp) and (746.3bp,179.03bp)  .. (node_12);
  \draw (756.5bp,186.0bp) node {$1$};
  \draw [red,->] (node_10) ..controls (332.5bp,201.2bp) and (297.18bp,178.83bp)  .. (node_16);
  \draw (330.5bp,186.0bp) node {$2$};
  \draw [red,->] (node_7) ..controls (425.5bp,59.112bp) and (471.58bp,35.903bp)  .. (node_9);
  \draw (480.5bp,44.0bp) node {$2$};
  \draw [red,->] (node_16) ..controls (217.9bp,130.04bp) and (175.6bp,107.24bp)  .. (node_11);
  \draw (214.5bp,115.0bp) node {$2$};
  \draw [red,->] (node_13) ..controls (217.9bp,59.04bp) and (175.6bp,36.236bp)  .. (node_2);
  \draw (214.5bp,44.0bp) node {$2$};
  \draw [blue,->] (node_10) ..controls (363.0bp,202.44bp) and (363.0bp,183.5bp)  .. (node_4);
  \draw (371.5bp,186.0bp) node {$1$};
  \draw [blue,->] (node_12) ..controls (826.37bp,130.11bp) and (871.11bp,106.9bp)  .. (node_8);
  \draw (879.5bp,115.0bp) node {$1$};
  \draw [red,->] (node_3) ..controls (682.0bp,202.44bp) and (682.0bp,183.5bp)  .. (node_5);
  \draw (690.5bp,186.0bp) node {$2$};
  \draw [red,->] (node_15) ..controls (667.0bp,60.442bp) and (667.0bp,41.496bp)  .. (node_6);
  \draw (675.5bp,44.0bp) node {$2$};
  \draw [red,->] (node_12) ..controls (789.0bp,131.44bp) and (789.0bp,112.5bp)  .. (node_20);
  \draw (797.5bp,115.0bp) node {$2$};
  \draw [red,->] (node_18) ..controls (524.29bp,279.87bp) and (439.75bp,249.8bp)  .. (node_10);
  \draw (492.5bp,257.0bp) node {$2$};
  \draw [blue,->] (node_4) ..controls (369.32bp,131.34bp) and (376.0bp,112.12bp)  .. (node_7);
  \draw (386.5bp,115.0bp) node {$1$};
  \draw [red,->] (node_20) ..controls (789.0bp,60.442bp) and (789.0bp,41.496bp)  .. (node_1);
  \draw (797.5bp,44.0bp) node {$2$};
  \draw [red,->] (node_11) ..controls (97.894bp,59.299bp) and (65.591bp,37.231bp)  .. (node_0);
  \draw (96.5bp,44.0bp) node {$2$};
  \draw [blue,->] (node_8) ..controls (950.11bp,59.299bp) and (982.41bp,37.231bp)  .. (node_14);
  \draw (992.5bp,44.0bp) node {$1$};
  \draw [blue,->] (node_5) ..controls (711.86bp,130.25bp) and (746.3bp,108.03bp)  .. (node_20);
  \draw (756.5bp,115.0bp) node {$1$};
  \draw [red,->] (node_4) ..controls (332.5bp,130.2bp) and (297.18bp,107.83bp)  .. (node_13);
  \draw (330.5bp,115.0bp) node {$2$};
  \draw [red,->] (node_8) ..controls (922.0bp,60.442bp) and (922.0bp,41.496bp)  .. (node_21);
  \draw (930.5bp,44.0bp) node {$2$};
  \draw [blue,->] (node_16) ..controls (254.0bp,131.44bp) and (254.0bp,112.5bp)  .. (node_13);
  \draw (262.5bp,115.0bp) node {$1$};
\end{tikzpicture}
\]
\caption{The first four levels of $\Al(\infty)$ of type $A_2$.}
\label{fig:A2_example}
\end{figure}
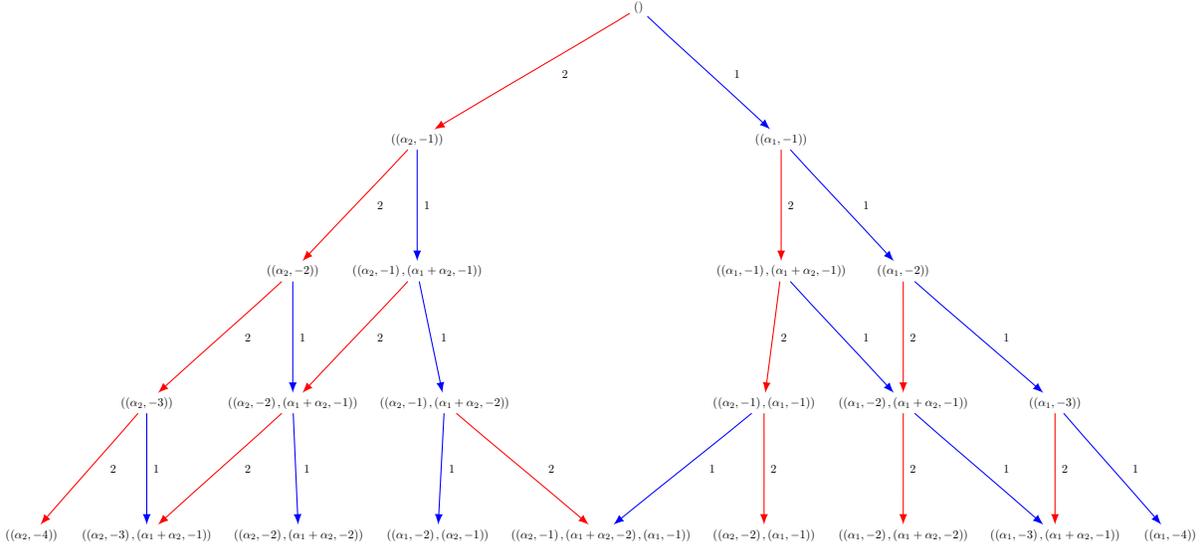

\begin{ex}
We give $f_{i_4} f_{i_3} f_{i_2} f_{i_1} \emptyset$, where $i_1, i_2, i_3, i_4 \in \{1,2\}$, for type $A_2$ in Figure~\ref{fig:A2_example}.
\end{ex}

\subsection{The dual crystal $\Al^{\vee}(\infty)$}

The construction of $\Al^{\vee}(\infty)$ will be similar to the construction done in Section~\ref{sec:alcove_reversed}.

For a fixed dual $\rho$-chain $\Gamma^{\vee} = (\beta_{\ind})_{\ind \in \IndSet}$, we define the \defn{dual $\infty$-chain of $\Gamma$} as $\Gamma^{\vee} \ast \Gamma^{\vee} \ast \cdots$, which in terms of alcove walks is
\[
A_{\circ} = A_0 \rootarrow{\beta_1} A_1 \rootarrow{\beta_2} \cdots \rootarrow{\beta_m} A_m \rootarrow{\beta_1} A_{m+1} \rootarrow{\beta_2} \cdots. 
\]
We can also define crystal operators on $\Al^{\vee}(\infty)$ as in Section~\ref{sec:alcove_reversed}.

\begin{prop}
\label{prop:reversing_infinite_alcoves}
We have a crystal isomorphism
\[
\Al^{\vee}(\infty) \iso \Al(\infty)^{\vee}
\]
given by $J \mapsto J$.
\end{prop}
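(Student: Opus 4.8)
The plan is to transfer the finite-rank statement, Proposition~\ref{prop:reversing_alcoves}, to the infinite setting, using the crucial fact that every (dual) admissible set is a \emph{finite} subset and is therefore supported on finitely many terms of the (dual) $\infty$-chain. First I would set up the order-reversing, root-negating bijection $\phi$ between the index set of the $\infty$-chain $\cdots \ast \Gamma \ast \Gamma$ (infinite to the left, ending at $A_{\circ}$) and that of the dual $\infty$-chain $\Gamma^{\vee} \ast \Gamma^{\vee} \ast \cdots$ (infinite to the right, starting at $A_{\circ}$): reversing the former and sending each root $\beta \mapsto -\beta$ produces precisely the latter, exactly as $\Gamma^{\vee} = (-\beta_{\ind})_{\ind \in \IndSet^{\vee}}$ is obtained from $\Gamma$. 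Since reflections satisfy $s_{-\beta} = s_{\beta}$, under $\phi$ a dual-admissible chain $\iota(J) = \tau(J)^{-1} \gtrdot \cdots \gtrdot 1$ corresponds, after left multiplication by $\tau(J)$ and reversal, to an admissible chain $1 \lessdot \cdots \lessdot \tau(J)$. Both are assertions about finite saturated Bruhat chains, so the equivalence ``dual admissible with respect to the dual $\infty$-chain $\iff$ admissible with respect to the $\infty$-chain'' transfers verbatim from the finite case, and $J \mapsto J$ is a well-defined bijection $\Al^{\vee}(\infty) \to \Al(\infty)^{\vee}$.

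Next I would check that this bijection intertwines the crystal operators \emph{dually}, i.e. $f_i$ on $\Al^{\vee}(\infty)$ matches $e_i$ on $\Al(\infty)$ and $e_i$ on $\Al^{\vee}(\infty)$ matches $f_i$ on $\Al(\infty)$, as demanded by the definition of the contragredient dual $\Al(\infty)^{\vee}$. This is where the reversal does its work: the sequence $\Gamma^{\vee}(J)$ is the reversal of the sequence $(\gamma_{\ind})$ from \eqref{eqn:defn_gamma}, and the operators on $\Al^{\vee}(\infty)$ use the negated sign word $\sgn(-\gamma_{d_1}) \dotsm \sgn(-\gamma_{d_q})$ in place of \eqref{eqn:crystal_op_word}. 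Reversing a $\{+,-\}$-word and swapping $+ \leftrightarrow -$ interchanges the rules ``rightmost $+$, take $\min A$'' of \eqref{eqn:f_op} with ``leftmost $-$, take $\max A$'' of \eqref{eqn:e_op} (and correspondingly interchanges the two branches \eqref{eqn:f_op_a} and \eqref{eqn:e_op_a}); this is exactly the local sign-word computation already carried out for $\Al^{\vee}(\Gamma)$ in the proof of Proposition~\ref{prop:reversing_alcoves}. Because $I_{\alpha_i} \setminus J$ from \eqref{eqn:root_op_sets} meets only finitely many indices near the finite support of $J$, the computation is literally the finite one performed on the $\infty$-chain, so the operators match dually.

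Finally I would verify the numerical data. Specializing $\lambda = 0$ in \eqref{eq:alcove_weight} gives $\wt_{\Al(\infty)}(J) = -\widehat{r}_{j_1} \dotsm \widehat{r}_{j_p}(0)$, and one checks that the dual-model weight $\iota(J)\widehat{r}'_{j_1} \dotsm \widehat{r}'_{j_p}(\lambda)$ at $\lambda = 0$ equals $-\wt_{\Al(\infty)}(J)$; this is again the finite weight identity underlying Proposition~\ref{prop:reversing_alcoves}, applied to the finite support. Since $\Al(\infty)$ is upper regular, its dual $\Al(\infty)^{\vee}$ is lower regular, so declaring $\Al^{\vee}(\infty)$ lower regular (with $\varphi_i$ the length of the $f_i$-string and $\varepsilon_i$ fixed by the crystal axiom) yields $\varphi_i^{\Al^{\vee}(\infty)}(J) = \varepsilon_i^{\Al(\infty)}(J)$ and $\varepsilon_i^{\Al^{\vee}(\infty)}(J) = \varphi_i^{\Al(\infty)}(J)$, consistently with the weight negation.

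I expect the main obstacle to be precisely this last bookkeeping. The two models are set up with superficially different weight formulas, and $\Al(\infty)$ is only \emph{upper} regular, so one cannot simply invoke regularity for the $\varphi_i$ on $\Al(\infty)$ and the $\varepsilon_i$ on $\Al^{\vee}(\infty)$; these ``non-regular'' halves of the data must be shown to dualize correctly via the weight identity. The saving grace is that on $\Al(\infty)$ and $\Al^{\vee}(\infty)$ all values $\varepsilon_i, \varphi_i$ are finite (no $-\infty$ degeneracy occurs, as noted in the lemma that $\varphi_i(J) > -\infty$), so every identity reduces to a finite computation already validated in the finite case.
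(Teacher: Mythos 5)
Your argument is correct and takes essentially the same route as the paper, whose entire proof of this proposition is the single line ``Similar to the proof of Proposition~\ref{prop:reversing_alcoves}'' --- that is, both reduce the statement to the finite-rank case, using the fact that every (dual) admissible set is finite and hence supported on finitely many terms of the $\infty$-chain. Your write-up simply supplies the details (the order-reversing, root-negating bijection of chains, the inversion of saturated Bruhat chains, the sign-word reversal matching $f_i$ with $e_i$, and the $\wt$/$\varepsilon_i$/$\varphi_i$ bookkeeping) that the paper leaves implicit.
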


\begin{proof}
Similar to the proof of Proposition~\ref{prop:reversing_alcoves}.
\end{proof}

Unlike for the model $\Al(\infty)$, the alcove paths for $\Al^{\vee}(\infty)$ will always end in an (closed) alcove that contains the origin (\textit{i.e.}, it will start in an alcove of the fundamental domain with respect to the action of $Q$).
For an example, see Figure~\ref{fig:al_dual_geometry}.

\begin{figure}[t]
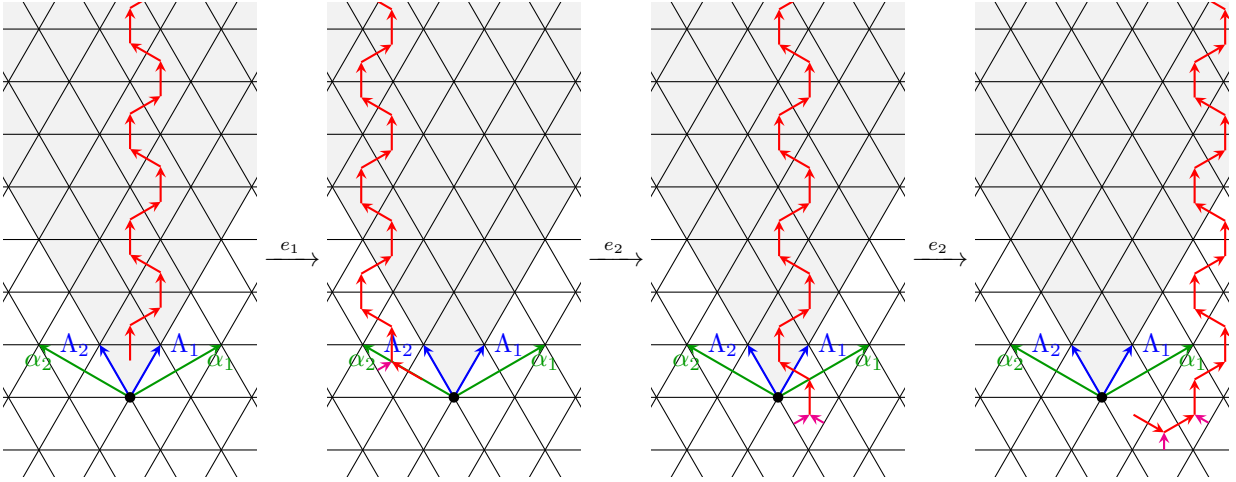

\[
\alcoveex{
  \path[thick, red,->]
  (0,-1.3) edge +(90:\len)
  ++(90:\len) edge +(30:\len)
  ++(30:\len) edge +(90:\len)
  ++(90:\len) edge +(150:\len)
  ++(150:\len) edge +(90:\len) ++(90:\len) edge +(30:\len) ++(30:\len) edge +(90:\len) ++(90:\len) edge +(150:\len)
  ++(150:\len) edge +(90:\len) ++(90:\len) edge +(30:\len) ++(30:\len) edge +(90:\len) ++(90:\len) edge +(150:\len)
  ++(150:\len) edge +(90:\len) ++(90:\len) edge +(30:\len) ++(30:\len) edge +(90:\len) ++(90:\len) edge +(150:\len)
  ++(150:\len) edge +(90:\len) ++(90:\len) edge +(30:\len) ++(30:\len) edge +(90:\len) ++(90:\len) edge +(150:\len);
}
\xrightarrow[\hspace{15pt}]{e_1}
\alcoveex{
  \path[thick, red,->]
  (-0.6,-1.66) edge +(150:\len)
  ++(150:\len) edge[color=magenta,<-] +(210:\len/2-0.025)
   edge +(90:\len)
  ++(90:\len) edge +(150:\len)
  ++(150:\len) edge +(90:\len) ++(90:\len) edge +(30:\len) ++(30:\len) edge +(90:\len) ++(90:\len) edge +(150:\len)
  ++(150:\len) edge +(90:\len) ++(90:\len) edge +(30:\len) ++(30:\len) edge +(90:\len) ++(90:\len) edge +(150:\len)
  ++(150:\len) edge +(90:\len) ++(90:\len) edge +(30:\len) ++(30:\len) edge +(90:\len) ++(90:\len) edge +(150:\len)
  ++(150:\len) edge +(90:\len) ++(90:\len) edge +(30:\len) ++(30:\len) edge +(90:\len) ++(90:\len) edge +(150:\len);
}
\xrightarrow[\hspace{15pt}]{e_2}
\alcoveex{
  \path[thick, red,->]
  (0.6,-2.33) edge[color=magenta,<-] +(-30:\len/2-0.025)
   edge[color=magenta,<-] +(210:\len/2+0.01)
   edge +(90:\len)
  ++(90:\len) edge +(150:\len)
  ++(150:\len) edge +(90:\len) ++(90:\len) edge +(30:\len) ++(30:\len) edge +(90:\len) ++(90:\len) edge +(150:\len)
  ++(150:\len) edge +(90:\len) ++(90:\len) edge +(30:\len) ++(30:\len) edge +(90:\len) ++(90:\len) edge +(150:\len)
  ++(150:\len) edge +(90:\len) ++(90:\len) edge +(30:\len) ++(30:\len) edge +(90:\len) ++(90:\len) edge +(150:\len)
  ++(150:\len) edge +(90:\len) ++(90:\len) edge +(30:\len) ++(30:\len) edge +(90:\len) ++(90:\len) edge +(150:\len);
}
\xrightarrow[\hspace{15pt}]{e_2}
\alcoveex{
  \path[thick, red,->]
  (0.6,-2.33) edge +(-30:\len)
  ++(-30:\len) edge[color=magenta,<-] +(-90:\len/2)
   edge +(30:\len)
  ++(30:\len) edge[color=magenta,<-] +(-30:\len/2-0.03)
   edge +(90:\len)
  ++(90:\len) edge +(30:\len) ++(30:\len) edge +(90:\len) ++(90:\len) edge +(150:\len)
  ++(150:\len) edge +(90:\len) ++(90:\len) edge +(30:\len) ++(30:\len) edge +(90:\len) ++(90:\len) edge +(150:\len)
  ++(150:\len) edge +(90:\len) ++(90:\len) edge +(30:\len) ++(30:\len) edge +(90:\len) ++(90:\len) edge +(150:\len)
  ++(150:\len) edge +(90:\len) ++(90:\len) edge +(30:\len) ++(30:\len) edge +(90:\len) ++(90:\len) edge +(150:\len);
}
\]
\caption{The action of a few crystal operators on $\Al^{\vee}(\infty)$ in type $A_2$ starting with $\emptyset$ on the left.}
\label{fig:al_dual_geometry}
\end{figure}

\section{Main results}
\label{sec:results}

Let $\lambda, \mu \in P^+$.
We embed $\Al(\lambda)$ into $\Al(\mu + \lambda)$ as follows.
Recall that there is a unique component $B(\mu + \lambda) \subseteq B(\mu) \otimes B(\lambda)$ and that $B(\lambda)$ embeds into $B(\mu + \lambda) \subseteq B(\mu) \otimes B(\lambda)$ by $b \mapsto u_{\mu} \otimes b$.
Using this idea and that concatenation corresponds to tensor products in terms of Littelmann paths, we 
consider the $(\mu + \lambda)$-chain $\Delta = \Gamma_{\mu} \ast \Gamma_{\lambda}$.
We define an embedding of $\Al(\lambda)$ into $\Al(\Delta)$ (which is conjecturally isomorphic to $\Al(\lambda + \mu)$) by a $\mu$-shift.
More precisely, let  $\{ (\zeta_1, \ell_1), \dotsc, (\zeta_p, \ell_p) \} = J \in \Al(\lambda)$, and define $S_{\mu} \colon \Al(\lambda) \to T_{-\mu} \otimes \Al(\Delta)$ by
\begin{equation}
\label{eq:shift_map}
S_{\mu}(J) := t_{-\mu} \otimes \bigl( (\zeta_1, \inner{\mu}{\zeta_1^{\vee}} + \ell_1),
\dotsc, (\zeta_p, \inner{\mu}{\zeta_p^{\vee}} + \ell_p) \bigr).
\end{equation}
Observe that $S_{\mu}$ is a crystal embedding since $S_{\mu}(\emptyset) = \emptyset$ and if $f_i J \neq 0$, then $f_i$ either adds a folding position to $J$ or moves a folding position.
This operation depends entirely on the folded $\lambda$-chain and acts on the highest level possible.
Therefore, it is not affected by the shift.
In other words, we have $S_{\mu}(f_i J) = f_i S_{\mu}(J)$.
Similar statements hold for $e_i$.

\begin{remark}
The admissible $\Al(\Delta)$ does not directly correspond to the concatenation of admissible sequences in $\Al(\mu) \otimes \Al(\lambda)$.
See Example~\ref{ex:concat_admissible}.
However, if $\lambda + \mu = k \lambda$ for some $k \in \QQ_{\geq 0}$ (\textit{i.e.}, $\lambda$ and $\mu$ are scalar multiples of some other dominant weight $\nu$), then $\Delta = \Gamma_{k\lambda}$ and $\Al(\Delta) = \Al(k\lambda)$.
\end{remark}

\begin{lemma}
\label{lemma:shift_morphism}
Fix some $\lambda \in P^+$.
Suppose $\mu \in P$ is such that $\mu + \lambda = k \lambda$ for some $k \in \QQ_{\geq 0}$.
The map $S_{\mu} \colon \Al(\lambda) \to T_{-\mu} \otimes \Al(\lambda + \mu)$ given by Equation~\eqref{eq:shift_map}, where $S_{\mu}(J) = 0$ if the result is not admissible, is a crystal morphism.
Moreover, if $k \geq 1$, then $S_{\mu}$ is a crystal embedding, and if $k \leq 1$, then $S_{\mu}$ is a surjection.
\end{lemma}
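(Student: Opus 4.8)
The plan is to prove the three assertions by splitting on the two inequalities, handling $k \ge 1$ by a direct analysis of the folded chain and then bootstrapping the $k \le 1$ statements from it by a reciprocity argument. Throughout I write $\mu = (k-1)\lambda$, so $\lambda + \mu = k\lambda$, and I use the identification $\Al(\lambda+\mu) = \Al(\Delta)$ with $\Delta = \Gamma_{\mu} \ast \Gamma_{\lambda} = \Gamma_{k\lambda}$ supplied by the remark preceding the lemma. Two observations drive everything: the reflections $r_{j_a} = s_{\zeta_a}$ and the Weyl group element $\tau(J) = s_{\zeta_1} \dotsm s_{\zeta_p}$ depend only on the roots $\zeta_a$ and their relative order, never on the heights $\ell_a$; and the height shift $(\zeta_a, \ell_a) \mapsto (\zeta_a, \inner{\mu}{\zeta_a^{\vee}} + \ell_a)$ preserves both the roots and their order.

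First suppose $k \ge 1$, so $\mu \in P^+$ and $c_i := \inner{\mu}{\alpha_i^{\vee}} = (k-1)\inner{\lambda}{\alpha_i^{\vee}} \in \ZZ_{\ge 0}$. The shifted heights lie in $[\inner{\mu}{\zeta_a^{\vee}}, \inner{\mu+\lambda}{\zeta_a^{\vee}})$, i.e. precisely in the $\Gamma_{\lambda}$-block of $\Delta$, and are nonnegative, so $S_{\mu}(J)$ is a genuine folding of $\Delta$; since the Bruhat chain \eqref{eqn:bruhat_path} is unchanged (same roots, same order), $S_{\mu}(J)$ is admissible, and the height shift being injective gives injectivity of $S_{\mu}$. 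The key computation is the comparison of reduced $i$-signatures. As $\Gamma_{\mu}$ entirely precedes $\Gamma_{\lambda}$ in $\Delta$ and $S_{\mu}(J)$ has no foldings in the $\Gamma_{\mu}$-block, each of the $c_i$ indices of $\Gamma_{\mu}$ with root $\alpha_i$ contributes $\gamma_k = \beta_k = \alpha_i$, i.e. a $+$, to \eqref{eqn:crystal_op_word}, and these precede the contributions of the $\Gamma_{\lambda}$-block, which reproduce verbatim the word of $J$ in $\Al(\lambda)$. I would then note that a leading block of $+$'s is never cancelled and never interferes with cancellations to its right, so the reduced $i$-signature of $S_{\mu}(J)$ equals $\,+^{c_i}\cdot(\text{reduced }i\text{-signature of }J)$.

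From this identity the morphism axioms follow. Writing the reduced signature of $J$ as $+^{a}-^{b}$, that of $S_{\mu}(J)$ is $+^{c_i+a}-^{b}$, so $\varepsilon_i(S_{\mu}(J)) = b = \varepsilon_i(J)$, while the tensor rule and $\inner{\alpha_i^{\vee}}{\wt(t_{-\mu})} = -c_i$ give $\varphi_i(S_{\mu}(J)) = (c_i+a) - c_i = a = \varphi_i(J)$; the weight is then matched inductively from $\wt(S_{\mu}(\emptyset)) = -\mu + k\lambda = \lambda = \wt(\emptyset)$, using that $\Al(\lambda) \iso B(\lambda)$ is generated from $\emptyset$ by the $f_i$. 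For the operators, the leftmost $-$ and, when $a > 0$, the rightmost $+$ of the signature of $S_{\mu}(J)$ sit at the same $\Gamma_{\lambda}$-block indices as for $J$ (the special cases being checked directly, using that $\iota(J)=\iota(\tilde J)=1$ and $\tau(J)=\tau(\tilde J)$ as Weyl group elements), so $e_i,f_i$ act on the same folding position and $S_{\mu}$ intertwines them whenever the outputs are nonzero; in the single degenerate case $a = 0$ one has $f_i J = 0$ by Remark~\ref{remark:simplify_dual_operators}, making condition~(4) vacuous. Together with injectivity, this shows $S_{\mu}$ is a crystal embedding for $k \ge 1$.

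Finally, for $k \le 1$ I would apply the case just proved to the reciprocal data: with base weight $\nu = k\lambda \in P^+$ and shift $-\mu = (1-k)\lambda$, one has $\nu + (-\mu) = \lambda = (1/k)\,\nu$ with $1/k \ge 1$, so $S_{-\mu}\colon \Al(k\lambda) \to T_{\mu} \otimes \Al(\lambda)$ is a crystal embedding. Its height shift is the negative of that of $S_{\mu}$, so on the image of $S_{-\mu}$ the map $S_{\mu}$ is exactly the inverse crystal isomorphism; moreover $S_{\mu}(J) \neq 0$ forces $J$ to lie in that image (its $\Al(k\lambda)$-part $\tilde J$ is admissible and satisfies $S_{-\mu}(\tilde J) = t_{\mu}\otimes J$). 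Hence $S_{\mu}$ preserves $\wt,\varepsilon_i,\varphi_i$ and intertwines the operators wherever it is nonzero, so it is a crystal morphism, and since every $\tilde J \in \Al(k\lambda)$ arises as $S_{\mu}$ of the $\Al(\lambda)$-part of $S_{-\mu}(\tilde J)$, the map $S_{\mu}$ is surjective. I expect the main obstacle to be the signature bookkeeping in the $k \ge 1$ step: verifying that the $c_i$ spurious $+$'s from the $\Gamma_{\mu}$-block neither disturb the reduction of the genuine signature nor move the active position of $e_i$/$f_i$, and confirming that the lone $a = 0$ exception is harmless.
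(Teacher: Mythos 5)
Your proof is correct and follows essentially the same route as the paper's: both rest on the observations that $\Gamma_{k\lambda}$ and $\Gamma_{\lambda}$ are nested powers of a common chain, that admissibility depends only on the roots and their relative order (hence is unaffected by the height shift), and that the crystal operators act within the common block. Your write-up is considerably more detailed than the paper's three-sentence argument (the explicit reduced $i$-signature decomposition $+^{c_i}\cdot w(J)$ makes precise what the paper asserts), and your packaging of the $k \leq 1$ case as a formal inversion of the $k \geq 1$ embedding is only a cosmetic variation on the paper's symmetric treatment of the two cases.
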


\begin{proof}
From our assumptions, there exists some $\nu \in P^+$ such that $\Gamma_{\mu+\lambda} = \Gamma_{\nu}^{m_{\mu+\lambda}}$ and $\Gamma_{\lambda} = \Gamma_{\nu}^{m_{\lambda}}$ for some integers $m_{\mu+\lambda}$ and $m_{\lambda}$.
Note that $k = m_{\mu + \lambda} / m_{\lambda}$.
If $m_{\lambda} \leq m_{\mu+\lambda}$, then similar to the discussion above, the crystal operators act only on the $\lambda$-chain part of the $(\mu + \lambda)$-chain.
Furthermore, it is straightforward to see that every admissible sequence in $\Al(\lambda)$ is admissible in $\Al(\lambda + \mu)$.
Likewise, if $m_{\lambda} \geq m_{\mu+\lambda}$, then the crystal operators act only on the $(\mu + \lambda)$-chain part of the $\lambda$-chain and $\Al(\lambda + \mu) \subseteq \Al(\lambda)$.
Thus, the claim follows.
\end{proof}

For an example of Lemma~\ref{lemma:shift_morphism}, compare the top and bottom examples of Figure~\ref{fig:al_rho_ex}.

Next, for $k \geq 0$, define $S^{\inc}_{-k\rho} \colon T_{-k\rho} \otimes \Al(k\rho) \to \Al(\infty)$ by
\[
S^{\inc}_{-k\rho}(t_{-k\rho} \otimes J) = \bigl( (\zeta_1, \ell_1 - k \inner{\rho}{\zeta_1^{\vee}}),
\dotsc, \left(\zeta_p, \ell_p - k \inner{\rho}{\zeta_p^{\vee}} \right) \bigr)
\]
and $S^{\pr}_{k\rho} \colon \Al(\infty) \to T_{-k\rho} \otimes \Al(k\rho)$ by
\[
S^{\pr}_{k\rho}(J) = t_{-k\rho} \otimes \bigl( (\zeta_1, \ell_1 + k \inner{\rho}{\zeta_1^{\vee}}),
\dotsc, \left(\zeta_p, \ell_p + k \inner{\rho}{\zeta_p^{\vee}}\right) \bigr)
\]
if the result is admissible and $S^{\pr}_{k\rho}(J) = 0$ otherwise.

\begin{figure}
\[
\begin{tikzpicture}[>=latex,line join=bevel,xscale=0.43,yscale=0.7,every node/.style={scale=0.45}]
\node (node_20) at (789.0bp,79.5bp) [draw,draw=none] {$\left(\left(\alpha_{1}, 2\right), \left(\alpha_{1} + \alpha_{2}, 7\right)\right)$};
  \node (node_21) at (922.0bp,8.5bp) [draw,draw=none] {$\left(\left(\alpha_{1}, 1\right), \left(\alpha_{1} + \alpha_{2}, 7\right)\right)$};
  \node (node_9) at (524.0bp,8.5bp) [draw,draw=none] {$\left(\left(\alpha_{2}, 3\right), \left(\alpha_{1} + \alpha_{2}, 6\right), \left(\alpha_{1}, 3\right)\right)$};
  \node (node_8) at (922.0bp,79.5bp) [draw,draw=none] {$\left(\left(\alpha_{1}, 1\right)\right)$};
  \node (node_7) at (387.0bp,79.5bp) [draw,draw=none] {$\left(\left(\alpha_{2}, 3\right), \left(\alpha_{1} + \alpha_{2}, 6\right)\right)$};
  \node (node_6) at (667.0bp,8.5bp) [draw,draw=none] {$\left(\left(\alpha_{2}, 2\right), \left(\alpha_{1}, 3\right)\right)$};
  \node (node_5) at (682.0bp,150.5bp) [draw,draw=none] {$\left(\left(\alpha_{1}, 3\right), \left(\alpha_{1} + \alpha_{2}, 7\right)\right)$};
  \node (node_4) at (363.0bp,150.5bp) [draw,draw=none] {$\left(\left(\alpha_{2}, 3\right), \left(\alpha_{1} + \alpha_{2}, 7\right)\right)$};
  \node (node_3) at (682.0bp,221.5bp) [draw,draw=none] {$\left(\left(\alpha_{1}, 3\right)\right)$};
  \node (node_2) at (126.0bp,8.5bp) [draw,draw=none] {$\left(\left(\alpha_{2}, 1\right), \left(\alpha_{1} + \alpha_{2}, 7\right)\right)$};
  \node (node_1) at (789.0bp,8.5bp) [draw,draw=none] {$\left(\left(\alpha_{1}, 2\right), \left(\alpha_{1} + \alpha_{2}, 6\right)\right)$};
  \node (node_0) at (25.0bp,8.5bp) [draw,draw=none] {$\left(\left(\alpha_{2}, 0\right)\right)$};
  \node (node_19) at (259.0bp,8.5bp) [draw,draw=none] {$\left(\left(\alpha_{2}, 2\right), \left(\alpha_{1} + \alpha_{2}, 6\right)\right)$};
  \node (node_18) at (557.0bp,292.5bp) [draw,draw=none] {$\left(\right)$};
  \node (node_17) at (381.0bp,8.5bp) [draw,draw=none] {$\left(\left(\alpha_{1}, 2\right), \left(\alpha_{2}, 3\right)\right)$};
  \node (node_16) at (254.0bp,150.5bp) [draw,draw=none] {$\left(\left(\alpha_{2}, 2\right)\right)$};
  \node (node_15) at (667.0bp,79.5bp) [draw,draw=none] {$\left(\left(\alpha_{2}, 3\right), \left(\alpha_{1}, 3\right)\right)$};
  \node (node_14) at (1023.0bp,8.5bp) [draw,draw=none] {$\left(\left(\alpha_{1}, 0\right)\right)$};
  \node (node_13) at (254.0bp,79.5bp) [draw,draw=none] {$\left(\left(\alpha_{2}, 2\right), \left(\alpha_{1} + \alpha_{2}, 7\right)\right)$};
  \node (node_12) at (789.0bp,150.5bp) [draw,draw=none] {$\left(\left(\alpha_{1}, 4\right)\right)$};
  \node (node_11) at (126.0bp,79.5bp) [draw,draw=none] {$\left(\left(\alpha_{2}, 1\right)\right)$};
  \node (node_10) at (363.0bp,221.5bp) [draw,draw=none] {$\left(\left(\alpha_{2}, 3\right)\right)$};
  \draw [blue,->] (node_20) ..controls (826.37bp,59.112bp) and (871.11bp,35.903bp)  .. (node_21);
  \definecolor{strokecol}{rgb}{0.0,0.0,0.0};
  \pgfsetstrokecolor{strokecol}
  \draw (879.5bp,44.0bp) node {$1$};
  \draw [red,->] (node_5) ..controls (678.07bp,131.44bp) and (673.96bp,112.5bp)  .. (node_15);
  \draw (684.5bp,115.0bp) node {$2$};
  \draw [blue,->] (node_7) ..controls (385.43bp,60.442bp) and (383.78bp,41.496bp)  .. (node_17);
  \draw (393.5bp,44.0bp) node {$1$};
  \draw [blue,->] (node_18) ..controls (582.08bp,277.65bp) and (630.23bp,251.08bp)  .. (node_3);
  \draw (643.5bp,257.0bp) node {$1$};
  \draw [blue,->] (node_15) ..controls (626.71bp,59.06bp) and (578.3bp,35.7bp)  .. (node_9);
  \draw (621.5bp,44.0bp) node {$1$};
  \draw [blue,->] (node_13) ..controls (255.31bp,60.442bp) and (256.68bp,41.496bp)  .. (node_19);
  \draw (266.5bp,44.0bp) node {$1$};
  \draw [blue,->] (node_11) ..controls (126.0bp,60.442bp) and (126.0bp,41.496bp)  .. (node_2);
  \draw (134.5bp,44.0bp) node {$1$};
  \draw [blue,->] (node_3) ..controls (711.86bp,201.25bp) and (746.3bp,179.03bp)  .. (node_12);
  \draw (756.5bp,186.0bp) node {$1$};
  \draw [red,->] (node_10) ..controls (332.5bp,201.2bp) and (297.18bp,178.83bp)  .. (node_16);
  \draw (330.5bp,186.0bp) node {$2$};
  \draw [red,->] (node_7) ..controls (425.5bp,59.112bp) and (471.58bp,35.903bp)  .. (node_9);
  \draw (480.5bp,44.0bp) node {$2$};
  \draw [red,->] (node_16) ..controls (217.9bp,130.04bp) and (175.6bp,107.24bp)  .. (node_11);
  \draw (214.5bp,115.0bp) node {$2$};
  \draw [red,->] (node_13) ..controls (217.9bp,59.04bp) and (175.6bp,36.236bp)  .. (node_2);
  \draw (214.5bp,44.0bp) node {$2$};
  \draw [blue,->] (node_10) ..controls (363.0bp,202.44bp) and (363.0bp,183.5bp)  .. (node_4);
  \draw (371.5bp,186.0bp) node {$1$};
  \draw [blue,->] (node_12) ..controls (826.37bp,130.11bp) and (871.11bp,106.9bp)  .. (node_8);
  \draw (879.5bp,115.0bp) node {$1$};
  \draw [red,->] (node_3) ..controls (682.0bp,202.44bp) and (682.0bp,183.5bp)  .. (node_5);
  \draw (690.5bp,186.0bp) node {$2$};
  \draw [red,->] (node_15) ..controls (667.0bp,60.442bp) and (667.0bp,41.496bp)  .. (node_6);
  \draw (675.5bp,44.0bp) node {$2$};
  \draw [red,->] (node_12) ..controls (789.0bp,131.44bp) and (789.0bp,112.5bp)  .. (node_20);
  \draw (797.5bp,115.0bp) node {$2$};
  \draw [red,->] (node_18) ..controls (524.29bp,279.87bp) and (439.75bp,249.8bp)  .. (node_10);
  \draw (492.5bp,257.0bp) node {$2$};
  \draw [blue,->] (node_4) ..controls (369.32bp,131.34bp) and (376.0bp,112.12bp)  .. (node_7);
  \draw (386.5bp,115.0bp) node {$1$};
  \draw [red,->] (node_20) ..controls (789.0bp,60.442bp) and (789.0bp,41.496bp)  .. (node_1);
  \draw (797.5bp,44.0bp) node {$2$};
  \draw [red,->] (node_11) ..controls (97.894bp,59.299bp) and (65.591bp,37.231bp)  .. (node_0);
  \draw (96.5bp,44.0bp) node {$2$};
  \draw [blue,->] (node_8) ..controls (950.11bp,59.299bp) and (982.41bp,37.231bp)  .. (node_14);
  \draw (992.5bp,44.0bp) node {$1$};
  \draw [blue,->] (node_5) ..controls (711.86bp,130.25bp) and (746.3bp,108.03bp)  .. (node_20);
  \draw (756.5bp,115.0bp) node {$1$};
  \draw [red,->] (node_4) ..controls (332.5bp,130.2bp) and (297.18bp,107.83bp)  .. (node_13);
  \draw (330.5bp,115.0bp) node {$2$};
  \draw [red,->] (node_8) ..controls (922.0bp,60.442bp) and (922.0bp,41.496bp)  .. (node_21);
  \draw (930.5bp,44.0bp) node {$2$};
  \draw [blue,->] (node_16) ..controls (254.0bp,131.44bp) and (254.0bp,112.5bp)  .. (node_13);
  \draw (262.5bp,115.0bp) node {$1$};
\end{tikzpicture}
\]
\caption{The first four levels of $B(4\rho)$ of type $A_2$.}
\label{fig:A2_4rho}
\end{figure}
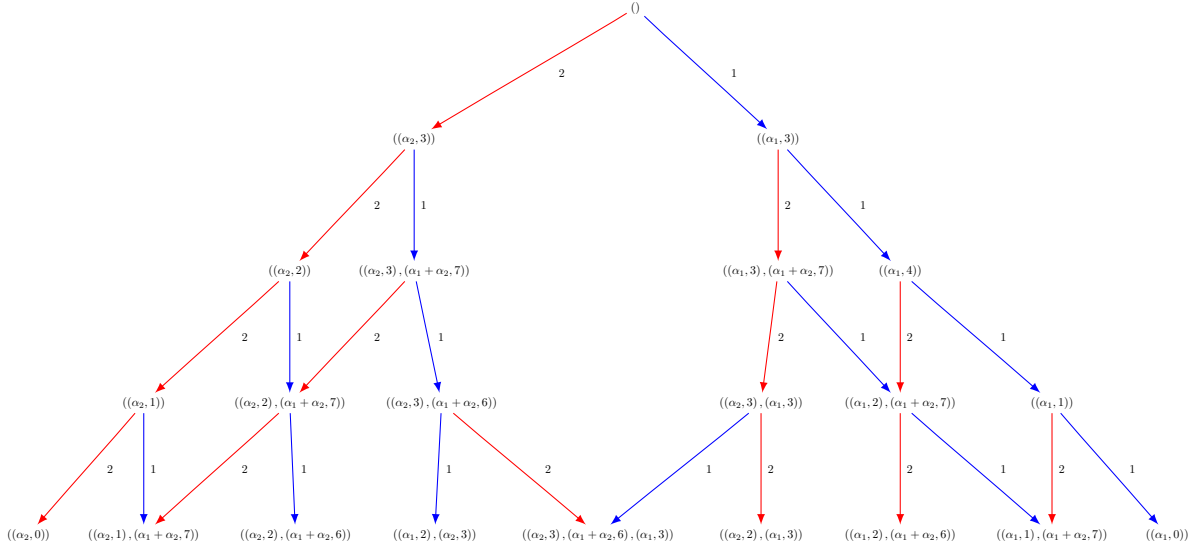

\begin{ex}
We consider the map $S^{\pr}_{4\rho} \colon \Al(\infty) \to T_{-4\rho} \otimes \Al(4\rho)$ in type $A_2$ given by
\begin{align*}
(\alpha_a, j) & \mapsto t_{-4\rho} \otimes (\alpha_a, j+4) \hspace{40pt} (a \in \{1, 2\}),
\\( \alpha_1 + \alpha_2, \ell) & \mapsto t_{-4\rho} \otimes (\alpha_1 + \alpha_2, \ell + 8).
\end{align*}
In particular, compare the elements of Figure~\ref{fig:A2_example} with the corresponding element in Figure~\ref{fig:A2_4rho}.
\end{ex}

\begin{lemma}
\label{lemma:shift_morphism_inf}
The maps $S^{\pr}_{k\rho}$ and $S^{\inc}_{-k\rho}$ are crystal surjections and embeddings, respectively.
\end{lemma}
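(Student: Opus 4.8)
The plan is to treat both maps as a single uniform level shift $(\zeta,\ell)\mapsto(\zeta,\ell\pm k\inner{\rho}{\zeta^\vee})$, exactly as for the finite map $S_\mu$ in Lemma~\ref{lemma:shift_morphism}, and to check that this shift transports all the data the crystal structure is built from. The key structural input is that the lex chain satisfies $\Gamma_{k\rho}=\Gamma_\rho^k$ (as in the proof of Lemma~\ref{lemma:shift_morphism}); hence the terminal $k$ blocks of the $\infty$-chain of $\Gamma_\rho$ agree, as an ordered sequence of roots, with $\Gamma_{k\rho}$, the only difference being that the occurrence labels run through $\{-k\inner{\rho}{\zeta^\vee},\dotsc,-1\}$ rather than $\{0,\dotsc,k\inner{\rho}{\zeta^\vee}-1\}$. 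Thus the shift is an order-preserving bijection between the positions of $\Al(k\rho)$ and those of the last $k$ blocks of the $\infty$-chain, fixing each root $\zeta$.

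First I would extract the set-level statements. Since each reflection $r_j=s_{\zeta_j}$, and therefore $\iota(J)$ and $\tau(J)$, depends only on the $\zeta_j$ and their relative order, the admissibility condition~\eqref{eqn:bruhat_path} is preserved verbatim; in particular $S^{\inc}_{-k\rho}(t_{-k\rho}\otimes J)$ is always admissible, so $S^{\inc}_{-k\rho}$ really maps into $\Al(\infty)$, and shifting back gives $S^{\pr}_{k\rho}\circ S^{\inc}_{-k\rho}=\id$. This already makes $S^{\inc}_{-k\rho}$ injective and $S^{\pr}_{k\rho}$ surjective as maps of sets.

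Next I would match $\wt$, $\varepsilon_i$, and $\varphi_i$. Writing $J=S^{\inc}_{-k\rho}(t_{-k\rho}\otimes J')$ and comparing the affine reflections $\widehat{r}_j=s_{\zeta_j,-\ell_j}$ with $\widehat{r}'_j=s_{\zeta_j,-\ell_j-k\inner{\rho}{\zeta_j^\vee}}$, a short induction on $p$ (the case $p=1$ being immediate) yields $\wt(J)=-k\rho+\wt(J')=\wt(t_{-k\rho}\otimes J')$. For $\varepsilon_i$, recall it equals the number of unmatched $-$ in the reduced $i$-signature; the positions of the $\infty$-chain lying before the terminal $k$ blocks are unfolded, lie to the left of the support of any such $J$, and have $\gamma_\ind=\beta_\ind\in\Phi^+$, hence sign $+$. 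Since cancellation only pairs a $-$ with a later $+$, these far-left $+$ are inert, so $\varepsilon_i(J)=\varepsilon_i(J')=\varepsilon_i(t_{-k\rho}\otimes J')$, the last equality because $\varepsilon_i(t_{-k\rho})=-\infty$. As $\wt$ and $\varepsilon_i$ agree and Condition~(1) holds in both abstract crystals, $\varphi_i$ agrees automatically.

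Finally I would verify commutation with $e_i$ and $f_i$. Because $t_{-k\rho}$ has $\varepsilon_i=\varphi_i=-\infty$, the tensor rules always route the operators onto the $\Al(k\rho)$ factor, so it suffices to compare the operators on $\Al(k\rho)$ and $\Al(\infty)$; both use the same formulas of Equations~\eqref{eqn:e_op} and~\eqref{eqn:f_op} and depend only on the reduced $i$-signature and the sequence $(\gamma_\ind)$, which the shift preserves. The extra far-left $+$ of the $\infty$-chain are never the rightmost $+$ once the folded region contains one, so whenever both sides are nonzero the operators make the identical insertion or move. Hence both maps are crystal morphisms, and together with the set-level facts we conclude that $S^{\inc}_{-k\rho}$ is an embedding and $S^{\pr}_{k\rho}$ is a surjection. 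I expect the main obstacle to be precisely the bookkeeping forced by the infinite chain — confirming that the infinitely many unfolded positions outside the terminal $k$ blocks, all carrying positive roots, are inert for admissibility, for $\varepsilon_i$, and for the operators — whereas the weight identity, though the only genuine computation, is routine.
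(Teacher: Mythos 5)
Your proof is correct and takes essentially the same route as the paper: the paper's own proof is a one-line reference to Lemma~\ref{lemma:shift_morphism}, whose argument is precisely the observation you spell out --- that the lex chain satisfies $\Gamma_{k\rho}=\Gamma_{\rho}^{k}$, that this is the terminal segment of the $\infty$-chain up to a uniform level shift preserving roots and their order (hence admissibility, the sequences $\Gamma(J)$, and the $i$-signatures), and that the infinitely many unfolded positions further left contribute only inert $+$'s. Your write-up simply supplies the bookkeeping (the weight identity via $\widehat{r}'_j = t_{-k\rho}\,\widehat{r}_j\,t_{k\rho}$, and the check that the far-left $+$'s never interfere when both sides of a commutation identity are nonzero) that the paper leaves implicit.
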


\begin{proof}
This is similar argument the proof of Lemma~\ref{lemma:shift_morphism}.
\end{proof}

\begin{lemma}
\label{lemma:alcove_direct_limit}
The family $\{T_{-k\rho} \otimes \Al(k\rho)\}_{k=0}^{\infty}$ forms a directed system with inclusion maps $\psi_{k',k} = S_{(k' - k)\rho}$, for all $k' > k$.
Moreover, the map
\[
S \colon \varinjlim_{k \in \ZZ_{\geq 0}} T_{-k\rho} \otimes \Al(k\rho) \to \Al(\infty)
\]
given by $S^{\inc}_{-k\rho} \circ \psi^{(k)}$, where $\psi^{(k)} \colon \varinjlim_{k \in \ZZ_{\geq 0}} \Al(k\rho) \to T_{-k\rho} \otimes \Al(k\rho)$ is the natural restriction, is a crystal isomorphism.
\end{lemma}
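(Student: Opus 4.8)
The plan is to verify the three assertions in turn---that the family is a directed system, that $S$ is well defined on the direct limit, and that $S$ is a bijective crystal morphism---leaning throughout on the shift maps being crystal morphisms as recorded in Lemmas~\ref{lemma:shift_morphism} and~\ref{lemma:shift_morphism_inf}. Since every map in sight is a shift of the level coordinates $\ell_a$ by an amount of the form $\inner{c\rho}{\zeta_a^\vee}$, most of the bookkeeping is additive and routine; the one genuinely substantive point is surjectivity.

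For the directed system, set $\lambda = k\rho$ and $\mu = (k'-k)\rho$, so that $\mu+\lambda = (k'/k)\lambda$ with $k'/k \geq 1$ whenever $k' > k$. Lemma~\ref{lemma:shift_morphism} then shows $S_{(k'-k)\rho}$ is a crystal embedding, and extending it by the identity on the $T_{-k\rho}$ tensor factor (via $T_{-k\rho}\otimes T_{-(k'-k)\rho} = T_{-k'\rho}$) gives the embedding $\psi_{k',k} \colon T_{-k\rho}\otimes\Al(k\rho) \to T_{-k'\rho}\otimes\Al(k'\rho)$. The identities $\psi_{k,k} = \id$ and $\psi_{k'',k'}\circ\psi_{k',k} = \psi_{k'',k}$ follow at once from additivity of the level shifts, since $S_{c\rho}$ shifts each $\ell_a$ by $\inner{c\rho}{\zeta_a^\vee}$.

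For well-definedness of $S$, I would check the compatibility $S^{\inc}_{-k'\rho}\circ\psi_{k',k} = S^{\inc}_{-k\rho}$ by direct computation: $\psi_{k',k}$ raises each level by $(k'-k)\inner{\rho}{\zeta_a^\vee}$ and $S^{\inc}_{-k'\rho}$ then lowers it by $k'\inner{\rho}{\zeta_a^\vee}$, for a net change of $-k\inner{\rho}{\zeta_a^\vee}$, which is exactly the effect of $S^{\inc}_{-k\rho}$. Thus $S^{\inc}_{-k\rho}(x)$ depends only on the image of $x$ in the direct limit, so $S$ is well defined. That $S$ is a crystal morphism is then inherited from the fact that each $S^{\inc}_{-k\rho}$ is one (Lemma~\ref{lemma:shift_morphism_inf}) and that these agree under the transition maps. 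Injectivity follows because the transition maps $\psi_{k',k}$ are embeddings, so the canonical maps into the direct limit are injective: if $S(\vec b_1) = S(\vec b_2)$, represent both at a common level $k$ as $x_1, x_2 \in T_{-k\rho}\otimes\Al(k\rho)$, and then $S^{\inc}_{-k\rho}(x_1) = S^{\inc}_{-k\rho}(x_2)$ forces $x_1 = x_2$ by injectivity of $S^{\inc}_{-k\rho}$, hence $\vec b_1 = \vec b_2$.

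The hard part is surjectivity. Given $J = \{(\zeta_1,\ell_1) < \cdots < (\zeta_p,\ell_p)\} \in \Al(\infty)$, recall $\ell_a < 0$ for all $a$, and it suffices to produce a $k$ with $S^{\pr}_{k\rho}(J) \neq 0$, for then $S^{\pr}_{k\rho}(J) = t_{-k\rho}\otimes J'$ with $J' \in \Al(k\rho)$ and $S\bigl(\psi_{(k)}(t_{-k\rho}\otimes J')\bigr) = S^{\inc}_{-k\rho}(t_{-k\rho}\otimes J') = J$. Because $\inner{\rho}{\zeta_a^\vee} \geq 1$ and there are finitely many folding positions, for $k$ large every shifted level $\ell_a + k\inner{\rho}{\zeta_a^\vee}$ lands in the valid range $[0, \inner{k\rho}{\zeta_a^\vee})$. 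The remaining, and crux, step is to see that the shifted set is actually admissible: here one uses that $\Gamma_{k\rho} = \Gamma_\rho^{\,k}$, so that for $k$ large the folding positions of $J$ occupy the last $k$ blocks of the $\infty$-chain and are carried order-preservingly onto positions of the $k\rho$-chain. Since the Bruhat chain condition~\eqref{eqn:bruhat_path} depends only on the reflections $s_{\zeta_a}$ and on the relative order of the folding positions---neither of which is altered by shifting levels---admissibility is preserved. I expect establishing this invariance of the Bruhat cover relations under the shift to be the main obstacle, after which $S$ is a bijective crystal morphism, i.e.\ a crystal isomorphism.
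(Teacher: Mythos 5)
Your proposal is correct and follows essentially the same route as the paper: directed system via Lemma~\ref{lemma:shift_morphism} and additivity of the level shifts, well-definedness from Lemma~\ref{lemma:shift_morphism_inf}, and invertibility by observing that $S^{\pr}_{k\rho}(J)$ lands in $T_{-k\rho}\otimes\Al(k\rho)$ once $k \geq \max_{j\in J}(-\ell_j)$, which is finite. The admissibility-preservation point you flag as the main obstacle is resolved exactly as you sketch it --- the Bruhat condition~\eqref{eqn:bruhat_path} depends only on the reflections $r_{j_a}$ and their relative order, neither of which the shift changes --- and this is how the paper disposes of it (implicitly, in the discussion of $S_\mu$ and the proof of Lemma~\ref{lemma:shift_morphism}).
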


\begin{proof}
First, $\{T_{-k\rho} \otimes \Al(k\rho)\}_{k=0}^{\infty}$ is a directed system by Lemma~\ref{lemma:shift_morphism} and clearly $S_{k\rho} \circ S_{k'\rho} = S_{(k+k')\rho}$.
Next, we note that Lemma~\ref{lemma:shift_morphism_inf} implies that $S$ is well-defined.
For all $J \in \Al(\infty)$, we have $S^{\pr}_{k\rho}(J) \in T_{-k\rho} \otimes \Al(k\rho)$ for all $k \geq \max_{j \in J} -\ell_j$, which is well-defined since $J$ is a finite set.
Hence, $S$ is invertible, and the claim follows.
\end{proof}

We note that $S$ does not give an equality between the direct limit and $\Al(\infty)$ as the direct limit is an quotient of alcove paths that \emph{start} at the fundamental alcove and alcove paths in $\Al(\infty)$ do not have a well-defined starting point.

\begin{thm}
\label{thm:infinite_isomorphism}
Let $\g$ be of symmetrizable type. Then we have
\[
\Al(\infty) \iso B(\infty).
\]
\end{thm}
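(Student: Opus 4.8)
The plan is to realize $\Al(\infty)$ as a direct limit that is already isomorphic to $B(\infty)$, by stitching together the finite-rank isomorphisms $\Al(k\rho) \iso B(k\rho)$ of Theorem~\ref{thm:isomorphic_highest_weight} along the two directed systems. Concretely, I would establish the chain
\[
\Al(\infty) \iso \varinjlim_{k} T_{-k\rho} \otimes \Al(k\rho) \iso \varinjlim_{k} T_{-k\rho} \otimes B(k\rho) \iso B(\infty),
\]
where the first isomorphism is Lemma~\ref{lemma:alcove_direct_limit}, the last is Theorem~\ref{thm:direct_limit_construction} after a cofinality reduction, and the middle one is obtained by promoting the pointwise isomorphisms $\Phi_k \colon \Al(k\rho) \to B(k\rho)$ to an isomorphism of directed systems.

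The main step is the middle isomorphism, which amounts to checking that the square
\[
(\id \otimes \Phi_{k'}) \circ S_{(k'-k)\rho} = \psi_{k'\rho, k\rho} \circ (\id \otimes \Phi_k)
\]
commutes for all $k' \geq k$, where $\Phi_k$ is the (unique) crystal isomorphism from Theorem~\ref{thm:isomorphic_highest_weight}. I would prove this by a rigidity argument. Both $T_{-k\rho}\otimes \Al(k\rho)$ and the target $T_{-k'\rho}\otimes B(k'\rho)$ are regular crystals, since tensoring with a $T_\lambda$ leaves the crystal graph and every $\varepsilon_i$ unchanged; moreover the domain is a connected highest weight crystal. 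Both composites above are crystal embeddings that commute with $e_i$ (the maps $S_\mu$ and the $\Phi$'s commute with all Kashiwara operators, while $\psi_{k'\rho,k\rho}$ commutes with $e_i$ by~\cite{K02}), and both send the highest weight element $t_{-k\rho}\otimes\emptyset$ to $t_{-k'\rho}\otimes u_{k'\rho}$. Propagating this equality down each $i$-string via $e_i$, using that every crystal morphism preserves $\varepsilon_i$ so that neither map truncates a string in the regular target, forces the two composites to agree everywhere. Hence the $\Phi_k$ assemble into an isomorphism of the two directed systems, and therefore of their colimits.

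For the final isomorphism, I would observe that $\{k\rho\}_{k\ge 0}$ is cofinal in $(P^+, \le)$: given $\lambda \in P^+$, choosing $k \ge \max_{i\in I}\inner{\lambda}{\alpha_i^{\vee}}$ yields $\inner{k\rho - \lambda}{\alpha_i^{\vee}} = k - \inner{\lambda}{\alpha_i^{\vee}} \ge 0$ for all $i$, since $\inner{\rho}{\alpha_i^{\vee}} = 1$. A colimit over a directed poset is unchanged upon restriction to a cofinal subset, so $\varinjlim_k T_{-k\rho}\otimes B(k\rho) \iso \varinjlim_{\lambda \in P^+} T_{-\lambda}\otimes B(\lambda) = B(\infty)$ by Theorem~\ref{thm:direct_limit_construction}. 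Chaining the three isomorphisms then gives $\Al(\infty)\iso B(\infty)$.

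The hard part is the rigidity step establishing the middle isomorphism; everything else is formal once it is in place. The subtlety worth flagging is that Kashiwara's embeddings $\psi_{k'\rho,k\rho}$ commute only with $e_i$, their images not being closed under $f_i$, so the comparison cannot be run by lowering from the highest weight. One must instead raise via $e_i$ and crucially invoke regularity of the target, so that preservation of $\varepsilon_i$ guarantees each $i$-string stays intact under both maps.
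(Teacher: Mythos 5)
Your proposal is correct and follows essentially the same route as the paper: identify $\Al(\infty)$ with $\varinjlim_k T_{-k\rho}\otimes\Al(k\rho)$ (the paper's Lemma~\ref{lemma:alcove_direct_limit}), transport across the levelwise isomorphisms $\Phi_k$ of Theorem~\ref{thm:isomorphic_highest_weight}, and invoke Kashiwara's direct limit description of $B(\infty)$. You usefully make explicit two points the paper leaves implicit --- the commuting square $(\id\otimes\Phi_{k'})\circ S_{(k'-k)\rho}=\psi_{k'\rho,k\rho}\circ(\id\otimes\Phi_k)$ (which the paper dispatches via the no-nontrivial-automorphisms footnote) and the cofinality of $\{k\rho\}$ in $P^+$; the only quibble is that $T_{-k'\rho}\otimes B(k'\rho)$ is upper but not lower regular ($\varphi_i$ is shifted), though your argument only needs $\varepsilon_i$-preservation and the partial-inverse axiom, so this does not affect correctness.
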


\begin{proof}
We will define a map $\Psi \colon \Al(\infty) \to B(\infty)$ as follows.
Fix some $b \in B(\infty)$.
Recall the natural projection $p_{\lambda} \colon B(\infty) \to T_{-\lambda} \otimes B(\lambda)$ and inclusion $i_{\lambda} \colon T_{-\lambda} \otimes B(\lambda) \to B(\infty)$ maps from Section~\ref{sec:crystal_bg}.
Let $k$ be such that $p_{k\rho}(b) \neq 0$.
From Theorem~\ref{thm:isomorphic_highest_weight}, we have a (canonical\footnote{Recall that $B(\lambda)$ and $B(\infty)$ admit no non-trivial automorphisms.}) isomorphism $\Phi \colon \Al(k\rho) \to B(k\rho)$.
Thus, we define $\Psi(b)$ by the composition
\[
\Al(\infty) \xrightarrow[\hspace{25pt}]{S^{\pr}_{-k\rho}} T_{-k\rho} \otimes \Al(k\rho) \xrightarrow[\hspace{25pt}]{\Phi} T_{-k\rho} \otimes B(k\rho) \xrightarrow[\hspace{25pt}]{i_{k\rho}} B(\infty).
\]
Note that Lemma~\ref{lemma:shift_morphism_inf} and Lemma~\ref{lemma:alcove_direct_limit} states that this is independent of the choice of $k$ and well-defined.
Additionally, the (local) inverse of $\Psi$ is given by the composition
\[
B(\infty) \xrightarrow[\hspace{25pt}]{p_{k\rho}} T_{-k\rho} \otimes B(k\rho) \xrightarrow[\hspace{25pt}]{\Phi^{-1}} T_{-k\rho} \otimes \Al(k\rho) \xrightarrow[\hspace{25pt}]{S^{\inc}_{k\rho}} \Al(\infty).
\]
Therefore, the map $\Psi$ is an isomorphism as desired.
\end{proof}

Our construction, geometrically speaking, is to extend the alcove walk in the anti-dominant chamber to infinity, but to shift the origin so that it is at the end of the path.
Note that this differs from the construction of $\Al^{\vee}(\lambda)$ in Section~\ref{sec:alcove_reversed}, where the direction of the path is also reversed.


\begin{remark}
If $A_{ij} A_{ji} < 4$ for all $i \neq j \in I$ (\textit{i.e.}, the restriction to any rank 2 Levi subalgebra is of finite type), then we could use the Yang--Baxter moves of~\cite{Lenart07} to construct the directed system $\{T_{-\lambda} \otimes \Al(\lambda)\}_{\lambda \in P^+}$. However, it would be interesting to construct this for general symmetrizable types as it could allow one to determine the subset of $\Al(\infty)$ that corresponds to $\Al(\lambda)$ and generalize the model for any $\infty$-chain of the lex $\rho$-chain.
\end{remark}

We also have the following for the dual alcove path model.

\begin{cor}
Let $\g$ be a symmetrizable Kac--Moody algebra. Then we have
\[
\Al^{\vee}(\infty) \iso B(\infty)^{\vee}.
\]
\end{cor}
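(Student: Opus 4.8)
The plan is to obtain this as a formal consequence of the two results already established for the non-dual model, together with the functoriality of the contragredient dual construction. First I would invoke Proposition~\ref{prop:reversing_infinite_alcoves}, which supplies a crystal isomorphism $\Al^{\vee}(\infty) \iso \Al(\infty)^{\vee}$ realized by the identity on folding positions $J \mapsto J$. This reduces the problem to establishing $\Al(\infty)^{\vee} \iso B(\infty)^{\vee}$.

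For the latter, the key observation is that taking contragredient duals carries crystal isomorphisms to crystal isomorphisms. Concretely, starting from the isomorphism $\Phi \colon \Al(\infty) \iso B(\infty)$ provided by Theorem~\ref{thm:infinite_isomorphism}, I would define $\Phi^{\vee} \colon \Al(\infty)^{\vee} \to B(\infty)^{\vee}$ by $\Phi^{\vee}(b^{\vee}) := \bigl(\Phi(b)\bigr)^{\vee}$. Since the dual crystal structure recalled in Section~\ref{sec:crystal_bg} is defined entirely in terms of the original structure with the roles of $e_i$ and $f_i$ interchanged, likewise $\varepsilon_i$ and $\varphi_i$ interchanged, and $\wt$ negated, and since $\Phi$ already intertwines $e_i$, $f_i$, $\varepsilon_i$, $\varphi_i$, and $\wt$, the induced map $\Phi^{\vee}$ intertwines exactly the corresponding dual operations. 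Hence $\Phi^{\vee}$ is a crystal isomorphism, giving $\Al(\infty)^{\vee} \iso B(\infty)^{\vee}$.

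Finally I would compose the two isomorphisms to conclude $\Al^{\vee}(\infty) \iso \Al(\infty)^{\vee} \iso B(\infty)^{\vee}$, as claimed. I do not expect any genuine obstacle here: the only point requiring (routine) verification is the functoriality of $(-)^{\vee}$ on crystal morphisms, which is immediate from the definition of the contragredient dual crystal. The symmetrizable Kac--Moody hypothesis enters solely as the standing hypothesis of Theorem~\ref{thm:infinite_isomorphism} and Proposition~\ref{prop:reversing_infinite_alcoves}, so no additional assumptions are needed.
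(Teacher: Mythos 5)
Your proposal is correct and follows exactly the paper's route: the paper's proof is a one-line citation of Theorem~\ref{thm:infinite_isomorphism} and Proposition~\ref{prop:reversing_infinite_alcoves}, and your argument simply makes explicit the (routine) functoriality of $(-)^{\vee}$ that the paper leaves implicit. No issues.
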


\begin{proof}
This follows from Theorem~\ref{thm:infinite_isomorphism} and Proposition~\ref{prop:reversing_infinite_alcoves}.
\end{proof}

\section{Continuous limit of infinite alcove walks}
\label{sec:continuous_limit infinite}

We will show that we can extend the dual crystal isomorphism $\varpi_{\lambda} \colon \Al(\lambda) \to \Pi(\lambda)^{\vee}$ to a dual crystal isomorphism $\varpi_{\infty} \colon \Al(\infty) \to \Pi(\infty)^{\vee}$.
We first need to construct a model $\Pi^{\vee}(\infty)$ using somewhat different paths such that $\Pi^{\vee}(\infty) \iso \Pi(\infty)^{\vee}$.

From Theorem~\ref{thm:direct_limit_construction} and the tensor product rule, for any sequence $(a_j \in I)_{j=1}^N$, there exists a $K$ such that
\[
f_{a_1} \cdots f_{a_N} u_{\infty} \mapsto t_{-k\rho} \otimes u_{\infty} \otimes (f_{a_1} \cdots f_{a_N} u_{k\rho}) \in T_{-k\rho} \otimes B(\infty) \otimes B(k\rho)
\]
for all $k > K$.
In terms of the Littelmann path model, there is some $k$ such that
\[
f_{a_1} \cdots f_{a_N} \pi_{\infty} = (f_{a_1} \cdots f_{a_N} \pi_{k\rho}) \ast \pi_{\infty}.
\]

Define $\Pi^{\vee}(\infty)$ be the set of paths (up to $\sim$) $\xi \colon (-\infty, 0] \to \hh_{\RR}^{\ast}$ in the closure of $\xi_{\infty}(t) = t\rho$ under the crystal operators given in Section~\ref{sec:littelmann_path} except with $m_{i,\pi} = \max\{ H_{i,\pi}(t) \mid t \in (-\infty, 0] \}$, interchanging $e_i$ and $f_i$, and $\wt(\xi) = -\xi(0)$.
We can also make this construction geometrically by considering the paths as in the one-point compactification of $\hh_{\RR}$ and performing the usual path reversal and shifting the endpoint.
Indeed, $\Pi^{\vee}(\infty)$ is a subset of all paths $\xi \colon (-\infty, 0] \to \hh_{\RR}$ such that there exists a $T$ where $\xi'(t) = \rho$ for all $t \leq T$.
However, unlike for paths with finite length and $\Pi(\infty)$, we have $\xi(0) = 0$ if and only if $\xi = \xi_{\infty}$.
We also have the following analog of Proposition~\ref{prop:dual_littelmann_path}.

\begin{prop}
\label{prop:dual_path_isomorphism}
We have $\Pi^{\vee}(\infty) \iso \Pi(\infty)^{\vee}$, where the dual crystal isomorphism is given by
\[
\xi^{\vee}(t) = \xi(-t) - \xi(0).
\]
\end{prop}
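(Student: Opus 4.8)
The plan is to follow the template of Proposition~\ref{prop:dual_littelmann_path}, treating $\xi \mapsto \xi^{\vee}$ as the semi-infinite analogue of the contragredient dual path $\pi^{\vee}(t) = \pi(1-t) - \pi(1)$. First I would record the one computation that drives everything: pairing the defining formula $\xi^{\vee}(t) = \xi(-t) - \xi(0)$ with $\alpha_i^{\vee}$ yields the pointwise identity
\[
H_{i,\xi^{\vee}}(t) = H_{i,\xi}(-t) - H_{i,\xi}(0)
\]
on the relevant domain. Together with the fact that the reflection $s_i$ is linear (so it commutes with the affine reversal $t \mapsto -t$ followed by translation by $-\xi(0)$), this identity is what converts the data of $\xi$ into the data of $\xi^{\vee}$. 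I would also check that $\xi \mapsto \xi^{\vee}$ is a bijection by exhibiting its inverse as the same reversal in the other direction, noting that the direction-$\rho$ tail of $\xi$ at $-\infty$ becomes the direction-$(-\rho)$ tail of $\xi^{\vee}$, so that $\xi^{\vee}$ is the intended representative of an element of $\Pi(\infty)^{\vee}$.

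Next I would reduce to the finite-length setting, which is where the known results do the work. Using the decomposition recorded just before the statement---every element of $\Pi^{\vee}(\infty)$ is a left ray in direction $\rho$ concatenated with a finite Littelmann path, and dually for $\Pi(\infty)$---the reversal $\xi \mapsto \xi^{\vee}$ acts as the finite contragredient dual on the finite piece and merely flips the direction of the ray. Because any fixed $\xi$ and any fixed finite product of crystal operators applied to it are all captured inside a single sufficiently long truncation (by the reduction to $T_{-k\rho} \otimes B(\infty) \otimes B(k\rho)$ for $k \gg 1$ displayed above), the finite identity $(f_i \pi)^{\vee} = e_i(\pi^{\vee})$ together with the finite weight, $\varepsilon_i$, and $\varphi_i$ relations of Proposition~\ref{prop:dual_littelmann_path} transfers to the finite part. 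This gives the operator intertwining $e_i(\xi^{\vee}) = (f_i \xi)^{\vee}$ and $f_i(\xi^{\vee}) = (e_i \xi)^{\vee}$, recalling that the operators on $\Pi^{\vee}(\infty)$ are by definition the operators of Section~\ref{sec:littelmann_path} with $e_i$ and $f_i$ interchanged and $m_{i,\bullet}$ taken as a maximum.

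For the statistics I would feed the $H_i$-identity into the definitions. The maximum of $H_{i,\xi}$ over $(-\infty, 0]$ used to define $\Pi^{\vee}(\infty)$ matches, after the substitution $t \mapsto -t$ and the shift by $-H_{i,\xi}(0)$, the minimum of $H_{i,\xi^{\vee}}$ over $[0,\infty)$; the cutting points $\overline{t}_0, \overline{t}_1$ governing $f_i$ are interchanged with $t_0, t_1$ governing $e_i$; and $\wt(\xi) = -\xi(0)$ is carried to $-\wt(\xi^{\vee})$ once the $-T\rho$ correction in the modified weight of $\Pi(\infty)$ is accounted for. These yield $\wt(\xi^{\vee}) = -\wt(\xi)$, $\varepsilon_i(\xi^{\vee}) = \varphi_i(\xi)$, and $\varphi_i(\xi^{\vee}) = \varepsilon_i(\xi)$, completing the five conditions defining a dual crystal isomorphism and hence, via Theorem~\ref{thm:infinty_LS_paths}, the isomorphism $\Pi^{\vee}(\infty) \iso \Pi(\infty)^{\vee}$.

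I expect the main obstacle to be the careful handling of the semi-infinite tail together with the modified definitions of $\wt$ and $\varphi_i$ (which involve the cutoff $T$ and the subtraction of $T\rho$): one must verify that the relevant extremum is always attained on the finite part of the path, that the direction reversal from a $+\rho$ left-tail to a $-\rho$ right-tail is tracked consistently so that $\xi^{\vee}$ represents the correct element of $\Pi(\infty)^{\vee}$, and that the interchange of the never-vanishing conditions for $e_i$ and $f_i$ is faithfully mirrored. The operator and weight computations themselves are routine once the reduction to the finite case and the $H_i$-identity are in place.
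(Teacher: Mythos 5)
Your proposal is correct and follows essentially the same route as the paper, whose entire proof is the one-line observation that the claim ``follows immediately from the definition of $e_i$ and $f_i$ and that $\xi_{\infty}^{\vee} = \pi_{\infty}$'' --- i.e., the crystal structure on $\Pi^{\vee}(\infty)$ was defined precisely by dualizing the Littelmann operators under the reversal map, and the generators correspond. Your additional work (the $H_{i}$-identity, the reduction to a finite path concatenated with a ray, and the appeal to Proposition~\ref{prop:dual_littelmann_path}) is simply a careful spelling-out of what the paper leaves implicit, and your tracking of the $\rho$-tail reversal is, if anything, more precise than the paper's own statement.
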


\begin{proof}
This follows immediately from the definition of $e_i$ and $f_i$ and that $\xi_{\infty}^{\vee} = \pi_{\infty}$.
\end{proof}

\begin{remark}
The set $\Pi^{\vee}(\infty)$ should not be considered as $\Pi(-\infty) = \varinjlim_{k \in \ZZ_{\geq 0}} \Pi(-k\rho)$ as the latter consists of paths $\pi \colon [0, \infty) \to \hh^*_{\RR}$ and must start at the origin.
Additionally, note that $\Pi(-\infty)$ is isomorphic to $\Pi(\infty)^{\vee}$ by Proposition~\ref{prop:dual_littelmann_path} applied to the direct limit (or by restricting to $[0, T)$, where $T$ is minimal such that $\pi'(t) = \rho$ for all $t > T$ and then appending $\pi_{-\infty}(t) = -\rho t$).
However, in order to obtain the continuous limit of $\Al(\infty)$, we require $\Pi^{\vee}(\infty)$ as we do not have a (fixed) starting point for alcove walks in $\Al(\infty)$.
\end{remark}

Therefore, we define our desired dual crystal isomorphism $\varpi_{\infty}$ as the following composition
\[
\begin{array}{ccccccc}
\Al(\infty) & \to & T_{-k\rho} \otimes \Al(\infty) \otimes \Al(k\rho) & \to & \Pi(-k\rho) \otimes \Pi^{\vee}(\infty) \otimes T_{k\rho} & \to & \Pi^{\vee}(\infty)
\\ J & \mapsto & t_{-k\rho} \otimes \emptyset \otimes S_{k\rho}(J) & \mapsto & \varpi_{-k\rho}\bigl(S_{k\rho}(J)\bigr) \otimes \xi_{\infty} \otimes t_{k\rho}& \mapsto & \xi_{\infty} \ast \varpi_{-k\rho}\bigl(S_{k\rho}(J)\bigr)
\end{array}
\]
for some $k \gg 1$ depending on the element $J$.
Hence, by Theorem~\ref{thm:alcove_to_path} we have the following.

\begin{thm}
\label{thm:infinity_alcove_to_dual_path}
Let $\g$ be of symmetrizable type. Then the map
\[
\varpi_{\infty} \colon \Al(\infty) \to \Pi^{\vee}(\infty)
\]
defined above is a dual crystal isomorphism. Moreover, the dual crystal isomorphism is given explicitly by the same description as $\varpi_{\lambda}$ given in Section~\ref{sec:continuous_limit}.
\end{thm}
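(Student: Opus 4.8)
The plan is to verify that each of the three arrows in the displayed composition is a morphism of the appropriate (possibly dual) type, to check that the composite is independent of the auxiliary parameter $k$ once $k$ is large enough, and finally to read off the explicit formula. The essential inputs are already in hand: Theorem~\ref{thm:alcove_to_path} gives that the finite-level map $\varpi_{k\rho}\colon\Al(k\rho)\to\Pi(-k\rho)$ is a dual crystal isomorphism; Lemma~\ref{lemma:shift_morphism_inf} and Lemma~\ref{lemma:alcove_direct_limit} control the shift maps and the direct-limit structure on $\{T_{-k\rho}\otimes\Al(k\rho)\}$; and Proposition~\ref{prop:dual_path_isomorphism} identifies $\Pi^{\vee}(\infty)$ with $\Pi(\infty)^{\vee}$.

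For the first arrow I would observe that $J\mapsto t_{-k\rho}\otimes\emptyset\otimes S_{k\rho}(J)$, where $S_{k\rho}(J)$ denotes the $\Al(k\rho)$-component of $S^{\pr}_{k\rho}(J)$, is the shift embedding of Lemma~\ref{lemma:shift_morphism_inf} followed by insertion of the highest weight element $\emptyset\in\Al(\infty)$ in the middle tensor slot. Since $\wt(\emptyset)=0$ and $\varepsilon_i(\emptyset)=\varphi_i(\emptyset)=0$ for all $i$, the tensor product rule shows this insertion does not alter how $e_i$ and $f_i$ act, so the first arrow is a crystal embedding. The second arrow applies the finite-level map of Theorem~\ref{thm:alcove_to_path} to the $\Al(k\rho)$-factor, the highest-weight assignment $\emptyset\mapsto\xi_{\infty}$ to the $\Al(\infty)$-factor, and $t_{-k\rho}\mapsto t_{k\rho}$ to the $T$-factor, while reversing the order of the three factors; this order reversal is exactly the contragredient behavior $(A\otimes B)^{\vee}\iso B^{\vee}\otimes A^{\vee}$, so together with Theorem~\ref{thm:alcove_to_path} this arrow is a dual crystal morphism. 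The third arrow is the concatenation $\eta\otimes\xi_{\infty}\otimes t_{k\rho}\mapsto\xi_{\infty}\ast\eta$, which realizes the tensor product in the path model (the factor $t_{k\rho}$ only shifts the weight, cf. the definition of $\wt$ on $\Pi^{\vee}(\infty)$). Composing, I obtain that $\varpi_{\infty}$ interchanges $e_i$ with $f_i$ and $\varepsilon_i$ with $\varphi_i$ and negates $\wt$, that is, it is a dual crystal morphism.

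To see that $\varpi_{\infty}$ is well defined and independent of $k$, I would invoke the stabilization displayed just before the theorem: for any finite product $f_{a_1}\cdots f_{a_N}u_{\infty}$ there is a $K$ with $f_{a_1}\cdots f_{a_N}\pi_{\infty}=(f_{a_1}\cdots f_{a_N}\pi_{k\rho})\ast\pi_{\infty}$ for all $k>K$, which on the alcove side is precisely the compatibility of the shifts $S_{(k'-k)\rho}$ with an $\xi_{\infty}$-tail provided by Lemma~\ref{lemma:alcove_direct_limit}. Thus increasing $k$ only lengthens the stable ray and leaves $\xi_{\infty}\ast\varpi_{k\rho}(S_{k\rho}(J))$ unchanged. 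Bijectivity then follows because each $\varpi_{k\rho}$ is a bijection and both $\Al(\infty)$ and $\Pi^{\vee}(\infty)$ are the associated direct limits (Lemma~\ref{lemma:alcove_direct_limit}, Theorem~\ref{thm:infinite_isomorphism}, and Proposition~\ref{prop:dual_path_isomorphism}); alternatively, since $\Al(\infty)\iso B(\infty)$ admits no nontrivial automorphisms, any dual crystal morphism onto $\Pi^{\vee}(\infty)\iso B(\infty)^{\vee}$ is automatically an isomorphism.

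Finally, for the explicit description I would substitute the shifted data $(\zeta_j,\ell_j+\inner{\rho}{\zeta_j^{\vee}}k)$ with $\lambda=k\rho$ into Equation~\eqref{eq:explicit_littelmann_path}; the breakpoints become $t_j=1+\ell_j/(k\inner{\rho}{\zeta_j^{\vee}})$, so all bending of the finite path occurs in a window near $t=1$, and after the path reversal of Proposition~\ref{prop:dual_path_isomorphism} and attachment of the ray $\xi_{\infty}$ the result is governed by the very same piecewise-linear formula, now read on $(-\infty,0]$. I expect the main obstacle to be the simultaneous control of $k$ and the crystal operators: one must choose $k$ uniformly large enough that not only $J$ but also $e_iJ$ and $f_iJ$ project nontrivially under $S^{\pr}_{k\rho}$, so that all the dual-intertwining relations can be checked at a single finite level where Theorem~\ref{thm:alcove_to_path} already supplies them. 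Verifying that such a uniform $k$ exists, and that the tensor product rule genuinely localizes $e_i$ and $f_i$ to the finite factor, is the technical heart of the argument.
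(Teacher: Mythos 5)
Your proposal follows the same route as the paper: the paper defines $\varpi_{\infty}$ as exactly this three-arrow composition and deduces the theorem directly from Theorem~\ref{thm:alcove_to_path}, with independence of $k$ supplied by the stabilization statement preceding the theorem and by Lemma~\ref{lemma:alcove_direct_limit}. Your more detailed verification of each arrow, of the uniform choice of $k$, and of the explicit piecewise-linear formula is consistent with, and simply fills in, what the paper leaves implicit.
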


See Figure~\ref{fig:al_geometry_path} for an example.
We can also directly describe an isomorphism $\Al(\infty) \iso \Pi(\infty)$ by combining the results of Theorem~\ref{thm:infinity_alcove_to_dual_path} and Proposition~\ref{prop:dual_path_isomorphism}.
Furthermore, we have a dual version of Theorem~\ref{thm:infinity_alcove_to_dual_path}.

\begin{thm}
\label{thm:dual_infinity_alcove_to_path}
Let $\g$ be of symmetrizable type. Then the map
\[
\begin{array}{ccccccc}
\Al^{\vee}(\infty) & \to & T_{-k\rho} \otimes \Al^{\vee}(\infty) \otimes \Al^{\vee}(k\rho) & \to & T_{-k\rho} \otimes \Pi(\infty) \otimes \Pi(k\rho) & \to & \Pi(\infty),
\\ J & \mapsto & t_{-k\rho} \otimes \emptyset \otimes S_{k\rho}(J) & \mapsto & t_{-k\rho} \otimes \pi_{\infty} \otimes \varpi_{k\rho}^{\vee}\bigl(S_{k\rho}(J)\bigr) & \mapsto & \varpi_{k\rho}^{\vee}\bigl(S_{k\rho}(J)\bigr) \ast \pi_{\infty},
\end{array}
\]
where $k \gg 1$ depends on the element $J$, is a dual crystal isomorphism.
\end{thm}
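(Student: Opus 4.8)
The plan is to transcribe the proof of Theorem~\ref{thm:infinity_alcove_to_dual_path} into the dual model, replacing the finite dual crystal isomorphism $\varpi_{k\rho}\colon \Al(k\rho)\to\Pi(-k\rho)$ of Theorem~\ref{thm:alcove_to_path} by $\varpi^{\vee}_{k\rho}\colon \Al^{\vee}(k\rho)\to\Pi(k\rho)$ of Corollary~\ref{cor:dual_dual_iso}. Note that here the target is the \emph{ordinary} path crystal $\Pi(\infty)$ rather than $\Pi^{\vee}(\infty)$, since the reversal already built into $\Al^{\vee}$ makes the concatenation $\varpi^{\vee}_{k\rho}(S_{k\rho}(J))\ast\pi_{\infty}$ a genuine extended path eventually directed along $\rho$. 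The one structural ingredient needed is a direct-limit realization of $\Al^{\vee}(\infty)$. Because Propositions~\ref{prop:reversing_alcoves} and~\ref{prop:reversing_infinite_alcoves} identify $\Al^{\vee}(\Gamma)\iso\Al(\Gamma)^{\vee}$ and $\Al^{\vee}(\infty)\iso\Al(\infty)^{\vee}$ by the identity $J\mapsto J$ on folding positions, and the shift maps $S_{k\rho}$, $S^{\pr}_{k\rho}$, $S^{\inc}_{-k\rho}$ alter only the heights $\ell_j$, the dual-model analogues of Lemmas~\ref{lemma:shift_morphism}, \ref{lemma:shift_morphism_inf}, and~\ref{lemma:alcove_direct_limit} hold verbatim, giving $\Al^{\vee}(\infty)\iso\varinjlim_{k} T_{-k\rho}\otimes\Al^{\vee}(k\rho)$ with the $S_{k\rho}$ as connecting maps.

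I would first dispose of well-definedness. Fixing $J\in\Al^{\vee}(\infty)$, which is a finite set of folding positions, there is a threshold so that for all $k\gg1$ each of $S_{k\rho}(J)$, $S_{k\rho}(e_i J)$, and $S_{k\rho}(f_i J)$ (for every $i$) is dual admissible in $\Al^{\vee}(k\rho)$; this is precisely the mechanism used in Theorem~\ref{thm:infinite_isomorphism}, and the coherence $S_{(k'-k)\rho}\circ S_{k\rho}=S_{k'\rho}$ shows the output is independent of such $k$. I would then check the three arrows in turn, calling the composition $\Psi$. The first, $J\mapsto t_{-k\rho}\otimes\emptyset\otimes S_{k\rho}(J)$, is a crystal embedding for $k\gg1$: the factor $T_{-k\rho}$ is inert ($\varepsilon_i=\varphi_i=-\infty$), and since $\emptyset$ is the lowest weight element with $\varepsilon_i(\emptyset)=\varphi_i(\emptyset)=0$, the tensor product rule routes every $e_i$ and $f_i$ to the rightmost factor (or returns $0$ exactly when $\varphi_i\bigl(S_{k\rho}(J)\bigr)=0$ forces $f_i\emptyset$), where it commutes with $S_{k\rho}$. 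The second arrow applies $\varpi^{\vee}_{k\rho}$ on the right factor and the canonical identification $\emptyset\leftrightarrow\pi_{\infty}$ of the distinguished elements on the middle factor; the weight shift $T_{-k\rho}$ is exactly what absorbs the sign reversal $\wt\mapsto-\wt$ produced by the dual isomorphism. The third arrow is the tensor-to-concatenation isomorphism $\Pi(\infty)\otimes\Pi(k\rho)\iso\{\,\xi\ast\pi\,\}$ of Section~\ref{sec:littelmann_path}, which discards the inert $T_{-k\rho}$ and outputs $\varpi^{\vee}_{k\rho}(S_{k\rho}(J))\ast\pi_{\infty}\in\Pi(\infty)$.

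To conclude that $\Psi$ is a dual crystal isomorphism, I would trace a single operator. For $k$ large enough to capture $J$ and $e_iJ$, we have $\varpi^{\vee}_{k\rho}\bigl(S_{k\rho}(e_iJ)\bigr)=\varpi^{\vee}_{k\rho}\bigl(e_i S_{k\rho}(J)\bigr)=f_i\,\varpi^{\vee}_{k\rho}\bigl(S_{k\rho}(J)\bigr)$, the last step by the dual crystal isomorphism property of Corollary~\ref{cor:dual_dual_iso}; one checks the routing is consistent, namely that $e_i$ lands on the right factor on the source side exactly when $f_i$ lands on the right factor of $\Pi(\infty)\otimes\Pi(k\rho)$ on the target side, using $\varepsilon_i(\pi_{\infty})=0$ and $\varphi_i\bigl(\varpi^{\vee}_{k\rho}(S_{k\rho}(J))\bigr)=\varepsilon_i\bigl(S_{k\rho}(J)\bigr)$. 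Hence $\Psi(e_iJ)=f_i\Psi(J)$, and symmetrically $\Psi(f_iJ)=e_i\Psi(J)$, together with $\wt\Psi(J)=-\wt(J)$ and the interchange of $\varepsilon_i,\varphi_i$; since each arrow is a bijection onto its image and the family is compatible with the direct limit, $\Psi$ is a dual crystal isomorphism. As a cross-check on existence, the same conclusion follows abstractly by dualizing Theorem~\ref{thm:infinity_alcove_to_dual_path} through Propositions~\ref{prop:reversing_infinite_alcoves} and~\ref{prop:dual_path_isomorphism}.

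I expect the main obstacle to be the uniform choice of $k\gg1$ together with the sign bookkeeping: one must guarantee that $J$ and all of its crystal neighbours are simultaneously captured in $\Al^{\vee}(k\rho)$, and then verify that the $e_i\leftrightarrow f_i$ interchange, the weight reversal, and the $T_{-k\rho}$ shift remain mutually consistent as they pass through the tensor product rule. This is the verification left implicit for Theorem~\ref{thm:infinity_alcove_to_dual_path}, and the dual signs make it more delicate; a direct comparison of $\varepsilon_i$ and $\varphi_i$ across each arrow, as in~\cite[Sec.~7--9]{LP08}, settles it.
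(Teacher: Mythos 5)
Your proposal is correct and follows essentially the same route as the paper: the paper's proof is a one-line reduction to the argument for Theorem~\ref{thm:infinity_alcove_to_dual_path} with $\varpi_{k\rho}$ replaced by the dual isomorphism coming from Proposition~\ref{prop:dual_littelmann_path} and Theorem~\ref{thm:alcove_to_path} (i.e.\ Corollary~\ref{cor:dual_dual_iso}), and it also records your ``cross-check'' as the alternative proof by dualizing $\varpi_{\infty}$ stepwise. You simply spell out the routing and well-definedness details that the paper leaves implicit.
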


\begin{proof}
The proof is similar to Theorem~\ref{thm:infinity_alcove_to_dual_path}, but using Proposition~\ref{prop:dual_littelmann_path} in conjunction with Theorem~\ref{thm:alcove_to_path}.
\end{proof}

See Figure~\ref{fig:al_dual_geometry_path} for an example.
Alternatively this follows from taking the contragredient dual at each step of $\varpi_{\infty}$.

\begin{figure}[t]
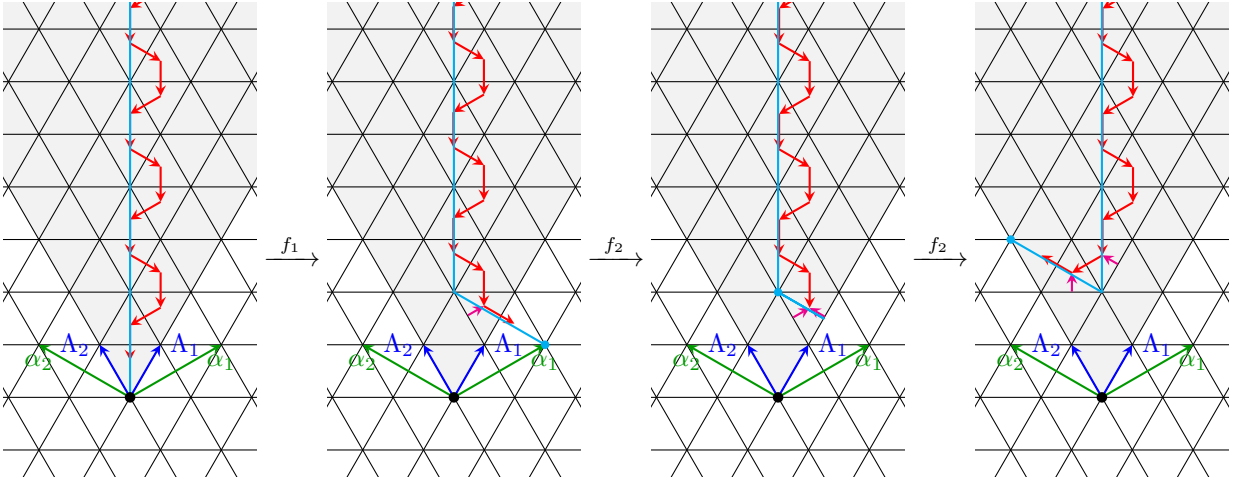

\[
\alcoveex{
  \path[thick, red,<-]
  (0,-1.3) edge +(90:\len)
  ++(90:\len) edge +(30:\len)
  ++(30:\len) edge +(90:\len)
  ++(90:\len) edge +(150:\len)
  ++(150:\len) edge +(90:\len) ++(90:\len) edge +(30:\len) ++(30:\len) edge +(90:\len) ++(90:\len) edge +(150:\len)
  ++(150:\len) edge +(90:\len) ++(90:\len) edge +(30:\len) ++(30:\len) edge +(90:\len) ++(90:\len) edge +(150:\len)
  ++(150:\len) edge +(90:\len) ++(90:\len) edge +(30:\len) ++(30:\len) edge +(90:\len) ++(90:\len) edge +(150:\len)
  ++(150:\len) edge +(90:\len) ++(90:\len) edge +(30:\len) ++(30:\len) edge +(90:\len) ++(90:\len) edge +(150:\len);
  \draw[line cap=round,thick,cyan] (0,10) -- (0,-2);
  \fill[cyan] (0,-2) circle (.08);
}
\xrightarrow[\hspace{15pt}]{f_1}
\alcoveex{
  \path[thick, red,<-]
  (1.15,-0.6) edge +(150:\len)
  ++(150:\len) edge[color=magenta] +(210:\len/2+0.015)
   edge +(90:\len)
  ++(90:\len) edge +(150:\len)
  ++(150:\len) edge +(90:\len) ++(90:\len) edge +(30:\len) ++(30:\len) edge +(90:\len) ++(90:\len) edge +(150:\len)
  ++(150:\len) edge +(90:\len) ++(90:\len) edge +(30:\len) ++(30:\len) edge +(90:\len) ++(90:\len) edge +(150:\len)
  ++(150:\len) edge +(90:\len) ++(90:\len) edge +(30:\len) ++(30:\len) edge +(90:\len) ++(90:\len) edge +(150:\len)
  ++(150:\len) edge +(90:\len) ++(90:\len) edge +(30:\len) ++(30:\len) edge +(90:\len) ++(90:\len) edge +(150:\len);
  \draw[line cap=round,join=round,thick,cyan] (0,10) -- (0,0) -- +(-30:2);
  \fill[cyan] (-30:2) circle (.08);
}
\xrightarrow[\hspace{15pt}]{f_2}
\alcoveex{
  \path[thick, red,<-]
  (0.6,-0.3) edge[color=magenta] +(-30:\len/2)
  edge[color=magenta] +(210:\len/2+0.025)
   edge +(90:\len)
  ++(90:\len) edge +(150:\len)
  ++(150:\len) edge +(90:\len) ++(90:\len) edge +(30:\len) ++(30:\len) edge +(90:\len) ++(90:\len) edge +(150:\len)
  ++(150:\len) edge +(90:\len) ++(90:\len) edge +(30:\len) ++(30:\len) edge +(90:\len) ++(90:\len) edge +(150:\len)
  ++(150:\len) edge +(90:\len) ++(90:\len) edge +(30:\len) ++(30:\len) edge +(90:\len) ++(90:\len) edge +(150:\len)
  ++(150:\len) edge +(90:\len) ++(90:\len) edge +(30:\len) ++(30:\len) edge +(90:\len) ++(90:\len) edge +(150:\len);
  \draw[line cap=round,join=round,thick,cyan] (0,10) -- (0,0) -- +(-30:1) -- (0,0);
  \fill[cyan] (0,0) circle (.08);
}
\xrightarrow[\hspace{15pt}]{f_2}
\alcoveex{
  \path[thick, red,<-]
  (-1.15,0.7) edge +(-30:\len)
  ++(-30:\len) edge[color=magenta] +(-90:\len/2+.02)
   edge +(30:\len)
  ++(30:\len) edge[color=magenta] +(-30:\len/2)
   edge +(90:\len)
  ++(90:\len) edge +(30:\len)
  ++(30:\len) edge +(90:\len)
  ++(90:\len) edge +(150:\len)
  ++(150:\len) edge +(90:\len) ++(90:\len) edge +(30:\len) ++(30:\len) edge +(90:\len) ++(90:\len) edge +(150:\len)
  ++(150:\len) edge +(90:\len) ++(90:\len) edge +(30:\len) ++(30:\len) edge +(90:\len) ++(90:\len) edge +(150:\len)
  ++(150:\len) edge +(90:\len) ++(90:\len) edge +(30:\len) ++(30:\len) edge +(90:\len) ++(90:\len) edge +(150:\len);
  \draw[line cap=round,join=round,thick,cyan] (0,10) -- (0,0) -- +(150:2);
  \fill[cyan] (150:2) circle (.08);
}
\]
\caption{The action of a few crystal operators on $\Al(\infty)$ with the corresponding path in $\Pi^{\vee}(\infty)$ under $\varpi_{\infty}$ in type $A_2$ starting with $\emptyset$ and $\xi_{\infty}$ on the left.}
\label{fig:al_geometry_path}
\end{figure}

\begin{figure}[t]
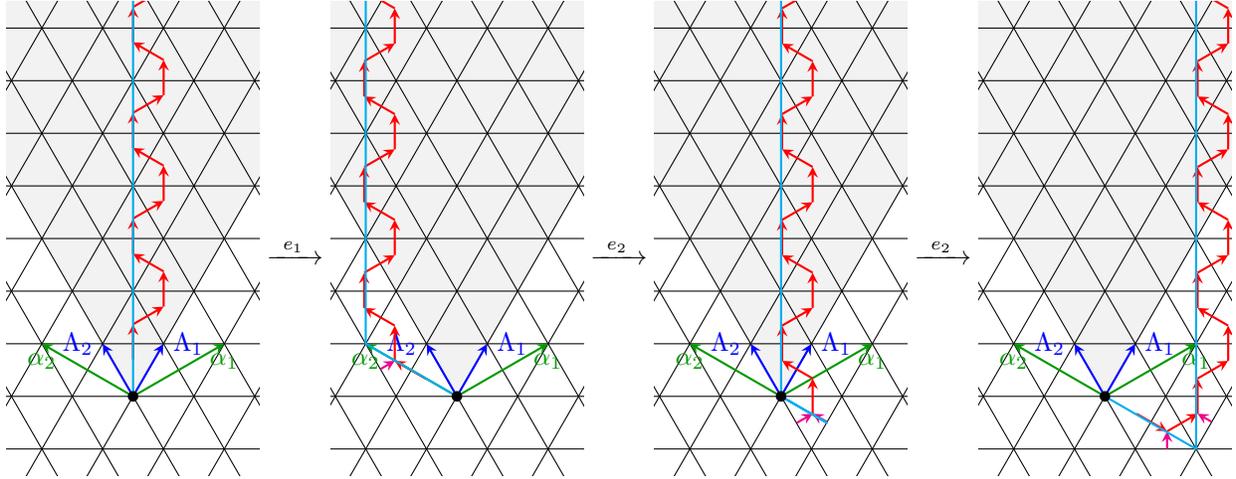

\[
\alcoveex{
  \path[thick, red,->]
  (0,-1.3) edge +(90:\len)
  ++(90:\len) edge +(30:\len)
  ++(30:\len) edge +(90:\len)
  ++(90:\len) edge +(150:\len)
  ++(150:\len) edge +(90:\len) ++(90:\len) edge +(30:\len) ++(30:\len) edge +(90:\len) ++(90:\len) edge +(150:\len)
  ++(150:\len) edge +(90:\len) ++(90:\len) edge +(30:\len) ++(30:\len) edge +(90:\len) ++(90:\len) edge +(150:\len)
  ++(150:\len) edge +(90:\len) ++(90:\len) edge +(30:\len) ++(30:\len) edge +(90:\len) ++(90:\len) edge +(150:\len)
  ++(150:\len) edge +(90:\len) ++(90:\len) edge +(30:\len) ++(30:\len) edge +(90:\len) ++(90:\len) edge +(150:\len);
  \draw[line cap=round,thick,cyan] (0,-2) -- (0,10);
}
\xrightarrow[\hspace{15pt}]{e_1}
\alcoveex{
  \path[thick, red,->]
  (-0.6,-1.66) edge +(150:\len)
  ++(150:\len) edge[color=magenta,<-] +(210:\len/2-0.025)
   edge +(90:\len)
  ++(90:\len) edge +(150:\len)
  ++(150:\len) edge +(90:\len) ++(90:\len) edge +(30:\len) ++(30:\len) edge +(90:\len) ++(90:\len) edge +(150:\len)
  ++(150:\len) edge +(90:\len) ++(90:\len) edge +(30:\len) ++(30:\len) edge +(90:\len) ++(90:\len) edge +(150:\len)
  ++(150:\len) edge +(90:\len) ++(90:\len) edge +(30:\len) ++(30:\len) edge +(90:\len) ++(90:\len) edge +(150:\len)
  ++(150:\len) edge +(90:\len) ++(90:\len) edge +(30:\len) ++(30:\len) edge +(90:\len) ++(90:\len) edge +(150:\len);
  \draw[line cap=round,join=round,thick,cyan] (0,-2) -- ++(150:2) -- +(0,10);
}
\xrightarrow[\hspace{15pt}]{e_2}
\alcoveex{
  \path[thick, red,->]
  (0.6,-2.33) edge[color=magenta,<-] +(-30:\len/2-0.025)
   edge[color=magenta,<-] +(210:\len/2+0.01)
   edge +(90:\len)
  ++(90:\len) edge +(150:\len)
  ++(150:\len) edge +(90:\len) ++(90:\len) edge +(30:\len) ++(30:\len) edge +(90:\len) ++(90:\len) edge +(150:\len)
  ++(150:\len) edge +(90:\len) ++(90:\len) edge +(30:\len) ++(30:\len) edge +(90:\len) ++(90:\len) edge +(150:\len)
  ++(150:\len) edge +(90:\len) ++(90:\len) edge +(30:\len) ++(30:\len) edge +(90:\len) ++(90:\len) edge +(150:\len)
  ++(150:\len) edge +(90:\len) ++(90:\len) edge +(30:\len) ++(30:\len) edge +(90:\len) ++(90:\len) edge +(150:\len);
  \draw[line cap=round,join=round,thick,cyan] (0,-2) -- +(-30:1) -- (0,-2) -- (0,10);
}
\xrightarrow[\hspace{15pt}]{e_2}
\alcoveex{
  \path[thick, red,->]
  (0.6,-2.33) edge +(-30:\len)
  ++(-30:\len) edge[color=magenta,<-] +(-90:\len/2)
   edge +(30:\len)
  ++(30:\len) edge[color=magenta,<-] +(-30:\len/2-0.03)
   edge +(90:\len)
  ++(90:\len) edge +(30:\len) ++(30:\len) edge +(90:\len) ++(90:\len) edge +(150:\len)
  ++(150:\len) edge +(90:\len) ++(90:\len) edge +(30:\len) ++(30:\len) edge +(90:\len) ++(90:\len) edge +(150:\len)
  ++(150:\len) edge +(90:\len) ++(90:\len) edge +(30:\len) ++(30:\len) edge +(90:\len) ++(90:\len) edge +(150:\len)
  ++(150:\len) edge +(90:\len) ++(90:\len) edge +(30:\len) ++(30:\len) edge +(90:\len) ++(90:\len) edge +(150:\len);
  \draw[line cap=round,join=round,thick,cyan] (0,-2) -- ++(-30:2) -- +(0,10);
}
\]
\caption{The action of a few crystal operators on $\Al^{\vee}(\infty)$ with the corresponding path in $\Pi(\infty)$ under the map of Theorem~\ref{thm:infinity_alcove_to_dual_path} in type $A_2$ starting with $\emptyset$ and $\pi_{\infty}$ on the left.}
\label{fig:al_dual_geometry_path}
\end{figure}

\appendix
\section{Calculations using Sage}
\label{sec:sage}

The crystal $\Al(\lambda)$ (resp. $\Al(\infty)$) has been implemented by the first (resp. second) author in Sage~\cite{sage,combinat}.  We conclude with examples.

We construct $\Al(\infty)$ in type $A_3$ and compute the element $b = f_2 f_3 f_1 f_2 f_2 f_3 f_1 f_2 \emptyset$:
\begin{lstlisting}
sage: A = crystals.infinity.AlcovePaths(['A',3])
sage: mg = A.highest_weight_vector()
sage: b = mg.f_string([2,1,3,2,2,1,3,2])
sage: b
((alpha[2], -2), (alpha[2] + alpha[3], -2),
 (alpha[1] + alpha[2], -2), (alpha[1] + alpha[2] + alpha[3], -2))
sage: b.weight()
(-4, -4, 0, 0)
\end{lstlisting}
Next, we construct the projection onto $\Al(2\rho)$ by computing $S_{2\rho}^P(b)$:
\begin{lstlisting}
sage: b.projection()
((alpha[2], 0), (alpha[2] + alpha[3], 2),
 (alpha[1] + alpha[2], 2), (alpha[1] + alpha[2] + alpha[3], 4))
sage: b.to_highest_weight()
[(), [2, 1, 2, 1, 3, 2, 3, 2]]
\end{lstlisting}
Note that $\inner{(k_1 \alpha_1 + k_2 \alpha_2 + k_3 \alpha_3)^{\vee}}{\rho} = k_1 + k_2 + k_3$.
Therefore, compare the result to the corresponding elements in $\Al(3\rho)$ and $\Al(4\rho)$:
\begin{lstlisting}
sage: A = crystals.AlcovePaths(['A',3], [3,3,3])
sage: mg = A.highest_weight_vector()
sage: mg.f_string([2,1,3,2,2,1,3,2])
((alpha[2], 1), (alpha[2] + alpha[3], 4),
 (alpha[1] + alpha[2], 4), (alpha[1] + alpha[2] + alpha[3], 7))
sage: A = crystals.AlcovePaths(['A',3], [4,4,4])
sage: mg = A.highest_weight_vector()
sage: mg.f_string([2,1,3,2,2,1,3,2])
((alpha[2], 2), (alpha[2] + alpha[3], 6),
 (alpha[1] + alpha[2], 6), (alpha[1] + alpha[2] + alpha[3], 10))
\end{lstlisting}

\begin{ex}
\label{ex:concat_admissible}
We give an example showing that simply concatenating the folding positions in $\Al(\Lambda_1) \otimes \Al(\Lambda_1)$ is not equal to $\Al(2\Lambda_1)$ in type $A_3$ (even though they are isomorphic).

\begin{lstlisting}
sage: P = RootSystem(['A',3]).weight_lattice()
sage: La = P.fundamental_weights()
sage: C = crystals.AlcovePaths(2*La[1])
sage: D = crystals.AlcovePaths(La[1])
sage: C.vertices()
[[], [0], [3], [0, 1], [0, 4], [3, 4],
 [0, 1, 2], [0, 1, 5], [0, 4, 5], [3, 4, 5]]
sage: D.vertices()
[[], [0], [0, 1], [0, 1, 2]]
\end{lstlisting}
In particular, note that for the folding positions $\{0, 4, 5\} \in \Al(2\Lambda_1)$, if we consider this as a concatenation, then $\{1,2\} \notin \Al(\Lambda_1)$.
\end{ex}

\section{Alcove model: Crystal operators}
\label{sec:ame}
In this section we show that our description of crystal operators
in Section \ref{section:alcove_model} is equivalent to the one given in \cite{LP08}.

Let $J = \{ j_1 < j_2 < \cdots < j_p \}$.  Recall $\gamma_k$ from Equation~\eqref{eqn:defn_gamma} and the set $I_{\alpha_i}$ from Equation~\eqref{eqn:root_set_I}.  Let 
\[
I_{\alpha_i} = \{ i_1 < i_2 < \cdots < i_N \}\; \mbox{ and } \;
\widehat{I}_{\alpha_i} = I_{\alpha_i} \cup \{\infty\}.
\]
Let $\gamma_{\infty} = r_{j_1}r_{j_2}\cdots r_{j_p}(\rho)$, and define
\[
\varsigma_i := 
\begin{cases}
    1  & \mbox{ if } i \notin J,\\
    -1 & \mbox{ if } i \in J.
\end{cases}
\]
Crystal operators are defined in terms of the 
piecewise linear function $g_{\alpha_i} \colon [0, N+\frac{1}{2}] \to
\RR$ given by
\begin{equation*}
g_{\alpha_i}(0) = -\frac{1}{2},
\qquad\qquad
\frac{dg_{\alpha_i}}{dx}(x) = \begin{cases}
\sgn(\gamma_{i_j}) & \text{if } x \in (j-1, j-\frac{1}{2}),\ j = 1, \dotsc, N,\\
\varsigma_{i_j} \sgn(\gamma_{i_j}) & \text{if } x \in (j-\frac{1}{2}, j),\ j = 1, \dotsc, N, \\
\sgn(\langle\gamma_{\infty}, \alpha_i^{\vee}\rangle) & \text{if } x
\in (N, N+\frac{1}{2}).
\end{cases}
\end{equation*}
The graph $g_{\alpha_i}$ is used to define crystal operators in the
alcove model. Let  
\begin{align*}
\sigma_j &:= \bigl( \sgn(\gamma_{i_j}), \varsigma_{i_j}\sgn(\gamma_{i_j}) \bigr),
\\ \sigma_{N+1} & := \sgn\bigl(\langle\gamma_{\infty}, \alpha_i^{\vee}\rangle \bigr),
\end{align*}
where $1 \leq j \leq N$.
We note the following two conditions from \cite{LP08}:
\begin{itemize}
\item[(C1)] $\sigma_j \in {(1,1), (1,-1), (-1,-1)}$ for $1 \le j \le N$,
\item[(C2)] $\sigma_j = (1,1)$ implies $\sigma_{j+1} \in \{(1,1), (1,-1), 1\}$.
\end{itemize}

In the language of Section~\ref{section:alcove_model}, we identify $(1,1)$ with the symbol $+$
and $(-1,-1)$ with the symbol $-$.  We identify $(1,-1)$ with the symbol
$\pm$ and note that if $\sigma_j = (1,-1)$, then $i_j \in J$. 
Finally identify $\sigma_{N+1} = 1$ with $+$ and $\sigma_{N+1} = -1$ 
with $-$.  Condition (C1) says that we can describe $g_{\alpha_i}$ as
a word in the alphabet $\{ +, -, \pm \}$.  Condition (C2) says that
the transition from $+$ to $-$ must pass through $\pm$.

We now recall the definition of $f_i$.  Let $M$ be the maximum of $g_{{\alpha}_i}$.  
Let $\h{i_j} = g_{\alpha_i}(j-\frac{1}{2})$ and $\h{\infty} = g_{\alpha_i}(N+\frac{1}{2})$.  Let $\mu$ be the minimum index in $\widehat{I}_{{\alpha}_i}$ for which we have $\h{\mu}=M$.  
Then $\mu\in J$ or $\mu = \infty$.
If $M>0$, then $\mu$ has a predecessor $k$ in
$I_{{\alpha}_i}$, with $k\not\in J$.  Define
\begin{equation*}
	\label{eqn:rootF} 
f_i J := 
	\begin{cases}
		(J \setminus \left\{ \mu \right\}) \cup \{ k \} & \text{if } M > 0,\\
				0 & \text{otherwise.}
	\end{cases}
\end{equation*}
We use the convention that $J\backslash \left\{ \infty \right\}= J \cup \left\{ \infty \right\} = J$. 
Observe that after canceling out $-+$ terms as in Section
\ref{section:alcove_model} the rightmost remaining $+$ corresponds to
$k$ and the $\pm$ ( $+$ if $\mu = \infty$) term immediately following corresponds to $\mu$.   
This follows from conditions (C1) and (C2).

We now recall the definition of $e_i$. If  $M >
\h{ \infty }$, let $k$ be the maximum index 
in $I_{{\alpha}_i}$ for which we have $\h{k} = M$,
then $k \in J$ and $k$ has a successor $\mu$ in $\widehat{I}_{{\alpha}_i}$ with $\mu
\not \in J$.
Define 
\begin{equation*}
	\label{eqn:rootE}
e_i J := 
	\begin{cases}
		(J \setminus \left\{ k \right\}) \cup \{ \mu \} & \text{if }
		M > \h{ \infty },\\ 
				0 & \text{otherwise.}
	\end{cases}
\end{equation*}
Here it is also the case by (C1) and (C2) that the left most $-$, which exists
if $M > h_{ \infty }^J$,  
corresponds to $\mu$ and the immediately preceding $\pm$ corresponds to $k$.



\section*{Acknowledgements}

The authors thank Cristian Lenart for many helpful discussions.
The authors thank Ben Salisbury for comments on an early draft of this manuscript.
This work benefited from computations, as well as created Figure~\ref{fig:A2_example} and Figure~\ref{fig:A2_4rho}, using {\sc SageMath}~\cite{combinat,sage}.
We thank the anonymous referee for comments and suggestions.

\bibliographystyle{alpha}
\bibliography{paths}{}
\end{document}